\newtheorem*{TheoremA}{Theorem A}
\newtheorem*{TheoremB}{Theorem B}
\newtheorem*{TheoremC}{Theorem C}
\newtheorem*{TheoremD}{Theorem D}
\newtheorem*{TheoremD'}{Theorem D'}
\newtheorem*{TheoremE}{Theorem E}
\newtheorem*{TheoremE'}{Theorem E'}
\newtheorem{theorem}{Theorem}[section]
\newtheorem*{main*}{Main Theorem}
\newtheorem{lemma}[theorem]{Lemma}
\newtheorem{proposition}[theorem]{Proposition}
\theoremstyle{definition}
\newtheorem{definition}[theorem]{Definition}
\newtheorem{remark}[theorem]{Remark}
\newcommand{\CC}{\mathbb{C}}
\def\M{{\mathcal M}}
\def\RR{{\mathbb R}}
\def\NN{{\mathbb N}}
\newcommand{\fg}{\mathfrak{g}}
\newcommand{\fh}{\mathfrak{h}}
\def\top{{\mathop{\hbox{\footnotesize \rm top}}}}
\def\a{\alpha}
\def\d{\delta}   %\def\C{\Gamma}
 \def\e{\epsilon}
\newcommand{\dist}{\mathrm{dist}}
\newcommand{\Ad}{\mathrm{Ad}}
\newcommand{\ggm}{G/\Gamma}
\title[Running heading with forty characters or less]
      {On the continuity of topological entropy of certain partially hyperbolic diffeomorphisms}
\author[first-name1 last-name1 and first-name2 last-name2]{Weisheng Wu}
\subjclass{}
 \keywords{}
\address{School of Mathematical Sciences, Xiamen University, Xiamen, 361005, P. R. China}
\email{wuweisheng@xmu.edu.cn}
\begin{document}

\maketitle
\markboth{Continuity of topological entropy}
{W. Wu}
\renewcommand{\sectionmark}[1]{}

\begin{abstract}
In this paper, we consider certain partially hyperbolic diffeomorphisms with center of arbitrary dimension and obtain continuity properties of the topological entropy under $C^1$ perturbations. The systems considered have subexponential growth in the center direction and uniform exponential growth along the unstable foliation. Our result applies to partially hyperbolic diffeomorphisms which are Lyapunov stable in the center direction. It applies to another important class of systems which do have subexponential growth in the center direction, for which we develop a technique to use exponential mixing property of the systems to get uniform distribution of unstable manifolds. A primary example is the translations on homogenous spaces which may have center of arbitrary dimension and of polynomial orbit growth.
\end{abstract}

\section{Introduction}
Entropy is one of the most important invariants of a dynamical system which expresses the amount of ``unpredictability'' in the system. One of the central subjects in dynamical systems is to investigate the robust properties of the system under perturbations. Specifically, we are addressing in this paper the problem of the continuity properties of the topological entropy of partially hyperbolic diffeomorphisms under $C^1$ perturbations.

The concept of partial hyperbolicity was proposed independently by Brin-Pesin and Pugh-Shub in 1970s, which is a natural generalization of uniform hyperbolicity. The structural stability of uniformly hyperbolic diffeomorphisms implies the topological entropy is locally constant near such systems in the $C^1$ topology. For systems beyond uniform hyperbolicity, we first briefly survey some existing results from which we can see the key ingredients in the continuity of topological entropy.

The regularity of diffeomorphisms and the existence of homoclinic tangences play important roles in the upper semicontinuity of the topological entropy:
\begin{enumerate}
  \item By \cite{Yomdin} (also \cite{Bu, New}), the topological entropy is upper semicontinuous on the space of $C^\infty$ diffeomorphisms.
  \item Misiurewicz \cite{Mis} constructed $C^r$ counterexamples with homoclinic tangencies for any $1\le r<\infty$, at which the topological entropy is not upper semicontinuous in the $C^r$ topology.
  \item Liao, Viana and Yang \cite{LVY} proved that the topological entropy is in fact upper semicontinuous on the space
 of $C^1$ diffeomorphisms away from tangencies. In particular, the topological entropy is upper semicontinuous on the space
 of $C^1$ partially hyperbolic diffeomorphisms with one-dimensional ($1$-D for short) center.
\end{enumerate}

The lower semicontinuity of the topological entropy is usually due to the existence of certain types of dynamical structure which are robust and carry entropy:
\begin{enumerate}
  \item It is well known that horseshoes carry positive entropy and they are persistent after small perturbations. Then by a classical result of Katok \cite{Ka} the topological entropy is lower semicontinuous on the space of $C^{1+\alpha}$ diffeomorphisms on surfaces.
  \item In \cite{HSX}, the authors considered a $C^1$ partially hyperbolic diffeomorphism $f$ whose stable and unstable foliations  stably carry some unique non-trivial homologies. Then if $f$ has $1$-D center, the topological entropy is constant in a small $C^1$ neighborhood of $f$; if $f$ has $2$-D center, the topological entropy is continuous at $f$ in the $C^\infty$ topology\footnote{By \cite{HSX}, under the assumption that the unstable foliation of $f$ stably carries a unique non-trivial homology, the volume growth along unstable foliation $\chi_u(f)$ is locally constant at $f$. $\chi_u(f)$ can also be understood as the unstable topological entropy of $f$ by \cite{HHW}.}.
  \item In \cite{SY}, the authors proved for $f$ which is any small $C^1$ perturbation of a time-one map of a hyperbolic flow, the local unstable manifolds have uniform rate of expansion, which implies the (unstable) topological entropy is lower semicontinuous at $f$ in the $C^1$ topology. Since center bundle is one dimensional, the topological entropy is continuous at $f$ in the $C^1$ topology.
\end{enumerate}
Please see also the papers \cite{RSY, CLPV, BCF} for recent progress.

Let us focus on the topological entropy of $C^1$ partially hyperbolic diffeomorphisms. As one can see from above, most known results are established for partially hyperbolic diffeomorphisms with $1$-D or $2$-D center. In \cite{SY}, the authors conjectured the topological entropy is continuous on the space of $C^1$ partially hyperbolic diffeomorphisms with $1$-D center. Indeed, the conjecture is true for all the known examples. In contrast, when center has dimension two, the topological entropy can fail to be continuous; such an example is constructed in \cite{HSX}. Then it is natural to investigate the topological entropy for more partially hyperbolic diffeomorphisms with higher dimensional center.

Partial hyperbolicity includes the hyperbolic part and the center part. In \cite{HHW}, the authors introduced the notion of unstable topological entropy, which is caused by the hyperbolic part of the system. Recently, the unstable metric entropy is studied in \cite{HHW} and \cite{Yang}, see also the survey \cite{Ta}. In this paper, we focus on $C^1$ partially hyperbolic diffeomorphisms with subexponential growth in the center direction. Then only the hyperbolic part contributes to the topological entropy, which can be verified using the theory of unstable entropy. We remark that Ledrappier-Young formula which holds only for $C^r, r>1$ diffeomorphisms, is not directly applied here.

On the other hand, we investigate the continuity of unstable topological entropy in the $C^1$ topology. The upper semicontinuity turns out to be a general result for all $C^1$ partially hyperbolic diffeomorphisms. The lower semicontinuity is more delicate, and we obtain it under the condition that the unstable manifolds have uniform exponential expansion, which has appeared in \cite{SY}.

As explained above, our main result in this paper states that the topological entropy function is lower semicontinuous at partially hyperbolic diffeomorphisms which have subexponential growth in the center direction and uniform exponential growth along the unstable foliation. This result applies to partially hyperbolic diffeomorphisms which is topologically transitive and Lyapunov stable in the center direction (hence to those with bounded expansion in the center direction). This class includes time-one maps of frame flows; semisimple translations on homogenous spaces; time-one map of geodesic flows in symmetric spaces of noncompact type, etc. An interesting phenomenon is that Lyapunov stable center together with topological transitivity implies uniform exponential growth along the unstable foliation.

A more difficult situation is when the system does have subexponential growth in center direction (for example, polynomial growth). Our technique is to establish effective density of the unstable foliation under the assumption that the system has an exponential mixing property. The fast equidistribution of the unstable foliation then eliminates the effect caused by subexponential growth in the center direction, which guarantees the uniform exponential growth along the unstable foliation. An important example would be the translations on homogeneous spaces. Let $G$ be a connected semisimple Lie group, $\Gamma$ a cocompact lattice in $G$, and denote by $T_g$ the map given by the left translation on $\ggm$ by $g\in G$. When $\Ad g$ is non-quasiunipotent, the system $(\ggm,g)$ is a primary example of partially hyperbolic diffeomorphisms (cf. Theorem D in \cite{PS}). The system may have polynomial growth in the center direction, and indeed it satisfies a version of exponential mixing property (``Condition (EM)'' as in \cite[Section 2.4.1]{KM}). It is also interesting to consider $C^1$ perturbations of $T_g$ as well as the time one map of a smooth time change of a homogenous flow. We do not know if such systems have certain exponential mixing property. At last, similar results are obtained for ergodic toral automorphisms, via effective density of their unstable foliations.

\subsection{Statement of main results}
In general, for a compact metric space $(X,d)$, a continuous map $T:X \rightarrow X$ and $\forall~n\in \NN$,
we define a new distance $d_{n}$ on $X$ by
\begin{eqnarray*}
d_{n}:X~\times~X&\rightarrow&\mathbb{R}\\
(x,y) &\mapsto& d_{n}(x,y)=\max_{0\leq j<n}d(T^j(x),T^j(y)),
\end{eqnarray*}
which measures the maximal distance between the two orbits of $x$ and $y$ up to time $n$.
The new metric space $(X,d_{n})$ is obviously homeomorphic to $(X,d)$.

For any $\epsilon > 0$, we say that a subset $A \subset X$ is an $(n,\epsilon)$-separated set,
if for any distinct points $x,y\in A$,
$d_{n}(x,y)> \epsilon$.
The compactness of $X$ implies that an $(n,\e)$-separated set is a finite set.
Let
$$N(T,n,\epsilon):=\max\{\# A\ |A\ \text{ is an} ~(n,\epsilon)\text{-separated set in\ }X\}$$
where $\# A$ denotes the cardinality of $A$.
The topological entropy of the map $T$ is defined by the following formula:
\begin{equation*}
h_{\top}(T):=\lim_{\epsilon \rightarrow 0^{+}}\limsup_{n\to \infty}\frac{1}{n}\log N(T,n,\epsilon).
\end{equation*}
Topological entropy can be defined equivalently by using $(n,\e)$-spanning sets or open covers of $X$.
See Chapter $7$ in \cite{W} or Chapter $3$ in \cite{KH} for comprehensive discussions on topological entropy.

Let $M$ be a $d$-dimensional smooth, closed Riemannian manifold and let $f: M \to M$ be a $C^1$ diffeomorphism. $f$ is called a \emph{partially hyperbolic diffeomorphism} (abbreviated as \emph{PHD}) if there exists a nontrivial $Df$-invariant splitting of the tangent bundle $TM= E_f^s \oplus E_f^c \oplus E_f^u$ into so-called stable, center, and unstable distributions, such that all unit vectors $v^{\sigma} \in E_f^\sigma(x)$ ($\sigma=s, c, u$) with $x\in M$ satisfy
\begin{equation*}
\|D_xfv^s\| < \|D_xfv^c\| < \|D_xfv^u\|,
\end{equation*}
and
\begin{equation*}
\|D_xf|_{E_f^s(x)}\| <1, \ \ \ \text{\ and\ \ \ \ } \|D_xf^{-1}|_{E_f^u(x)}\| <1,
\end{equation*}
for some suitable Riemannian metric on $M$. The stable distribution $E_f^s$ and unstable distribution $E_f^u$ are integrable: there exist so-called stable and unstable foliations $W_f^s$ and $W_f^u$ respectively such that $TW_f^s=E_f^s$ and $TW_f^u=E_f^u$. We say that $f$ is \emph{dynamically coherent} if there exist $f$-invariant center-stable and center-unstable foliations $W_f^{cs}$ and $W_f^{cu}$ tangent to $E_f^{cs}$ and $E_f^{cu}$ respectively; then there exists a center foliation $W_f^c$ tangent to $E_f^c$ by intersecting leaves of $W_f^{cs}$ and $W_f^{cu}$. See for example \cite{RRU} for the basic properties of PHDs. We mention that $C^r$ PHDs form an open subset of $C^r(M)$, the space of $C^r$ diffeomorphims of $M$ for any $1\leq r\leq \infty$.

We will recall the definitions of unstable entropy $h^u_{\top}(f)$ in Section 2. Our first result below is for general PHDs.
\begin{proposition}\label{unstableusc}
Let $f: M\to M$ be any $C^1$ partially hyperbolic diffeomorphism. Then
$$h^u_{\top}: C^1(M)\to \RR^+\cup\{0\}$$
is upper semicontinuous at $f$.
\end{proposition}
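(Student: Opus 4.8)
\emph{Proof plan.}
The idea is to realize $h^u_{\top}$, on a $C^1$-neighbourhood of $f$, as an infimum over $N\in\NN$ of functions of the diffeomorphism that are continuous at $f$ in the $C^1$ topology; an infimum of functions continuous at a point is upper semicontinuous there, which is exactly the claim of Proposition~\ref{unstableusc}. Fix a small $\delta>0$. For a $C^1$ PHD $g$ put $u=\dim E^u_g$ (locally constant in $g$) and
\[
 V_N(g):=\sup_{x\in M}\ \Vol_u\bigl(g^N(W^u_{g,\delta}(x))\bigr),
\]
the $u$-dimensional Riemannian volume of the $N$-th iterate of the local unstable disc of radius $\delta$ at $x$. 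By the material recalled in Section~2 (following \cite{HHW}), for every $C^1$ PHD $g$ one has $h^u_{\top}(g)=\chi_u(g)$, the exponential volume growth rate along the unstable foliation; moreover the sequence $\log V_N(g)$ is subadditive up to an additive constant $\log c$ that is uniform over $g$ in a fixed $C^1$-neighbourhood $\mathcal{U}_0$ of $f$ (a standard consequence of the bounded geometry of the unstable lamination, cf.\ \cite{HHW}), so by Fekete's lemma
\[
 h^u_{\top}(g)=\chi_u(g)=\inf_{N\ge1}\ \tfrac1N\bigl(\log V_N(g)+\log c\bigr),\qquad g\in\mathcal{U}_0 .
\]
Since $c$ is one fixed constant, independent of $N$, it therefore suffices to prove that, for each fixed $N$, the map $g\mapsto V_N(g)$ is continuous at $f$ in the $C^1$ topology: then on $\mathcal{U}_0$ the function $h^u_{\top}$ is an infimum over $N\in\NN$ of functions each continuous at $f$, hence upper semicontinuous at $f$.

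To prove the continuity of $V_N$ at $f$, recall that partial hyperbolicity is $C^1$-open and that the unstable lamination depends continuously on the diffeomorphism: one may choose embeddings $\varphi_{g,x}\colon B^u_\delta\to W^u_{g,\delta}(x)\subset M$ parametrizing the local unstable discs and depending continuously on $g$ in the $C^1$ topology, uniformly in $x\in M$ (see e.g.\ \cite{RRU}). Fixing $N$, from $g\to f$ in $C^1$ we get $g^N\to f^N$ in $C^1$, and hence the embeddings $g^N\circ\varphi_{g,x}\colon B^u_\delta\to M$ converge to $f^N\circ\varphi_{f,x}$ in the $C^1$ topology, uniformly in $x\in M$; therefore, by the area formula,
\[
 \Vol_u\bigl(g^N(W^u_{g,\delta}(x))\bigr)=\int_{B^u_\delta}\operatorname{Jac}\bigl(g^N\!\circ\varphi_{g,x}\bigr)(z)\,dz
 \ \longrightarrow\ \int_{B^u_\delta}\operatorname{Jac}\bigl(f^N\!\circ\varphi_{f,x}\bigr)(z)\,dz
 =\Vol_u\bigl(f^N(W^u_{f,\delta}(x))\bigr)
\]
as $g\to f$, uniformly in $x\in M$. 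Since $M$ is compact, taking suprema over $x$ preserves this convergence, so $V_N(g)\to V_N(f)$, completing the argument modulo the cited facts.

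I do not expect a genuine obstacle here, which is consistent with the introduction's assertion that upper semicontinuity of $h^u_{\top}$ holds for \emph{all} $C^1$ PHDs — in contrast with the full topological entropy, which can fail to be upper semicontinuous even in the $C^r$ topology (Misiurewicz \cite{Mis}). The load-bearing inputs are the identification $h^u_{\top}=\chi_u$ together with the infimum representation of $\chi_u$ (recalled in Section~2, its uniform near-subadditivity resting on the bounded geometry of unstable leaves), and the $C^1$-continuity of the unstable lamination; the one point requiring care is the uniformity in $x\in M$, which is supplied by the compactness of $M$.
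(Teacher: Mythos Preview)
Your argument is essentially correct but takes a different route from the paper. You pursue the geometric/volume-growth approach: identify $h^u_{\top}(g)=\chi_u(g)$, write $\chi_u(g)$ as an infimum of $\tfrac1N(\log V_N(g)+\log c)$ via near-subadditivity, and then check that each $V_N$ is $C^1$-continuous at $f$. This is in spirit the approach of \cite{RSY}, which the paper explicitly mentions as an alternative. The paper instead gives a measure-theoretic proof: it invokes the unstable variational principle $h^u_{\top}(g)=\max_{\mu\in\mathcal{M}_g(M)}h^u_\mu(g)$ (Theorem~\ref{vp}, from \cite{HHW}) together with Yang's upper semicontinuity of the unstable \emph{metric} entropy under joint $C^1$/weak$^*$ limits (Theorem~\ref{usc}, from \cite{Yang}). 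Given $f_n\to f$, one picks maximizing measures $\mu_n$ for $h^u_{\top}(f_n)$, passes to a weak$^*$ limit $\mu\in\mathcal{M}_f(M)$, and concludes $\limsup h^u_{\top}(f_n)=\limsup h^u_{\mu_n}(f_n)\le h^u_\mu(f)\le h^u_{\top}(f)$.

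What each buys: the paper's proof is shorter because it outsources the hard work to \cite{Yang}, but it relies on the full machinery of unstable metric entropy (measurable partitions subordinate to $W^u$, etc.). Your approach is more self-contained and geometric, needing only $h^u_{\top}=\chi_u$ from \cite{HHW} plus foliation stability. The one place you should be a little more careful is the near-subadditivity $\log V_{N+M}(g)\le \log V_N(g)+\log V_M(g)+\log c$ with $c$ \emph{uniform over $g\in\mathcal{U}_0$}: this requires covering $g^N(W^u_{g,\delta}(x))$ by $O(V_N(g))$ local unstable $\delta$-balls, and the implied constant depends on a uniform lower bound for the intrinsic volume of such balls and a Besicovitch-type multiplicity bound on the leaf. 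These do hold uniformly on a small $C^1$-neighbourhood of $f$ by the $C^1$-continuity of $E^u$ and of the leaf parametrizations, but it is worth saying so rather than citing \cite{HHW}, which does not state this uniformity in $g$. You should also note that the interchange $\sup_x\limsup_N=\limsup_N\sup_x$ needed to pass from the paper's definition of $\chi_u$ to your $V_N$ is justified precisely by this near-subadditivity and compactness of $M$.
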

Note that in a recent paper \cite{RSY}, the authors have shown that the function of unstable volume growth is upper semicontinuous in the $C^1$ topology using geometric arguments. And it is already proved in \cite{HHW} that unstable topological entropy and unstable volume growth coincide. Our proof of Proposition \ref{unstableusc} uses unstable metric entropy and its upper semicontinuity. Modulo Proposition \ref{unstableusc}, the main work in this paper is to study whether unstable topological entropy is lower semicontinuous.

We focus on a $C^1$ PHD $f:M\to M$ \emph{with subexponential growth in the center direction}, that is, for any $\e>0$, there exists a constant $C_\e>0$ such that $\|D_xf^n|_{E_f^c(x)}\|\leq C_\e e^{n\e}$ for any $n\in \mathbb{N}$ and any $x\in M$ (cf. \cite{BFH}). As for the unstable direction, we introduce the following terminology in order to simplify the presentation.

\begin{definition}\label{ueg}
We say that a $C^1$ PHD $f: M\to M$ has \emph{uniform exponential growth along the unstable foliation} if for any $\rho>0$ and $\d>0$ small enough, there exists $N(\rho,\d)\in \NN$ such that for any $x\in M$, $f^{N(\rho, \d)}W^u_f(x,\d)$ contains at least $e^{N(\rho, \d)(h-3\rho)}$ disjoint sets of the form $\{W^u_f(z_i,2\d): z_i\in M\}$, where $h=h_\top^u(f)$ and $W^u_f(x,\d)$ denotes the open ball centered at $x$ of radius $\d$ with respect to the metric $d^u$ inherited from the Riemannian structure on $W^u(x)$.
\end{definition}

Our main result in this paper is the following:
\begin{TheoremA}
Let $f: M\to M$ be a $C^1$ partially hyperbolic diffeomorphism.
\begin{enumerate}
  \item If $f$ has uniform exponential growth along the unstable foliation, then the unstable topological entropy function $h^u_{\top}: C^1(M)\to \RR^+\cup\{0\}$ is continuous at $f$.
  \item Assume further that $f$ has subexponential growth in the center direction, then the topological entropy function $h_{\top}: C^1(M)\to \RR^+\cup\{0\}$ is lower semicontinuous at $f$. If in addition $f$ is $C^\infty$, $h_{\top}: C^\infty(M)\to \RR^+\cup\{0\}$ is continuous at $f$.
\end{enumerate}
\end{TheoremA}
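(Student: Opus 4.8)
The plan is to prove (1) first and derive (2) from it. By Proposition~\ref{unstableusc} the function $h^u_{\top}$ is already upper semicontinuous at $f$, so for (1) it remains to prove the lower bound $\liminf_{g\to f}h^u_{\top}(g)\ge h^u_{\top}(f)=:h$. Fix $\rho>0$ and a small $\delta>0$, and let $N=N(\rho,\delta)$ be as in Definition~\ref{ueg}; the crucial feature is that $N$ is \emph{fixed}. I would first transfer the growth property from $f$ to nearby diffeomorphisms. Using the standard fact that for a PHD the unstable bundle, the local unstable manifolds $W^u_g(\cdot,r)$ of any fixed size $r$, and the induced leaf metrics $d^u$ all depend continuously on $g$ in the $C^1$ topology, and that $g^N$ is uniformly $C^1$-close to $f^N$, one gets that $g^N W^u_g(x,\delta)$ is $C^1$-close to $f^N W^u_f(x,\delta)$, uniformly in $x\in M$. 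Since $f^N W^u_f(x,\delta)$ contains $e^{N(h-3\rho)}$ pairwise disjoint balls $W^u_f(z_i,2\delta)$, it follows that there is $\eta=\eta(d_{C^1}(g,f))$, with $\eta\to 0$ as $g\to f$, such that for $g$ sufficiently $C^1$-close to $f$ the set $g^N W^u_g(x,\delta)$ contains, for every $x\in M$, at least $e^{N(h-3\rho)}$ pairwise disjoint balls $W^u_g(z_i',2\delta-\eta)$ with $2\delta-\eta\ge\delta$.

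The second step is an iteration carried out intrinsically for $g$. Because each $W^u_g(z_i',2\delta-\eta)$ contains $W^u_g(z_i',\delta)$, applying the previous step to every $z_i'$ and using that $g$ is a homeomorphism on unstable leaves preserving disjointness, an induction on $m$ shows that $g^{mN}W^u_g(x,\delta)$ contains $e^{mN(h-3\rho)}$ pairwise disjoint $\delta$-balls for every $m\in\NN$. By the uniform bounded geometry of unstable leaves there is a constant $v=v(\delta)>0$, uniform in $x\in M$ and in $g$ near $f$, bounding below the $d^u$-volume of any such $\delta$-ball; moreover $\Vol^u(g^n W^u_g(x,\delta))$ is nondecreasing in $n$ since $g$ uniformly expands along $E^u_g$. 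Hence $\Vol^u(g^n W^u_g(x,\delta))\ge e^{mN(h-3\rho)}\,v$ for all $n\ge mN$, so the unstable volume growth $\chi^u$ satisfies $\chi^u(g)\ge h-3\rho$. Since $\chi^u=h^u_{\top}$ by \cite{HHW}, we conclude $h^u_{\top}(g)\ge h-3\rho$ for all $g$ sufficiently $C^1$-close to $f$; letting $\rho\to 0$ gives $\liminf_{g\to f}h^u_{\top}(g)\ge h$, which together with Proposition~\ref{unstableusc} proves (1).

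For (2), the extra hypothesis is subexponential growth in the center. I would first reduce to (1) via the identity $h_{\top}(f)=h^u_{\top}(f)$. This holds for any $C^1$ PHD with subexponential center growth because the stable and center directions carry no topological entropy: by the variational principle for unstable topological entropy (\cite{HHW}) it suffices that $h_\mu(f)=h^u_\mu(f)$ for every $f$-invariant $\mu$, and since $E^{cs}_f=E^s_f\oplus E^c_f$ has no positive Lyapunov exponents (the stable bundle being uniformly contracted, the center having zero exponential growth), Ruelle-type inequalities along the center-stable foliation — available in class $C^1$, so that no Ledrappier--Young formula is needed — combined with the partial-entropy machinery of \cite{HHW} give the equality. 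Granting this, for every $g\in C^1(M)$ near $f$ one always has $h_{\top}(g)\ge h^u_{\top}(g)$ (again from the two variational principles and $h^u_\mu\le h_\mu$), so (1) yields $\liminf_{g\to f}h_{\top}(g)\ge\liminf_{g\to f}h^u_{\top}(g)=h^u_{\top}(f)=h_{\top}(f)$, i.e., lower semicontinuity of $h_{\top}$ at $f$. Finally, when $f$ is $C^\infty$ this lower semicontinuity persists in the finer $C^\infty$ topology, while upper semicontinuity of $h_{\top}$ on $C^\infty(M)$ holds at every $C^\infty$ diffeomorphism by Yomdin's theorem \cite{Yomdin}; together these give continuity of $h_{\top}$ at $f$ on $C^\infty(M)$.

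I expect the main obstacle to be the lower semicontinuity in (1). Its heart is twofold: (a) showing that the \emph{number} of disjoint unstable $2\delta$-balls produced at the fixed time $N$ is stable under $C^1$ perturbation, which rests on the $C^1$-continuity of local unstable manifolds and of their induced leaf metrics; and (b) bootstrapping the one-step estimate by iterating the property for $g$ itself rather than for $f$, which forces one to keep the scales compatible throughout the iteration (each freshly produced family of $2\delta$-balls must still contain $\delta$-balls on which to recurse) and to control the volume lower bound $v(\delta)$ uniformly over $g$ near $f$. The reduction $h_{\top}(f)=h^u_{\top}(f)$ in (2) is a second, more standard ingredient, but still requires care because we are confined to class $C^1$.
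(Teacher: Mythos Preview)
Your argument for part~(1) is essentially the paper's: first transfer the one-step growth at time $N=N(\rho,\delta)$ from $f$ to nearby $g$ using $C^1$-continuity of local unstable plaques (the paper's Proposition~\ref{ggrowth}), then iterate for $g$ itself. The only cosmetic difference is that you finish via volume growth and the identity $\chi_u=h^u_{\top}$, whereas the paper observes directly that the preimages of the centers $w_i$ form an $(lN,\delta)$ $u$-separated set in $W^u_g(x,\delta)$; both routes are fine.

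There is, however, a genuine weak point in your reduction for part~(2). You argue $h_{\top}(f)=h^u_{\top}(f)$ by claiming the \emph{measurewise} equality $h_\mu(f)=h^u_\mu(f)$ for every invariant $\mu$, justified by ``Ruelle-type inequalities along the center-stable foliation'' in class $C^1$. That identity is exactly what Ledrappier--Young gives in class $C^r$, $r>1$, and the paper explicitly flags (Remark after Proposition~\ref{unstable}) that it is \emph{not} asserted in $C^1$. No standard $C^1$ result produces the inequality $h_\mu(f)-h^u_\mu(f)\le \sum_{\lambda_i^c>0}m_i\lambda_i^c$ that you seem to be invoking; the partial-entropy machinery of \cite{HHW} does not supply it either. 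So as written, your proof of $h_{\top}(f)=h^u_{\top}(f)$ has a gap.

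The paper closes this gap differently: it uses Lemma~\ref{unstablevolume} (a $C^1$ inequality from \cite{HSX}) stating $h_\mu(f)\le \chi_u(f)+\sum_{\lambda_i^c(\mu)>0}m_i(\mu)\lambda_i^c(\mu)$ for every ergodic $\mu$. Under subexponential center growth all center exponents are nonpositive, so $h_\mu(f)\le \chi_u(f)=h^u_{\top}(f)$, and the classical variational principle gives $h_{\top}(f)\le h^u_{\top}(f)$. This bypasses the measurewise equality entirely. Once $h_{\top}(f)=h^u_{\top}(f)$ is established this way, the rest of your argument for (2) (including the appeal to Yomdin/Newhouse for $C^\infty$ upper semicontinuity) matches the paper.
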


We apply Theorem A to several important classes of PHDs.

A PHD is called \emph{Lyapunov stable in the center direction (or just Lyapunov stable)} if given
any $\e> 0$, one can find $\d> 0$ such that for any $C^1$-arc $\gamma$ tangent to $E^c$ with length
less than $\d$, the curves $f^n\gamma, n\in \mathbb{N}$ all have length less than $\e$ (cf. \cite{RRU0}). When $f$ and $f^{-1}$ are both Lyapunov stable, we say that $f$ is bi-Lyapunov stable.
It is known that if $f$ is Lyapunov stable then $E^c\oplus E^s$ integrates to a unique $f$-invariant foliation $W^{cs}$. In particular, if $f$ is bi-Lyapunov stable, then $f$ is dynamically coherent (cf. Theorem 7.5 in \cite{RRU0}).

A $C^1$ PHD $f:M\to M$ is said to have \emph{bounded expansion in the center direction} if there exists $K>1$ such that
$$\|Df^n(x)|_{E_f^c(x)}\|< K, \quad \forall n\in \mathbb{N}.$$
It is clear that bounded expansion implies both Lyapunov stability and subexponential growth in the center direction.

\begin{TheoremB}
Let $f: M \to M$ be a topologically transitive $C^1$ partially hyperbolic diffeomorphism which is Lyapunov stable in the center direction. Then $h^u_{\top}: C^1(M)\to \RR^+\cup\{0\}$ is continuous at $f$. If in addition $f$ has bounded expansion in the center direction, then the topological entropy function $h_{\top}: C^1(M)\to \RR^+\cup\{0\}$ is lower semicontinuous at $f$.
\end{TheoremB}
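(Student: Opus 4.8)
The plan is to obtain Theorem~B as a corollary of Theorem~A: it suffices to check, for a topologically transitive $C^1$ PHD $f$ that is Lyapunov stable in the center direction, the two hypotheses occurring in Theorem~A. Subexponential growth in the center direction, needed only for the second assertion, is automatic from bounded expansion in the center direction, since a uniform bound $\|Df^n(x)|_{E_f^c(x)}\|<K$ for all $n,x$ is in particular subexponential (take $C_\ep=K$ for every $\ep>0$). So the whole content of Theorem~B is concentrated in the following claim: \emph{a topologically transitive $C^1$ PHD that is Lyapunov stable in the center direction has uniform exponential growth along the unstable foliation in the sense of Definition~\ref{ueg}.} Granting it, Theorem~A(1) gives continuity of $h^u_{\top}$ at $f$, and then, adding bounded expansion, Theorem~A(2) gives lower semicontinuity of $h_{\top}$ at $f$.

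To prove the claim I would proceed in three steps. \textbf{Step 1 (uniform expansion of unstable plaques).} Partial hyperbolicity gives a uniform $\lambda>1$ with $\|D_xfv\|\ge\lambda\|v\|$ for every $x$ and every $v\in E_f^u(x)$, so a routine estimate along unstable leaves yields a scale $\delta_0>0$ with $f^nW^u_f(x,\delta)\supseteq W^u_f(f^nx,\min\{\lambda^n\delta,\delta_0\})$ for all $x$, all $n\ge0$ and all $\delta\le\delta_0$, while injectivity of $f$ keeps images of disjoint plaques disjoint. Thus the image of any $\delta$-plaque contains a fixed-scale $\delta_0$-plaque after a uniformly bounded number of iterates, which reduces matters to a uniform-in-$x$ lower bound on plaque growth at a fixed scale. \textbf{Step 2 (uniform lower bound, via transitivity and Lyapunov stability).} Put $h=h^u_{\top}(f)$ and recall from \cite{HHW} that $h$ coincides with the unstable volume growth rate; hence there are a point $q$ and times $m_k\to\infty$ with $\mathrm{Vol}^u(f^{m_k}W^u_f(q,\delta_0))\ge e^{m_k(h-\rho)}$. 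I would first transfer this growth to a point $p$ with dense forward orbit: when $f^\ell p$ is close to $q$, continuity of the (always existing) unstable foliation and of the center-stable foliation $W^{cs}$ (which exists by Lyapunov stability) places inside $f^\ell W^u_f(p,\delta_0)$ a small $W^{cs}$-holonomic copy of $W^u_f(q,\delta_0/2)$; and Lyapunov stability is precisely what guarantees that the center-stable displacement between these two nearby unstable plaques stays uniformly bounded along the \emph{entire} forward orbit (the stable component contracts, the center component does not grow), so that after $m_k$ further steps $f^{\ell+m_k}W^u_f(p,\delta_0)$ carries essentially the same growth as $f^{m_k}W^u_f(q,\delta_0)$. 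Transferring once more from $p$ to an arbitrary $x$, again using transitivity together with Step~1 so that the growing plaque $f^nW^u_f(x,\delta)$ eventually crosses a neighborhood of $p$, and using compactness of $M$ to make the waiting times uniform, I would obtain a single $N=N(\rho,\delta)$, valid for all $x\in M$, with $\mathrm{Vol}^u(f^NW^u_f(x,\delta/2))\ge e^{N(h-2\rho)}$.

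\textbf{Step 3 (packing).} By Step~1 the subplaque $f^NW^u_f(x,\delta/2)$ lies in the $2\delta$-interior of $f^NW^u_f(x,\delta)$ once $N$ is large, so a maximal $4\delta$-separated net $\{z_i\}$ inside it gives pairwise disjoint plaques $W^u_f(z_i,2\delta)\subseteq f^NW^u_f(x,\delta)$, and by maximality $\#\{z_i\}\ge \mathrm{Vol}^u(f^NW^u_f(x,\delta/2))/\sup_z\mathrm{Vol}^u(W^u_f(z,4\delta))\ge e^{N(h-3\rho)}$ for $N$ large enough to absorb the finite uniform constant. This is exactly Definition~\ref{ueg}, completing the claim and hence Theorem~B.

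The main obstacle is Step~2: producing a genuinely \emph{uniform} waiting time $N$ while staying in the $C^1$ category. Because the unstable and center-stable foliations of a $C^1$ PHD are only continuous, there is no absolute continuity or bounded distortion of holonomies, and no Ledrappier--Young formula, to compare the growth of two nearby plaques; Lyapunov stability is what replaces this, by pinning the center-stable drift along the whole forward orbit so that growth cannot leak off the unstable direction. I expect the cleanest implementation works not with Riemannian volume but with the combinatorial definition of unstable entropy (cardinalities of $(n,\ep)$-separated subsets of unstable plaques, as in \cite{HHW}), where Jacobians play no role, the successive transfers of growth from $q$ to $p$ to a general $x$ become explicit inclusions of plaques combined with the expansion of Step~1, and the uniformity is extracted by a Baire-category argument locating a single point that simultaneously has a dense forward orbit and an unstable plaque realizing the full rate $h$ along a sequence of times.
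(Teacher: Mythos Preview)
Your reduction to Theorem~A is exactly the paper's strategy: the content of Theorem~B is that a topologically transitive Lyapunov-stable $C^1$ PHD has uniform exponential growth along the unstable foliation (the paper's Proposition~\ref{ueg0}), after which Theorem~A applies directly. The paper's implementation, however, is shorter than your Steps~2--3 because it isolates one lemma that does all the work on uniformity: \emph{effective density} of the unstable foliation (Lemma~\ref{uniformtran}, quoted from \cite{CPZ}), which says that for every $\delta>0$ there is a single $n$ such that for \emph{all} $x,y$, some $f^kW^u_f(x,\delta)$ with $0\le k\le n$ meets $W^{cs}_f(y,\delta)$. With this in hand the two-stage transfer $q\to p\to x$ and the Baire-category search for a good transitive point are unnecessary: one fixes any $x_0$ whose plaque carries a large $(n_\rho,\epsilon)$ $u$-separated set $S$, uses effective density to bring a plaque of an arbitrary $x$ into $W^{cs}_f(x_0,\delta)$ within uniformly bounded time, pushes $S$ across by a single $cs$-holonomy, and then---this is where Lyapunov stability enters---observes that the $cs$-distance between corresponding points stays bounded for all forward time, so the disjoint $u$-balls around $f^{n_\rho}(S)$ remain disjoint after holonomy and one further uniform expansion step inflates them to radius $2\delta$.

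Your last paragraph already names the right correction to your own Steps~2--3: the volume comparison across a $cs$-holonomy is unavailable in $C^1$ (the holonomy is merely H\"older, with no Jacobian control), and replacing $\mathrm{Vol}^u$ by cardinalities of $(n,\epsilon)$ $u$-separated sets---precisely what the paper does---avoids this entirely. So your sketch becomes a complete proof once you (i) work with separated sets rather than volume from the start and (ii) replace the Baire detour by Lemma~\ref{uniformtran}; that lemma, not a Baire-category argument, is the device that converts transitivity plus Lyapunov stability into a waiting time uniform in $x$.
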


Our technique to prove Theorem B, particularly the uniform exponential growth of $f$ along the unstable foliation, is to use effective density of the unstable foliation of $f$. The technique applies to another important class of PHDs which we describe below.

Consider a volume preserving PHD $f:M\to M$, i.e., $f$ preserves a smooth probability measure $m$ on $M$. Denote by $m_x^u$ the volume induced by the Riemannian structure on $W^u(x)$. For any $r\in \NN\cup \{\infty\}$, let $C^r(M,\RR)$ denote the set of all the $C^r$ functions from $M$ to $\RR$ equipped with the $C^r$-norm $\|\cdot\|_r$.
We say $f$ has \emph{property (EM)}, if there exists $r\in \NN$, $C>0$, $\a>0$ and $\d_0>0$, such that for any $0<\d\leq \d_0$, any $\phi, \psi\in C^r(M,\RR)$,
\begin{equation}\label{e:mixing0}
\begin{aligned}
\left|\int_{W_f^u(x,\d)}\phi(f^n(p))\psi(p)dm_x^u(p)-\int\phi dm\int_{W_f^u(x,\d)}\psi dm_x^u\right|<C\|\phi\|_r\|\psi\|_re^{-n\a}
\end{aligned}
\end{equation}
for any $n\in \mathbb{N}$ and any $x\in M$. See \cite{DHK} for a related notion of exponential mixing.

\begin{TheoremC}
Let $f: M \to M$ be a $C^1$ dynamically coherent partially hyperbolic diffeomorphism which has subexponential growth in the center direction and satisfies property (EM). Then the topological entropy function $h_{\top}: C^1(M)\to \RR^+\cup\{0\}$ is lower semicontinuous at $f$. Furthermore, $h^u_{\top}: C^1(M)\to \RR^+\cup\{0\}$ is continuous at $f$.
\end{TheoremC}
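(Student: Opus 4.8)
The plan is to deduce Theorem C from Theorem A. In view of Theorem A it suffices to show that a $C^1$ dynamically coherent partially hyperbolic diffeomorphism $f$ with subexponential growth in the center direction and property (EM) automatically has \emph{uniform exponential growth along the unstable foliation} in the sense of Definition \ref{ueg}: once that is established, conclusion (1) of Theorem C is part (1) of Theorem A, and conclusion (2) is part (2) of Theorem A applied with the standing subexponential center hypothesis. So the whole task is the implication (dynamical coherence) $+$ (subexponential center growth) $+$ (EM) $\Longrightarrow$ Definition \ref{ueg}.

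The first ingredient is \emph{effective density of the unstable foliation}, extracted from \eqref{e:mixing0}. Fix a small $\delta>0$ and a point $z\in M$; apply \eqref{e:mixing0} with $\psi\equiv 1$ and a nonnegative $C^r$ bump $\phi$ with $\phi\ge 1$ on $B(z,\delta/2)$, $\supp\phi\subset B(z,\delta)$ and $\|\phi\|_r\le C_0\delta^{-r}$. Since $\int\phi\,dm\ge m(B(z,\delta/2))$ and $m^u_x(W^u_f(x,\delta))$ are both bounded below by a positive power of $\delta$ uniformly in $x$, the main term in \eqref{e:mixing0} dominates the error $C\|\phi\|_re^{-n\alpha}$ as soon as $n\ge N_1(\delta)$, where $N_1(\delta)=O\big(\tfrac1\alpha\log\tfrac1\delta\big)$ is \emph{independent of} $x$; hence $f^nW^u_f(x,\delta)$ meets $B(z,\delta)$ for every $z$, i.e.\ it is $\delta$-dense in $M$, for all $x$ and all $n\ge N_1(\delta)$. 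Using the uniform unstable expansion $d^u(fa,fb)\ge\lambda\,d^u(a,b)$, $\lambda>1$, of a PHD and a routine chart argument, I would upgrade this to: for all $x$ and all $n\ge N_1(\delta)$, the plaque $f^nW^u_f(x,\delta)$ contains a full unstable plaque $W^u_f(w,\delta/4)$ with $w$ within $\delta/4$ of any prescribed point of $M$. Separately, from the definition of $h=h^u_{\top}(f)$ in Section 2 together with the inequality $d^u(f^{\,n-1}a,f^{\,n-1}b)\ge\lambda^{\,n-1-\ell}d^u(f^{\ell}a,f^{\ell}b)$, which turns an $(n,\ep)$-$d^u_n$-separated set in an unstable $\delta$-plaque into a genuinely $d^u$-separated family at time $n-1$, one gets: for every $\rho>0$ and all small $\delta$ there are $x_0\in M$ and arbitrarily large $n$ such that $f^nW^u_f(x_0,\delta)$ contains at least $e^{n(h-\rho)}$ pairwise disjoint unstable $2\delta$-balls.

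The heart of the proof, and the step I expect to be genuinely hard, is the \emph{uniformization}: upgrading the ``for some $x_0$'' just obtained to the ``for every $x$'' required by Definition \ref{ueg}, at the same exponential rate. The naive transport — plant a tiny unstable disk near $x_0$ inside $f^{N_1(\delta)}W^u_f(x,\delta)$ by effective density and then iterate $n$ further steps — does not preserve the rate, since replacing $x_0$ by a nearby point on a different unstable leaf can distort $f^nW^u_f(\cdot,\delta)$ by a factor comparable to $(\mathrm{Lip}\,f)^{n}$ and dilute $h-\rho$ to something strictly smaller. The way around this, and the real use of (EM), is that \eqref{e:mixing0} gives \emph{effective equidistribution} of the pushforwards $(f^n)_*\!\big(m^u_x|_{W^u_f(x,\delta)}\big)$ towards the smooth measure $m$, uniformly in $x$ and at the fixed time scale $N_1(\delta)=O(\log\tfrac1\delta)$. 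Using dynamical coherence to thicken unstable plaques into center–unstable plaques, and subexponential center growth to see that the center factor contributes only a subexponential (hence negligible) factor to the volume growth, one should be able to identify the rate as $h=\int_M\log\big|\det\big(D_xf|_{E_f^u(x)}\big)\big|\,dm(x)$ (for the homogeneous examples this is simply $\log$ of the constant unstable Jacobian); then a Birkhoff-type estimate for this merely continuous log-Jacobian — made uniform in the base point precisely by the effective equidistribution of $W^u$ — would give $c(\delta)>0$ with $m^u_x\big(f^MW^u_f(x,\delta)\big)\ge c(\delta)e^{M(h-\rho)}$ for all $x$ and all large $M$. Finally a volume-to-packing argument, valid for $M$ large (so that the iterated plaque, an $f^M$-image of a round disk, is uniformly thick away from its boundary), extracts at least $e^{M(h-2\rho)}$ pairwise disjoint unstable $2\delta$-balls from $f^MW^u_f(x,\delta)$; taking $N=N(\rho,\delta)$ of this form and absorbing the fixed additive overhead $N_1(\delta)$ yields at least $e^{N(h-3\rho)}$ such balls for every $x$, which is Definition \ref{ueg}.

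With Definition \ref{ueg} in hand, part (1) of Theorem A gives continuity of $h^u_{\top}$ at $f$, and part (2), using subexponential center growth, gives lower semicontinuity of $h_{\top}$ at $f$, as claimed. The two delicate points inside the uniformization step are (i) establishing $h=\int_M\log|\det(D_xf|_{E_f^u(x)})|\,dm$ for a merely $C^1$ system, where Ledrappier–Young and Pesin theory are unavailable and one must exploit (EM) and the subexponential center bound directly, and (ii) converting the effective equidistribution of the unstable foliation into genuinely base-point-uniform control of Birkhoff sums of the continuous unstable log-Jacobian — this is exactly the mechanism by which ``fast equidistribution eliminates the effect of subexponential growth in the center direction''.
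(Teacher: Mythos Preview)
Your overall architecture is right: reduce to Theorem A by verifying Definition \ref{ueg}, and use (EM) to produce an effective density statement for unstable plaques. That is exactly what the paper does (Lemma \ref{l-main} and Proposition \ref{uegnegeral}). The divergence, and the gap, is in the uniformization step.

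You dismiss the ``naive transport'' idea and instead try to identify $h=h^u_{\top}(f)$ with $\int_M\log|\det(D_xf|_{E_f^u})|\,dm$ and then run a uniform Birkhoff/volume argument. But this identification is not available: property (EM) only tells you $m$ is a smooth invariant measure, not that $m$ is a measure of maximal unstable entropy. Even granting Ledrappier--Young (which you correctly note is unavailable in $C^1$), you would only get $h^u_m(f)=\int\log|\det(Df|_{E^u})|\,dm$, and there is no reason for this to equal $h^u_{\top}(f)$ in general. Your volume-to-packing step is also not innocent: a $C^1$ immersed disk of large $u$-volume need not contain many disjoint intrinsic $2\delta$-balls without further geometric control.

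The paper's mechanism is precisely a refined version of the transport you discarded. The point is that (EM) lets you hit not just a $\delta$-ball, but an \emph{exponentially thin} $(n,\bar\e,\d)$-rectangle about $x_0$, of $cs$-width $C_{\bar\e}^{-1}e^{-n\bar\e}\d/2$, at a time cost $k\sim Dn\bar\e$ (Lemma \ref{l-main}). So $f^{k+N_3}W^u_f(x,\d)$ contains an unstable plaque $W^u_f(y,c_2)$ with $d^{cs}(y,x_0)\le C_{\bar\e}^{-1}e^{-n\bar\e}\d/2$. Now transport the $(n_\rho,\e)$ $u$-separated set $S\subset\overline{W^u_f(x_0,\d)}$ to $S'\subset W^u_f(y,c_2)$ by $cs$-holonomy. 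The whole worry about ``distortion comparable to $(\mathrm{Lip}\,f)^n$'' disappears because subexponential center growth (Lemma \ref{growth}) turns the initial $cs$-distance $\le C_{\bar\e}^{-1}e^{-n_\rho\bar\e}\d/2$ into at most $c_0\d$ after $n_\rho$ iterates: the exponential thinness of the target is exactly calibrated to cancel the subexponential center drift. Hence the disjoint balls $W^u_f(f^{n_\rho}x_i,\e/2)$ holonomy to disjoint balls $W^u_f(f^{n_\rho}y_i,c_3)$, and a final $N_3$ iterates inflates these to disjoint $2\d$-balls. The total time is $N(\rho,\d)=n_\rho+Dn_\rho\bar\e+2N_3$; choosing $\bar\e$ small makes $Dn_\rho\bar\e$ a negligible fraction of $n_\rho$, so the rate $h-2\rho$ survives as $h-3\rho$. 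No Jacobian identity and no volume-to-packing argument are needed.
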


An important example of PHDs with property (EM) is the partially hyperbolic translations on compact homogeneous spaces. Let $G$ be a connected semisimple Lie group and $\Gamma\subset G$ be a cocompact lattice. Choose and fix a norm on the Lie algebra $\fg$ of $G$, which induces a right invariant Riemannian metric ``dist" on $G$. Then set $$d(g\Gamma, h\Gamma):=\inf_{\gamma\in \Gamma}\dist(g,h\gamma).$$
We study the topological entropy of the dynamical system $T_g: \ggm\to \ggm$ induced by the left translation of $T_g(x)=gx$, for any $x\in \ggm$. Note that $T_g:\ggm\to \ggm$ is a $C^\infty$ PHD when $\Ad g$ is nonquasiunipotent, see Section $5.2$ for more details. The polynomial orbit growth in the center direction causes substantial difficulties in many important problems in homogenous dynamics (see for example \cite{KM, Kan}). The following theorem is a corollary of Theorem C.
\begin{TheoremD}
Let $G$ be a connected semisimple Lie group, $\Gamma$ be a cocompact lattice of $G$, and $g\in G$. Then the topological entropy function $h_{\top}: C^1(\ggm)\to \RR^+\cup\{0\}$ is lower semicontinuous at $T_g$ and $h_{\top}: C^\infty(\ggm)\to \RR^+\cup\{0\}$ is continuous at $T_g$.
\end{TheoremD}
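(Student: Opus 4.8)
The plan is to deduce Theorem D from Theorem C, using in addition Yomdin's theorem for the $C^\infty$ statement; the only real work is to check that $T_g$ satisfies the hypotheses of Theorem C whenever $\Ad g$ is non-quasiunipotent, and to handle the remaining case separately. If $\Ad g$ is quasiunipotent (all eigenvalues of modulus $1$), then $\|D_xT_g^n\|=\|\Ad(g)^n\|$ is bounded above by a polynomial in $n$, uniformly in $x\in\ggm$, so $T_g$ has polynomial orbit growth and $h_\top(T_g)=0$; lower semicontinuity of $h_\top$ at $T_g$ is then immediate since $h_\top\ge 0$, and combining with the $C^\infty$ upper semicontinuity of $h_\top$ from \cite{Yomdin} (see also \cite{Bu,New}) gives continuity at $T_g$ in the $C^\infty$ topology. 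So assume from now on that $\Ad g$ is non-quasiunipotent.

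In the right-invariant trivialization of $T(\ggm)$ the derivative $DT_g$ is the constant map $\Ad(g)$ on $\fg$. Let $\fg=\fg^s\oplus\fg^c\oplus\fg^u$ be the decomposition into sums of generalized eigenspaces of $\Ad(g)$ with eigenvalue of modulus $<1$, $=1$, $>1$; these are subalgebras (since $[\fg_\lambda,\fg_\mu]\subseteq\fg_{\lambda\mu}$), with $\fg^s,\fg^u$ nilpotent and $\fg^{cs}=\fg^c\oplus\fg^s$, $\fg^{cu}=\fg^c\oplus\fg^u$ subalgebras, and since $\det\Ad(g)=\pm1$, non-quasiunipotence forces $\fg^s\ne0\ne\fg^u$. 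By Theorem D of \cite{PS}, $T_g$ is a $C^\infty$ PHD whose stable, center and unstable distributions are the right-invariant distributions spanned by $\fg^s,\fg^c,\fg^u$, and it is dynamically coherent: the leaves of $W^{cs}_{T_g},W^{cu}_{T_g},W^{c}_{T_g},W^{u}_{T_g}$ through $x\in\ggm$ are the left orbits of the connected subgroups with Lie algebras $\fg^{cs},\fg^{cu},\fg^{c},\fg^{u}$. Moreover $\|D_xT_g^n|_{E^c(x)}\|=\|\Ad(g)^n|_{\fg^c}\|$ is polynomial in $n$ and independent of $x$ (Jordan form, all eigenvalues of $\Ad(g)|_{\fg^c}$ of modulus $1$), so $T_g$ has subexponential growth in the center direction.

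It remains to check property (EM). The system $(\ggm,g)$ satisfies ``Condition (EM)'' of \cite[Section 2.4.1]{KM}: the cocompactness of $\Gamma$ provides a uniform spectral gap for the regular representation on $L^2_0(\ggm)$, which together with the Howe--Moore property and non-quasiunipotence of $\Ad g$ gives exponential decay of correlations $|\int(\phi\circ T_g^n)\psi\,dm-\int\phi\,dm\int\psi\,dm|\le C\|\phi\|_r\|\psi\|_re^{-n\a}$ for $C^r$ observables with $r$ large, and the leafwise estimate \eqref{e:mixing0} then follows by the standard thickening argument: replace the integral over $W^u_{T_g}(x,\d)$ by an ambient $m$-integral of $\psi$ against a smooth bump supported on a thin box $W^u_{T_g}(x,\d)\cdot P$, with $P$ a small $\fg^{cs}$-plaque whose diameter is taken to decay like a fixed negative power of $e^{n\a}$, apply the mixing estimate, and absorb the replacement error using that $T_g^n$ contracts $\fg^s$ exponentially, expands $\fg^u$ exponentially, and distorts $\fg^c$ only polynomially. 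This last point, getting the exponential mixing rate to dominate the polynomial center distortion while only H\"older unstable holonomies are available, is the main technical obstacle, and is precisely the mechanism Theorem C is built to exploit. (Alternatively, for a translation one can verify Definition \ref{ueg} directly, since $\Ad(g)^{-1}$ contracts $\fg^u$ uniformly and, by right-invariance, the count in Definition \ref{ueg} is independent of $x$; then Theorem A applies and also covers reducible lattices, for which property (EM) in the strict sense above may fail.)

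With the hypotheses of Theorem C verified, $h_\top$ is lower semicontinuous at $T_g$ in $C^1(\ggm)$ --- hence also in $C^\infty(\ggm)$, since $C^\infty$-convergence implies $C^1$-convergence --- and $h^u_\top$ is continuous at $T_g$. Combining the $C^\infty$ lower semicontinuity with the $C^\infty$ upper semicontinuity of $h_\top$ from \cite{Yomdin} yields continuity of $h_\top$ at $T_g$ in the $C^\infty$ topology. Together with the quasiunipotent case this completes the proof.
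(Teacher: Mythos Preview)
Your overall strategy matches the paper's: handle the quasiunipotent case by observing $h_\top(T_g)=0$, and in the non-quasiunipotent case deduce uniform exponential growth along the unstable foliation from exponential mixing, then invoke Theorem~A (equivalently Theorem~C) together with Yomdin for the $C^\infty$ statement. The description of the splitting, dynamical coherence, and subexponential center growth is correct.

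The gap is in your verification of property~(EM) in the full generality of Theorem~D. Your claim that cocompactness of $\Gamma$ yields a spectral gap sufficient for \eqref{e:mixing0} fails when $G$ has compact factors or $\Gamma$ is reducible. For instance, take $G=G_1\times G_2$, $\Gamma=\Gamma_1\times\Gamma_2$, and $g=(g_1,e)$ with $g_1$ non-quasiunipotent: the unstable leaves lie in the $G_1$ direction, and choosing $\phi$ to depend only on the $G_2/\Gamma_2$ coordinate gives $\phi\circ T_g^n=\phi$, so the left side of \eqref{e:mixing0} equals $(\phi(x_2)-\int\phi\,dm)\int_{W^u(x,\d)}\psi\,dm^u_x$, which does not decay. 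The paper's Proposition~\ref{prop-km} accordingly requires $G$ to have no compact factors and $\Gamma$ to be irreducible, and the passage to the general case is done separately in Proposition~\ref{general}: pass to the simply connected cover, split off compact factors, decompose the lattice into irreducible pieces via Raghunathan, establish uniform exponential growth for each factor, and then lift it through products, finite covers, and the fiber bundle $\ggm\to G'/p(\Gamma)$. Theorem~D is then deduced from Theorem~A, not Theorem~C.

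Your parenthetical alternative is actually a cleaner route than the paper's decomposition and deserves to be the main argument rather than an aside: right-invariance of the metric makes the parametrization $H^+\ni h\mapsto hx$ an isometry onto $W^u(x)$, so $f^NW^u(x,\d)$ corresponds to $C(g^N)(B^+_\d)\subset H^+$ independently of $x$, and hence the count in Definition~\ref{ueg} is the same at every point. Since $h^u_\top(T_g)=\sup_x h_\top(T_g,\overline{W^u(x,\d)})$, this sup is attained at every $x$; one then converts an $(n,\epsilon)$ $u$-separated set into disjoint $2\d$-balls by iterating $N_3$ further steps exactly as in Proposition~\ref{ueg0}. As written, though, your parenthetical omits this conversion and so does not yet constitute a proof.
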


\begin{remark}
The term ``Condition (EM)'' is used in \cite[Section 2.4.1]{KM} for $(\ggm, g^t)$ where $g^t$ is a one-parameter subgroup of $G$ (other conditions on $\ggm$ are imposed), from which our property (EM) \eqref{e:mixing0} can be derived (cf. \cite[Proposition 2.4.8]{KM}). In our discrete setting, property (EM) follows from \cite[Propositions 3.1]{GSW} (see Section 5.2). It is interesting to know whether the time-one map of a smooth time change of $(\ggm, g^t)$ satisfies property (EM).
\end{remark}

%The precise meaning of nonexpanding center will be explained in Section $4.1$.
\begin{TheoremE}
Let $T_A: \mathbb{T}^d\to \mathbb{T}^d$ be an ergodic toral automorphism. Then the topological entropy function $h_{\top}: C^1(\mathbb{T}^d)\to \RR^+\cup\{0\}$ is lower semicontinuous at $T_A$. Furthermore, $h^u_{\top}: C^1(\mathbb{T}^d)\to \RR^+\cup\{0\}$ is continuous at $T_A$, and $h_{\top}: C^\infty(\mathbb{T}^d)\to \RR^+\cup\{0\}$ is continuous at $T_A$.
\end{TheoremE}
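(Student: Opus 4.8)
The plan is to deduce all three assertions from Theorem~A, so the work is to verify its hypotheses for an ergodic toral automorphism $T_A$, where $A\in \GL(d,\mathbb{Z})$. First I would record the standard structure theory. Recall that $T_A$ is ergodic if and only if no eigenvalue of $A$ is a root of unity; since $A$ is integral with $|\det A|=1$, Kronecker's theorem then forces $A$ to have an eigenvalue of modulus $>1$ and, consequently, one of modulus $<1$. Writing $\mathbb{R}^d=E^s\oplus E^c\oplus E^u$ for the sums of generalized eigenspaces of $A$ for eigenvalues of modulus $<1$, $=1$, $>1$ respectively, one checks with a suitable flat adapted metric on $\mathbb{T}^d$ that $T_A$ is a $C^\infty$, dynamically coherent partially hyperbolic diffeomorphism (the splitting is nontrivial because $E^s\neq 0\neq E^u$) whose invariant foliations are all linear: in the universal cover $\mathbb{R}^d$ each leaf is an affine translate of the corresponding subspace, and the leafwise metric $d^u$ on $W^u$ is, after a linear change of coordinates, the Euclidean metric on $E^u$.

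Next I would verify the two hypotheses of Theorem~A. Subexponential growth in the center direction is immediate: every eigenvalue of $A|_{E^c}$ has modulus $1$, so the Jordan form gives $\|A^n|_{E^c}\|\le C\,n^{\dim E^c}$, which is subexponential (and bounded if $A|_{E^c}$ is semisimple). The substantive point is uniform exponential growth along the unstable foliation in the sense of Definition~\ref{ueg}. Put $h:=h^u_{\top}(T_A)$; by \cite{HHW} this equals the exponential volume growth rate of unstable discs, which here is simply $\log|\det(A|_{E^u})|=\sum_{|\lambda_i|>1}\log|\lambda_i|>0$. Fix small $\rho,\delta>0$. In the universal cover $W^u_{T_A}(x,\delta)$ is a Euclidean $\delta$-ball $B_\delta\subset E^u$, and $f^NW^u_{T_A}(x,\delta)$ is the ellipsoid $A^N(B_\delta)$ (suitably translated) inside the leaf $W^u(T_A^Nx)$, which we parametrize by $E^u$. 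Since $A|_{E^u}$ is expanding there are $c>0$, $\mu>1$ with $\|A^{-n}|_{E^u}\|\le c^{-1}\mu^{-n}$, so every semi-axis of $A^N(B_\delta)$ is at least $c\mu^N\delta$ and $\Vol\bigl(A^N(B_\delta)\bigr)=e^{Nh}\Vol(B_\delta)$.

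Now I would run a greedy packing inside the shrunken ellipsoid $K:=A^N(B_{\delta/2})$. A maximal $4\delta$-separated subset $\{z_i\}\subset K$ satisfies $\#\{z_i\}\ge \Vol(K)/\Vol(B_{4\delta})=8^{-\dim E^u}e^{Nh}$, since the balls $B_{4\delta}(z_i)$ cover $K$. For $N$ large enough that $\|A^{-N}|_{E^u}\|\cdot 2\delta<\delta/2$, each Euclidean ball $B_{2\delta}(z_i)$ is contained in $A^N(B_\delta)$, so $W^u_{T_A}(z_i,2\delta)\subset f^NW^u_{T_A}(x,\delta)$, and these discs are pairwise disjoint in the leaf because the $z_i$ are $4\delta$-separated. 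Since $8^{-\dim E^u}e^{Nh}\ge e^{N(h-\rho)}\ge e^{N(h-3\rho)}$ for all large $N$, and every quantity above is independent of $x$ (the maps involved are linear), Definition~\ref{ueg} holds with an appropriate $N(\rho,\delta)$. Then Theorem~A(1) gives continuity of $h^u_{\top}$ at $T_A$ in the $C^1$ topology, and Theorem~A(2), using the subexponential center growth together with the fact that $T_A$ is $C^\infty$, gives lower semicontinuity of $h_{\top}$ at $T_A$ in $C^1$ and continuity of $h_{\top}$ at $T_A$ in $C^\infty$.

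The main obstacle is exactly the packing step. Because the expanding eigenvalues of $A$ may have very different moduli (or $E^u$ may be a rational subspace, so individual unstable leaves need not equidistribute), one cannot obtain the required exponential count by inscribing a single round ball in $A^N(B_\delta)$; one must exploit the full volume $e^{Nh}$ of the ellipsoid, which is legitimate precisely because $A|_{E^u}$ expands in every direction, so $A^N(B_\delta)$ is uniformly thick in all directions once $N$ is large. This is the ``effective density of the unstable foliation'' mentioned in the introduction, in its simplest linear form; everything else is routine bookkeeping.
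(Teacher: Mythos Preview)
Your argument is correct and reaches the same conclusion via Theorem~A, but the route is genuinely different from the paper's. The paper does \emph{not} pack the ellipsoid $A^N(B_\delta)$ directly; instead it follows the general template of Propositions~\ref{ueg0}/\ref{uegnegeral}: first locate a single point $x_0$ with $h_{\top}(f,\overline{W^u(x_0,\delta)})>h-\rho$, then use an \emph{effective density} statement (Lemma~\ref{dense}, due to Marcus, that $B^u(\tau^n)$ is $(1/n^d)$-dense in $\mathbb{T}^d$) together with $cs$-holonomy to transfer the separated set near $x_0$ to an arbitrary $x$. Your approach bypasses all of this by exploiting the linearity: since $T_A^NW^u(x,\delta)$ is literally an ellipsoid of volume $e^{Nh}\Vol(B_\delta)$ with every semi-axis $\gtrsim\mu^N\delta$, a greedy $4\delta$-packing already gives the required $e^{N(h-3\rho)}$ disjoint $2\delta$-balls, uniformly in $x$. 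This is shorter and more elementary for $\mathbb{T}^d$, and has the pleasant feature that $N(\rho,\delta)$ is actually independent of $\delta$. The paper's approach, on the other hand, illustrates its systematic method (effective density $\Rightarrow$ uniform exponential growth) in the simplest setting, which is the real point of including toral automorphisms here. One small remark: your final paragraph mislabels the packing step as ``effective density of the unstable foliation''; in the paper's terminology that phrase refers to equidistribution of leaves in the ambient manifold, which your argument never uses and does not need.
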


\begin{remark}
We will state Theorem D', an improvement of Theorem D in Section 5.2, and give a simpler proof. Similarly, Theorem E can also be improved, see Theorem E' in Section 5.3. Nevertheless, our technique provides a systematic way to treat these problems.
\end{remark}

The paper is organized as follows. In Section $2$, we recall the notion of unstable entropy and prove Proposition \ref{unstableusc}. Section 3 is devoted to proving main Theorem A. In section 4, we consider PHDs with Lyapunov stable center and prove Theorem B. Theorems C, D and E are proved in the last section, where we also summarize the known examples of PHDs at which the topological entropy has continuity properties.

\section{Unstable topological entropy}
In this section, we collect some results on unstable topological entropy, and prove Proposition \ref{unstableusc}. The results in this section except Proposition \ref{unstable} hold for general PHDs.

Let $M$ be a $d$-dimensional smooth, closed manifold and let $f: M \to M$ be a $C^1$ PHD. We start to recall the definition of unstable topological entropy given in \cite{HHW}.

Denote by $d^\sigma$ the metric induced by the Riemannian structure
on $W^\sigma_f$, $\sigma\in \{s,u,cs,cu,c\}$ if the foliation exists, and then denote by $W_f^\sigma(x,\delta)$ the open ball inside $W_f^\sigma(x)$ centered at $x\in M$
of radius $\delta>0$ with respect to the metric $d^\sigma$.
Given any $y\in W_f^u(x)$, define
$$d^u_{n}(x,y):=\max _{0 \leq j <n}d^u(f^j(x),f^j(y)).$$
Let $N^u(f,n,\epsilon,x,\delta)$ be the maximal
number of points in $\overline{W_f^u(x,\delta)}$ with pairwise
$d^u_{n}$-distances at least $\epsilon$.  We call such set a maximal
\emph{$(n,\epsilon)$ u-separated set} of $\overline{W_f^u(x,\delta)}$.

\begin{definition}\label{Defutopent1}
The \emph{unstable topological entropy} of $f$ on $M$ is defined by
\begin{equation*}
%\begin{aligned}
h^u_{\text{top}}(f)
:=\lim_{\delta \to 0}\sup_{x\in M}h_{\text{top}}(f, \overline{W_f^u(x,\delta)}),
%\end{aligned}
\end{equation*}
where
\begin{equation*}
%\begin{aligned}
 h_{\text{top}}(f, \overline{W_f^u(x,\delta)})
:=\lim_{\epsilon \to 0}\limsup_{n\to \infty}\frac{1}{n}\log N^u(f,n,\epsilon,x,\delta).
%\end{aligned}
\end{equation*}
\end{definition}
\begin{remark}\label{unstableentropy}
One can also use $(n,\epsilon)$ u-spanning sets or open covers to give equivalent
definitions for unstable topological entropy. Moreover,
$$h^u_{\text{top}}(f)
=\sup_{x\in M}h_{\text{top}}(f, \overline{W_f^u(x,\delta)})$$
for any $\d>0$ small enough (cf. Lemma 4.1 in \cite{HHW}).
\end{remark}

The unstable volume growth (cf. \cite{HSX}) is defined as follows:
\begin{equation*}
\chi_u(f):= \sup_{x \in M} \chi_u(x, \delta)
\end{equation*}
where
\begin{equation*}
\chi_u(x, \delta) := \limsup_{n \to \infty} \frac{1}{n} \log
(\text{Vol} (f^n(W_f^u(x,\delta))).
\end{equation*}
It is independent of the choice of $\delta>0$ and the Riemannian metric. As mentioned in the introduction, by Theorem C in \cite{HHW},
$$h^u_{\text{top}}(f)=\chi_u(f).$$

Let $\mu$ be an $f$-invariant Borel probability measure on $M$. In \cite{HHW}, the authors give a new definition for the unstable metric entropy $h_{\mu}^u(f)$ which coincides with the one considered by Ledrappier and Young \cite{LY2}, by using measurable partitions subordinate to unstable manifolds that can be obtained by refining a finite Borel partition into pieces of unstable leaves. We skip this definition here, just recalling two important properties of unstable metric entropy as follows.

Let $\mathcal{M}_f(M)$ (resp. $\mathcal{M}^e_f(M)$) denote
the set of all $f$-invariant (resp. $f$-ergodic) Borel probability measures on $M$.
The following variational principle is proved in Theorem D of \cite{HHW}.
\begin{theorem} \label{vp}
Let $f: M\to M$ be any $C^1$ PHD. Then
$$h^u_{\top}(f)=\max_{\mu \in \mathcal{M}_f(M)}h_{\mu}^u(f)=\max_{\mu \in \mathcal{M}^e_f(M)}h_{\mu}^u(f).$$
\end{theorem}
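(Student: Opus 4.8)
The plan is to establish the two halves of a variational principle, following the Misiurewicz-style argument for the classical variational principle for topological entropy, but carried out leafwise along $W^u_f$. Throughout, for an $f$-invariant $\mu$ write $\{\mu_x^u\}$ for the system of conditional measures of $\mu$ on a measurable partition into local unstable plaques, and $B^u_n(x,\epsilon):=\{y\in W^u_f(x): d^u_n(x,y)<\epsilon\}$.

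First the easy inequality $\sup_{\mu\in\mathcal{M}^e_f(M)}h^u_\mu(f)\le h^u_{\top}(f)$. Fix an ergodic $\mu$ and, using the construction of subordinate partitions in \cite{HHW}, a measurable partition $\xi$ subordinate to $W^u_f$ with $h^u_\mu(f)=h_\mu(f,\xi)$. A leafwise Shannon--McMillan--Breiman theorem for the conditional measures $\mu_x^u$ gives, for $\mu$-a.e.\ $x$ and all small $\epsilon>0$, that $-\frac1n\log\mu_x^u\big(B^u_n(x,\epsilon)\big)\to h^u_\mu(f)$. A Vitali-type covering argument inside a single unstable plaque $\overline{W_f^u(x_0,\delta)}$ of positive $\mu_{x_0}^u$-measure then shows that any maximal $(n,\epsilon)$ u-separated set of that plaque has cardinality at least $e^{n(h^u_\mu(f)-\rho)}$ for every $\rho>0$ and all large $n$; letting $\epsilon\to0$ yields $h_{\top}(f,\overline{W_f^u(x_0,\delta)})\ge h^u_\mu(f)$, hence $h^u_{\top}(f)\ge h^u_\mu(f)$.

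Now the hard inequality $h^u_{\top}(f)\le\max_{\mu}h^u_\mu(f)$. Fix $\delta>0$ small and $x_0$ with $h_{\top}(f,\overline{W_f^u(x_0,\delta)})=h^u_{\top}(f)$, fix $\epsilon_k\downarrow0$, and for each $k$ pick (along a subsequence of $n$) maximal $(n,\epsilon_k)$ u-separated sets $E_n\subset\overline{W_f^u(x_0,\delta)}$ with $\#E_n\ge e^{n(h^u_{\top}(f)-1/k)}$. Set $\sigma_n:=\frac1{\#E_n}\sum_{x\in E_n}\delta_x$, $\mu_n:=\frac1n\sum_{j=0}^{n-1}f^j_*\sigma_n$, and pass to a weak-$*$ limit $\mu=\lim\mu_{n_i}$, which is $f$-invariant. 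Choose a finite Borel partition $\P$ with $\diam\P<\epsilon_k$ and $\mu(\partial\P)=0$, and refine $\P$ (joined with the plaque partition) to a measurable partition $\xi$ subordinate to $W^u_f$. Since $E_n$ is $\epsilon_k$-separated, its points lie in distinct atoms of $\bigvee_{j=0}^{n-1}f^{-j}\xi$, so $H_{\sigma_n}\!\big(\bigvee_{j=0}^{n-1}f^{-j}\xi\big)\ge\log\#E_n$. The standard concavity manipulation — splitting the time average, using $H_{\mu_n}\ge\frac1n\sum_j H_{f^j_*\sigma_n}$, and letting $n_i\to\infty$ with $\mu(\partial\P)=0$ so that the relevant entropies of $\mu_{n_i}$ converge to those of $\mu$ — produces $h^u_\mu(f)\ge h_\mu(f,\xi)\ge h^u_{\top}(f)-1/k$. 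Finally, a diagonal argument in $k$ together with affineness of $\nu\mapsto h^u_\nu(f)$ under ergodic decomposition (from \cite{HHW}) lets us replace $\mu$ by an ergodic component of no smaller unstable entropy; this simultaneously upgrades ``$\sup$'' to ``$\max$'' and delivers the ergodic version.

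The main obstacle is the hard inequality, and precisely the bookkeeping forced by the fact that $h^u_\mu(f)$ is defined via measurable (not finite) partitions subordinate to $W^u_f$: one must verify that the refinement $\xi$ of the finite partition $\P$ is genuinely subordinate, that $H_{\sigma_n}(\bigvee_j f^{-j}\xi)$ still controls $\log\#E_n$ despite $\xi$ having infinitely many atoms transverse to $W^u_f$, and that the conditional-measure and continuity-of-entropy estimates survive the weak-$*$ limit. Because we work only in the $C^1$ category, no use of absolute continuity of the unstable foliation or of a Ledrappier--Young formula is available, so the whole argument must remain inside the combinatorial entropy framework. The easy inequality is routine once the leafwise Shannon--McMillan--Breiman theorem and the Vitali covering on one unstable plaque are in place.
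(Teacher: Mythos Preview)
The paper does not prove this theorem at all: it is quoted verbatim as Theorem~D of \cite{HHW} and used as a black box. So there is nothing to compare your argument to in the present paper; you are sketching a proof of a result the author simply imports.

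That said, your outline is essentially the route taken in \cite{HHW}: the hard inequality is obtained by a Misiurewicz-type construction (separated sets on a single unstable plaque, Ces\`aro averages, weak-$*$ limit, block decomposition and concavity), and the point you flag as the main obstacle --- that the relevant partition is measurable and subordinate to $W^u_f$ rather than finite, so the passage to the limit and the identification with $h^u_\mu(f)$ require care --- is exactly where \cite{HHW} spends its effort. Your treatment of the easy inequality via a leafwise Brin--Katok/SMB statement is more informal than what \cite{HHW} does; note that the conditional measures $\mu^u_x$ are not $f$-invariant, so the ``leafwise Shannon--McMillan--Breiman theorem'' you invoke is really the unstable Brin--Katok formula established in that paper, and should be cited as such rather than asserted. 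One small slip: in the last step you write $h^u_\mu(f)\ge h_\mu(f,\xi)$, but for a partition $\xi$ subordinate to $W^u_f$ this is by definition an equality in the framework of \cite{HHW}; nothing is lost, but the inequality sign obscures that you have actually hit the target. For the purposes of the present paper, however, the correct move is simply to cite \cite{HHW} as the author does.
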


The upper semicontinuity of the unstable metric entropy function
$$h^u_{\{\cdot\}}(f): \M_f(M)\to \RR^+\cup\{0\}$$
is proved in Proposition 2.15 in \cite{HHW}. In fact, Theorem A in \cite{Yang} gives the following more general result:
\begin{theorem} \label{usc}
Let $f_n, f:M\to M$ be $C^1$ PHDs such that $f_n\to f$ in the $C^1$ topology as $n\to \infty$. Suppose that $\mu_n\in \M_{f_n}(M)$, and $\mu_n\to \mu$ in the weak$^*$ topology as $n\to \infty$. Then
$$\limsup_{n\to \infty}h^u_{\mu_n}(f_n)\leq h^u_\mu(f).$$
\end{theorem}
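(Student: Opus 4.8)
The plan is to bound $h^u_{\mu_k}(f_k)$, for large $k$, by a fixed, finite-time quantity and then exploit the two convergences $f_k\to f$ (in $C^1$) and $\mu_k\to\mu$ (weak$^*$) separately.

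I will work with the partition description of unstable metric entropy from \cite{HHW}. Fix a measurable partition $\xi$ subordinate to $W_f^u$, and for a finite Borel partition $\alpha$ set $\alpha_0^{q-1}:=\bigvee_{i=0}^{q-1}f^{-i}\alpha$. By \cite{HHW} the conditional entropies $q\mapsto H_\mu(\alpha_0^{q-1}\mid\xi)$ are subadditive, so
\begin{equation*}
h^u_\mu(f,\alpha)=\lim_{q\to\infty}\tfrac1q H_\mu(\alpha_0^{q-1}\mid\xi)=\inf_{q}\tfrac1q H_\mu(\alpha_0^{q-1}\mid\xi),
\end{equation*}
and in particular $h^u_\mu(f,\alpha)\le\tfrac1q H_\mu(\alpha_0^{q-1}\mid\xi)$ for every $q$. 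Moreover, since $f^{-1}$ contracts $E_f^u$, the refinements $\alpha_0^{q-1}$ eventually cut each unstable plaque into arbitrarily small pieces; hence, by \cite{HHW}, whenever $\diam\alpha$ is below the local unstable-manifold scale one has $h^u_\mu(f)=h^u_\mu(f,\alpha)$.

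Because partial hyperbolicity, the size of local unstable manifolds, and the unstable expansion rate are all uniform on a $C^1$-neighborhood $\mathcal U$ of $f$, I can fix \emph{one} finite partition $\alpha$ with $\mu(\partial\alpha)=0$ and $\diam\alpha$ small enough that $h^u_\nu(g)=h^u_\nu(g,\alpha)$ holds for $g=f$ and for every $g\in\mathcal U$ (in particular all $f_k$ with $k$ large) and every $g$-invariant $\nu$. For such $g,\nu$ the previous step gives, for every fixed $q$,
\begin{equation*}
h^u_{\nu}(g)\ \le\ \tfrac1q\,H_{\nu}\Big(\textstyle\bigvee_{i=0}^{q-1}g^{-i}\alpha\ \Big|\ \xi_g\Big),
\end{equation*}
with $\xi_g$ subordinate to $W_g^u$. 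It therefore suffices to prove, for each fixed $q$,
\begin{equation}\label{eq:cond-usc}
\limsup_{k\to\infty}H_{\mu_k}\Big(\textstyle\bigvee_{i=0}^{q-1}f_k^{-i}\alpha\ \Big|\ \xi_{f_k}\Big)\ \le\ H_{\mu}\Big(\textstyle\bigvee_{i=0}^{q-1}f^{-i}\alpha\ \Big|\ \xi_{f}\Big).
\end{equation}
Indeed, \eqref{eq:cond-usc} yields $\limsup_k h^u_{\mu_k}(f_k)\le\tfrac1q H_\mu(\alpha_0^{q-1}\mid\xi_f)$ for every $q$, and taking the infimum over $q$ gives $\limsup_k h^u_{\mu_k}(f_k)\le h^u_\mu(f,\alpha)=h^u_\mu(f)$, which is the assertion.

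The estimate \eqref{eq:cond-usc} is the heart of the proof, and I expect it to be the main obstacle, for two reasons of very different depth. The first is mild: since $\mu$ is $f$-invariant and $\mu(\partial\alpha)=0$, each $f^{-i}\partial\alpha$ is $\mu$-null, hence so is $\partial(\bigvee_{i=0}^{q-1}f^{-i}\alpha)$; as $f_k\to f$ in $C^1$ the finitely many sets $\partial(\bigvee_{i=0}^{q-1}f_k^{-i}\alpha)$ converge to it, so the atoms of $\bigvee_{i=0}^{q-1}f^{-i}\alpha$ are $\mu$-continuity sets and $\mu_k$ of the corresponding $f_k$-atoms converge to the $\mu$-measures of the limiting atoms. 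The second difficulty is genuine: the conditioning partition $\xi_{f_k}$ is subordinate to the unstable foliation $W_{f_k}^u$, which itself moves with $k$, and a conditional entropy relative to a measurable partition with uncountably many plaque-atoms is not obviously stable under such a perturbation. My plan to handle it is to replace each $\xi_g$ by conditioning on a \emph{finite} partition built from a single foliation-box cover adapted to $W_f^u$ — boxes saturated by local unstable plaques and fine in the complementary (center-stable) directions. Stability of the unstable foliation of a PHD under $C^1$ perturbation lets the same box cover serve $f$ and all $f_k$ with $k$ large; one then passes to the weak$^*$ limit in the finitely-conditioned entropies, using convergence of the disintegrations $\mu_{k}^{\xi_{f_k}}\to\mu^{\xi_f}$ together with continuity of $t\mapsto-t\log t$, and finally refines the transversal cut to recover the true $\xi$-conditioned values. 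In short, the crux is a joint upper semicontinuity of the unstable-conditioned entropy in all three parameters at once — the map, the invariant measure, and the conditioning foliation — and controlling the simultaneous convergence of the conditional measures is where the real work lies.
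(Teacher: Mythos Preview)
The paper does not actually prove this theorem: it is quoted as Theorem~A of \cite{Yang} (Yang, \emph{Entropy along expanding foliations}), and the paper uses it as a black box to deduce Proposition~\ref{unstableusc}. So there is no ``paper's own proof'' to compare against here.

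That said, your outline is the right shape and matches the standard strategy for results of this type: reduce to a finite-horizon conditional-entropy estimate via subadditivity, fix a single fine partition $\alpha$ with $\mu(\partial\alpha)=0$, and then pass to the limit in the finitely refined conditional entropies. You have correctly located the genuine obstacle, namely that the conditioning partition $\xi_{f_k}$ lives on the moving foliation $W^u_{f_k}$, so one cannot simply invoke weak$^*$ continuity of ordinary entropy. Your plan to replace $\xi_g$ by a common foliation-box structure and use $C^1$-stability of unstable foliations is also in the right direction. Two cautions, though: first, the claim that a single $\alpha$ satisfies $h^u_\nu(g)=h^u_\nu(g,\alpha)$ for \emph{all} $g\in\mathcal U$ and \emph{all} $\nu\in\mathcal M_g(M)$ needs justification beyond ``$\diam\alpha$ small'' --- in \cite{HHW} this is proved via a generator-type argument that depends on the unstable expansion rate, so you should make the uniformity over $\mathcal U$ explicit; second, the passage ``convergence of the disintegrations $\mu_k^{\xi_{f_k}}\to\mu^{\xi_f}$'' is precisely the delicate point and is not automatic from weak$^*$ convergence plus $C^0$-closeness of foliations --- this step requires a careful approximation argument (in Yang's paper it is handled by working with finite partitions whose atoms are unions of local unstable plaques and controlling boundary effects uniformly). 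As written, your proposal is a correct roadmap but stops short of the analytic work needed to close \eqref{eq:cond-usc}.
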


We are ready to prove Proposition \ref{unstableusc}, that is, $h^u_{\top}: C^1(M)\to \RR^+\cup\{0\}$
is upper semicontinuous at any $C^1$ PHD $f$.
\begin{proof}[Proof of Proposition \ref{unstableusc}]
For any $C^1$ PHD $f: M\to M$, let $f_n\to f$ in the $C^1$ topology as $n\to \infty$. Then $f_n$ are also PHDs. According to Theorem \ref{vp}, we know that for each $f_n$, there exists $\mu_n\in \M_{f_n}(M)$ such that
\begin{equation}\label{e:max}
h^u_{\top}(f_n)=h^u_{\mu_n}(f_n).
\end{equation}
Let $n_k$ be a subsequence such that
\begin{equation}\label{e:limit}
\limsup_{n\to \infty}h^u_{\top}(f_n)=\lim_{k\to \infty}h^u_{\top}(f_{n_k}).
\end{equation}
Then a subsequence of $\mu_{n_k}$ converges to a probability measure $\mu$ on $M$. Without loss of generality, we assume $\mu_{n_k}\to \mu$.
It is straightforward to verify that $\mu \in \M_{f}(M)$. By Theorem \ref{usc}, we have
\begin{equation}\label{e:less}
\lim_{k\to \infty}h^u_{\mu_{n_k}}(f_{n_k})\leq h^u_\mu(f).
\end{equation}
By Theorem \ref{vp}, $h^u_\mu(f)\le h^u_\top(f)$. Thus by \eqref{e:max},\eqref{e:limit} and \eqref{e:less},
$$\limsup_{n\to \infty}h^u_{\top}(f_n)\leq h^u_\top(f).$$
The proof of the proposition is complete.
\end{proof}

Let us consider a $C^1$ PHD $f:M\to M$ with subexponential growth in the center direction, that is, for any $\e>0$, there exists a constant $C_\e>0$ such that $\|D_xf^n|_{E_f^c(x)}\|\leq C_\e e^{n\e}$ for any $n\in \mathbb{N}$ and any $x\in M$. The following lemma is useful for $C^1$ PHDs.

\begin{lemma}\label{unstablevolume}(Cf. \cite{HSX})
Let $f:M\to M$ be a $C^{1}$ PHD and $\mu$ an $f$-invariant and ergodic measure. Then
$$h_\mu(f)\leq \chi^u(f)+\sum_{\lambda_i^c(\mu)>0}m_i(\mu)\lambda_i^c(\mu)$$
where the sum is taken over all the positive Lypapunov exponents $\lambda_i^c(\mu)$ of $\mu$ in the center direction with multiplicity $m_i(\mu)$.
\end{lemma}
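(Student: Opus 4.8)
The plan is to reduce the inequality to a combination of the Ledrappier--Young--type machinery available for $C^1$ PHDs and the subexponential (in this lemma, general) bounds on the center growth. First I would invoke the Ruelle inequality together with the Ledrappier--Young/Pesin-type decomposition of the metric entropy $h_\mu(f)$ according to the Oseledets splitting of $\mu$. Since $\mu$ is ergodic, the Lyapunov exponents are constants $\mu$-a.e., and they split into exponents coming from $E^s_f$, $E^c_f$ and $E^u_f$. The exponents along $E^u_f$ are all positive and the exponents along $E^s_f$ are all negative; only the center exponents can have either sign. The Ruelle inequality gives $h_\mu(f)\le \sum_{\lambda_i(\mu)>0} m_i(\mu)\lambda_i(\mu)$, the sum over all positive exponents; I would split this sum into the $E^u_f$ part and the positive-center part:
\[
h_\mu(f)\le \sum_{\text{unstable}} m_i(\mu)\lambda_i(\mu)+\sum_{\lambda_i^c(\mu)>0} m_i(\mu)\lambda_i^c(\mu).
\]

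The main step is then to bound the unstable part $\sum_{\text{unstable}} m_i(\mu)\lambda_i(\mu)$ by the unstable volume growth $\chi^u(f)$. The key observation is that the sum of the positive exponents along $E^u_f$ equals the exponential growth rate of the Jacobian of $Df^n$ restricted to $E^u_f(x)$, i.e.\ $\lim_{n\to\infty}\frac1n\log|\det(D_xf^n|_{E^u_f(x)})|$, which by the multiplicative ergodic theorem is constant $\mu$-a.e. This quantity is exactly the infinitesimal volume growth rate along unstable leaves; integrating over a small unstable ball $W^u_f(x,\delta)$ and comparing with $\mathrm{Vol}(f^n(W^u_f(x,\delta)))$ shows that $\sum_{\text{unstable}} m_i(\mu)\lambda_i(\mu)\le \chi_u(x,\delta)\le\chi_u(f)=\chi^u(f)$ for $\mu$-a.e.\ $x$. (One uses bounded distortion of the unstable Jacobian along a leaf, which holds for $C^1$ maps because $E^u_f$ varies continuously and $Df$ is uniformly continuous; no $C^{1+\alpha}$ hypothesis is needed for this direction, only for the reverse equality $h_\mu^u(f)=\chi^u(f)$ which we do not use here.) Combining this with the split Ruelle inequality yields
\[
h_\mu(f)\le \chi^u(f)+\sum_{\lambda_i^c(\mu)>0} m_i(\mu)\lambda_i^c(\mu),
\]
which is the assertion.

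I would expect the main obstacle to be making the comparison between the abstract sum of unstable Lyapunov exponents and the geometric quantity $\chi^u(f)$ fully rigorous in the $C^1$ category: one must control the distortion of $D_xf^n|_{E^u_f}$ along an unstable leaf without a Hölder modulus, and one must ensure that the set of $x$ of full $\mu$-measure where the Oseledets theorem applies has nontrivial intersection with leaves on which the volume-growth $\limsup$ is controlled. The standard way around this is to note that $\chi_u(f)$ is defined as a supremum over all $x$ and all $\delta$, and the relevant continuity/boundedness of the unstable Jacobian over the compact manifold $M$ gives a uniform distortion bound $\log|\det D_xf^n|_{E^u_f}|-\log|\det D_yf^n|_{E^u_f}|=o(n)$ for $y\in W^u_f(x,\delta)$; this is precisely the estimate underlying the proof that $h^u_{\top}(f)=\chi_u(f)$ in \cite{HHW}, so the cleanest route is to cite \cite{HSX} (as the statement already does) for this lemma and simply record the split-Ruelle-inequality argument above as the proof.
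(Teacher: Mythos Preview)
Your proof has a genuine gap: the intermediate inequality
\[
\sum_{\text{unstable}} m_i(\mu)\lambda_i(\mu)\le \chi^u(f)
\]
is \emph{false} in general, so the split--Ruelle route cannot work as written. Here is a concrete counterexample. Let $f$ be a $C^\infty$ Anosov diffeomorphism of $\mathbb{T}^2$ topologically conjugate to a fixed linear automorphism $A$, so $E^c=\{0\}$ and $h_\top(f)=h_\top(A)$. By a local $C^\infty$ perturbation near a fixed point $q$ one can make the unstable eigenvalue $|Df|_{E^u}(q)|$ arbitrarily large, in particular strictly larger than $e^{h_\top(A)}$. For the ergodic measure $\mu=\delta_q$ one then has
\[
\sum_{\text{unstable}} m_i(\mu)\lambda_i(\mu)=\log|Df|_{E^u}(q)|>h_\top(A)=h_\top(f)=h^u_\top(f)=\chi^u(f),
\]
the last two equalities since there is no center. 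Thus your key step fails already in the simplest hyperbolic setting. The flaw in your heuristic is that the pointwise Jacobian growth $\frac1n\log|\det D_xf^n|_{E^u}|$ at a $\mu$-typical $x$ need not control the integrated volume growth $\frac1n\log\mathrm{Vol}(f^nW^u(x,\delta))$: the former sees only the orbit of $x$, while the latter averages over all $y\in W^u(x,\delta)$, whose orbits can (and in this example do) experience much smaller expansion.

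The paper does not supply its own proof of this lemma but defers to \cite{HSX}. The argument there is not Ruelle's inequality followed by a bound on the unstable exponents; rather, it estimates $h_\mu(f)$ directly through the growth rate of the minimal number of $(n,\epsilon)$-Bowen balls needed to cover a set of large $\mu$-measure (via Katok's entropy formula), and bounds that number by a product of two factors: the number of unstable plaques needed, whose growth rate is exactly $h^u_\top(f)=\chi^u(f)$, and the number of center-stable Bowen balls needed, whose growth rate on a Pesin block is controlled by $\sum_{\lambda_i^c(\mu)>0}m_i(\mu)\lambda_i^c(\mu)$. The point is that $\chi^u(f)$ enters as a bound on an \emph{entropy-type} quantity for the unstable foliation, not on the sum of unstable Lyapunov exponents.
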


\begin{proposition}\label{unstable}
If a $C^{1}$ PHD $f:M\to M$  has subexponential growth in the center direction, then
$h_{\top}(f)=h^u_{\top}(f)$.
\end{proposition}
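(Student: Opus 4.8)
The plan is to prove the two inequalities $h_{\top}(f)\le h^u_{\top}(f)$ and $h^u_{\top}(f)\le h_{\top}(f)$ separately, using the variational principles for both classical and unstable entropy. For the inequality $h^u_{\top}(f)\le h_{\top}(f)$, observe that for any $f$-invariant ergodic measure $\mu$ one always has $h^u_\mu(f)\le h_\mu(f)$ (the unstable metric entropy is a summand in the Ledrappier--Young-type decomposition, and in any case the defining partitions for $h^u_\mu$ are refinements of the type allowed in computing $h_\mu$), so taking the supremum over $\mu$ and applying the classical variational principle together with Theorem \ref{vp} gives $h^u_{\top}(f)=\sup_\mu h^u_\mu(f)\le \sup_\mu h_\mu(f)=h_{\top}(f)$. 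This direction is routine.

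The substantive direction is $h_{\top}(f)\le h^u_{\top}(f)$. First I would invoke the classical variational principle to choose, for $\eta>0$, an ergodic $\mu\in\mathcal{M}^e_f(M)$ with $h_\mu(f)> h_{\top}(f)-\eta$. Then I would apply Lemma \ref{unstablevolume} to $\mu$, obtaining
\begin{equation*}
h_\mu(f)\le \chi^u(f)+\sum_{\lambda_i^c(\mu)>0}m_i(\mu)\,\lambda_i^c(\mu).
\end{equation*}
Now the subexponential growth hypothesis in the center direction enters: for any $\e>0$ there is $C_\e>0$ with $\|D_xf^n|_{E_f^c(x)}\|\le C_\e e^{n\e}$ for all $n$ and all $x$. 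By the subadditive (or Oseledets) limit, every Lyapunov exponent $\lambda_i^c(\mu)$ in the center direction satisfies
\begin{equation*}
\lambda_i^c(\mu)=\lim_{n\to\infty}\frac1n\log\|D_xf^n|_{E_f^c(x)}v\|\le \lim_{n\to\infty}\frac1n\bigl(\log C_\e+n\e\bigr)=\e,
\end{equation*}
$\mu$-a.e., and since $\e>0$ is arbitrary this forces $\lambda_i^c(\mu)\le 0$ for every center exponent; hence the sum over positive center exponents is empty. Therefore $h_\mu(f)\le\chi^u(f)=h^u_{\top}(f)$, where the last equality is Theorem C of \cite{HHW} as recalled in the text. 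Combining, $h_{\top}(f)-\eta< h_\mu(f)\le h^u_{\top}(f)$, and letting $\eta\to 0$ yields $h_{\top}(f)\le h^u_{\top}(f)$.

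Putting the two inequalities together gives $h_{\top}(f)=h^u_{\top}(f)$. The only mildly delicate point — the ``main obstacle'' such as it is — is justifying that the center Lyapunov exponents are nonpositive directly from the uniform subexponential bound on $\|D_xf^n|_{E_f^c}\|$; this is where one must be careful that the bound is uniform in $x$, so that the conclusion holds for $\mu$-almost every point and for every invariant measure simultaneously, and that it controls all center exponents (not merely the top one) via the standard fact that the norm growth rate dominates each individual exponent. Everything else is a direct assembly of the variational principle, Lemma \ref{unstablevolume}, and the identification $\chi^u(f)=h^u_{\top}(f)$.
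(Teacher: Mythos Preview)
Your proof is correct and follows essentially the same approach as the paper: use Lemma \ref{unstablevolume} together with the observation that subexponential center growth forces all center Lyapunov exponents to be nonpositive, then invoke the variational principle and the identity $\chi_u(f)=h^u_{\top}(f)$. The paper is terser (it says the reverse inequality $h^u_{\top}(f)\le h_{\top}(f)$ is ``automatic'' and takes the supremum over ergodic $\nu$ directly rather than via an $\eta$-approximation), but the logical content is identical.
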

\begin{proof}
Let $\nu$ be any $f$-invariant and ergodic Borel probability measure on $M$. It is clear that the Lyapunov exponents of $\nu$ in the center direction are always nonpositive, since $f$ has subexponential growth in the center direction. Thus by Lemma \ref{unstablevolume} and the fact $h^u_{\text{top}}(f)=\chi_u(f)$, we have $h_\nu(f)\le h^u_{\text{top}}(f)$. Thus by the variational principle, we have
$$h_{\top}(f)=\sup_{\nu \in \mathcal{M}^e_{f}(M)}h_{\nu}(f)\le h^u_{\top}(f).$$
$h^u_{\top}(f)\leq h_{\top}(f)$ is automatic and therefore we finish the proof of the proposition.
\end{proof}

\begin{remark}
If $f:M\to M$ is a $C^r$-PHD, $r>1$, we can also obtain $h_\nu(f)=h^u_\nu(f)$ and $h^u_{\top}(f)= h_{\top}(f)$, by using Ledrappier-Young formula (cf. \cite{LY2}).
\end{remark}

\begin{remark}
A $C^1$ PHD $f:M\to M$ has subexponential growth in the center direction if and only if the Lyapunov exponents in the center direction are all nonpositive with respect to any $f$-invariant measure. The ``only if'' part is clear. For the ``if'' part, assume that for some $\e>0$, there exists a sequence of points $x_n\in M$ such that
$\|D_{x_n}f^n|_{E_f^c(x_n)}\|\ge e^{n\e}$
for $n\ge 0$. Then a subsequence of the measures
$$\mu_n:=\frac{1}{n}\sum_{i=0}^nf^i_*\delta_{x_n}$$
converges to an $f$-invariant measure $\mu$, which has some positive central Lyapunov exponents, so we get a contradiction.
\end{remark}

\section{Proof of Theorem A}
In this section we prove Theorem A. Recall that a $C^1$ PHD $f: M\to M$ has uniform exponential growth along the unstable foliation, if for any $\rho>0$ and any $\d>0$ small enough, there exists $N(\rho,\d)\in \NN$ such that for any $x\in M$, $f^{N(\rho, \d)}W^u_f(x,\d)$ contains at least $e^{N(\rho, \d)(h-3\rho)}$ disjoint sets of the form $\{W^u_f(z_i,2\d): z_i\in M\}$, where $h=h_\top^u(f)$.

Let $D^k$ denote the unit open disk in $\RR^k$ and $d=\dim M$. Let $\mathcal F$ be a continuous foliation of $M$ with $C^1$ leaves, then an \emph{$\mathcal F$-foliation box} is the image $B$ of a topological embedding $\Phi: D^{d-l}\times D^l\to M$
which sends each plaque $\Phi(\{x\}\times D^l)$ to a leaf of $\mathcal F$. Each $\Phi(x,\cdot): D^l\to M, y\mapsto \Phi(x,y)$ is a $C^1$ embedding depending continuously on $x$ in the $C^1$ topology. We write this $\mathcal F$-foliation box by $\{B, \Phi, D\}$ where $D=\Phi(D^{d-l}\times \{0\})$.

\begin{proposition}\label{ggrowth}
Let $f: M\to M$ be a $C^1$ PHD with uniform exponential growth along the unstable foliation. Then for any given $\rho>0$, there exists a $C^1$ open neighborhood $\mathcal{U}_{f}(\rho)$ of $f$ such that for any $g\in \mathcal{U}_{f}(\rho)$ and any $x\in M$,  $g^{N(\rho, \d)}W^u_g(x,\d)$ contains at least $e^{N(\rho, \d)(h-3\rho)}$ disjoint sets of the form $W^u_g(z_i,\d)$ with $z_i\in M$.
\end{proposition}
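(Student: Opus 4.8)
The plan is to take advantage of the fact that we iterate only for the \emph{fixed} finite time $N=N(\rho,\delta)$ supplied by the uniform exponential growth of $f$ (Definition \ref{ueg}): an arbitrarily small $C^1$-perturbation $g$ of $f$ displaces the relevant finite piece of unstable leaf only slightly, and the gap between radius $2\delta$ and radius $\delta$ in Definition \ref{ueg} is exactly what is needed to absorb that displacement. Note that both $h=h^u_\top(f)$ and the integer $N(\rho,\delta)$ are attached to $f$ alone, so Proposition \ref{unstableusc} plays no role here; I would fix once and for all a $\delta>0$ small enough for Definition \ref{ueg} to apply, and produce a neighborhood that may depend on $\rho$ and on this $\delta$.

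First I would record the following uniform continuity input. The unstable lamination of a $C^1$ PHD is a lamination with $C^1$ leaves which depends continuously on the diffeomorphism in the $C^1$ topology (cf.\ \cite{RRU}); since $N$ is fixed, $g\mapsto g^N$ is continuous from $C^1(M)$ to $C^1(M)$; and $M$ is compact. Combining these, for every $\eta>0$ there is a $C^1$-open neighborhood $\mathcal V=\mathcal V(\rho,\delta,\eta)$ of $f$ such that for every $g\in\mathcal V$ and every $x\in M$ there is a $C^1$ diffeomorphism
$$\Theta_x\colon f^NW^u_f(x,\delta)\longrightarrow g^NW^u_g(x,\delta)$$
which is $\eta$-bi-Lipschitz for the intrinsic leaf metrics (both denoted $d^u$), that is,
$$(1-\eta)\,d^u(p,q)\ \le\ d^u\!\big(\Theta_x(p),\Theta_x(q)\big)\ \le\ (1+\eta)\,d^u(p,q)$$
for all $p,q$ in the domain. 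The reason is that $W^u_g(x,\delta)$ is $C^1$-close to $W^u_f(x,\delta)$, and post-composing with the $C^1$-close diffeomorphisms $g^N$, resp.\ $f^N$, of $M$ keeps the two images $C^1$-close; realizing both as small $C^1$-graphs over $E^u_f(f^Nx)$ yields $\Theta_x$, and $C^1$-closeness of these graphs forces the bi-Lipschitz bounds on the induced metrics, with all estimates uniform in $x$ by compactness.

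Granting this, the rest is short. Choose $\mathcal V$ for $\eta=\tfrac12$ and set $\mathcal U_f(\rho):=\mathcal V$; fix $g\in\mathcal U_f(\rho)$ and $x\in M$. By hypothesis $f^NW^u_f(x,\delta)$ contains $m:=\lceil e^{N(h-3\rho)}\rceil$ pairwise disjoint balls $W^u_f(z_i,2\delta)$, $1\le i\le m$, all lying on the single leaf $W^u_f(f^Nx)$. Put $z_i':=\Theta_x(z_i)\in g^NW^u_g(x,\delta)\subset W^u_g(g^Nx)$. The sets $\Theta_x\big(W^u_f(z_i,2\delta)\big)\subset g^NW^u_g(x,\delta)$ are pairwise disjoint, and the lower bi-Lipschitz bound gives, for any $q$ with $d^u(z_i',q)<\delta$, that $d^u\big(z_i,\Theta_x^{-1}(q)\big)<(1-\eta)^{-1}\delta=2\delta$, hence $\Theta_x^{-1}(q)\in W^u_f(z_i,2\delta)$; therefore
$$W^u_g(z_i',\delta)\ \subset\ \Theta_x\big(W^u_f(z_i,2\delta)\big)\ \subset\ g^NW^u_g(x,\delta),\qquad 1\le i\le m.$$
These $m\ge e^{N(h-3\rho)}$ balls $W^u_g(z_i',\delta)$ are pairwise disjoint, which is exactly the assertion.

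The one step that really requires care is the uniform continuity claim of the second paragraph: that the unstable lamination of every $g$ sufficiently $C^1$-close to $f$ is $C^1$-close to that of $f$ \emph{uniformly over all base points}, and that the resulting small distortion of the ambient Riemannian metric descends to the intrinsic metrics of the leaves. This is where the quantitative Hirsch--Pugh--Shub theory of normally hyperbolic laminations (as collected in \cite{RRU}) together with the compactness of $M$ enters; everything else is the elementary bookkeeping above, and the factor $2$ built into Definition \ref{ueg} is present precisely to leave room for the perturbation.
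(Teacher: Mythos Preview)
Your argument is correct and follows essentially the same path as the paper: both rest on the $C^1$-continuity of the unstable lamination together with the fact that $N=N(\rho,\delta)$ is a \emph{fixed} finite iterate, the paper phrasing this via convergence of foliation-box charts while you package it as an explicit bi-Lipschitz identification $\Theta_x$ and make transparent why the factor $2$ in Definition~\ref{ueg} is exactly what absorbs the perturbation. One small imprecision worth noting: the images $f^NW^u_f(x,\delta)$ and $g^NW^u_g(x,\delta)$ are \emph{large} (expanded) pieces of leaf and cannot literally be written as small graphs over the single tangent plane $E^u_f(f^Nx)$; the clean fix is to set $\Theta_x:=g^N\circ\theta_x\circ f^{-N}$, where $\theta_x$ is the local graph identification of the genuinely small disks $W^u_f(x,\delta)$ and $W^u_g(x,\delta)$, and to read off the bi-Lipschitz bound from $D\theta_x$ close to the identity and $Dg^N$ uniformly $C^0$-close to $Df^N$ (valid since $N$ is fixed).
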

\begin{proof}
If $f_n \to f$ in the $C^1$ topology, then the foliations $W^u_{f_n}$ converge to $W_f^u$ in the following sense:
\begin{enumerate}
  \item For each $n$, there exist finite covers of $M$ by $W^u_{f_n}$- and $W^u_f$-foliation boxes
$$\{B^n_i, \Phi^n_i, D_i\}^k_{i=1} \quad \text{and\quad }\{B_i, \Phi_i, D_i\}^k_{i=1}$$
respectively;
  \item for each $1\le i\le k$, the topological embeddings
  $$\Phi^n_i: D^{d-l}\times D^l\to M$$
converge uniformly to $\Phi_i$ in the $C^0$ topology as $n\to \infty$;
  \item for every $1\le i\le k$ and $x\in D_i, \Phi_i^n(x, \cdot): D^l\to M$ defined by
$$y\mapsto \Phi_i^n(x,y)$$
are $C^1$ embeddings converging to $\Phi_i(x, \cdot)$ in the $C^1$ topology as $n\to \infty$.
\end{enumerate}

Now $f$ has uniform exponential growth along the unstable foliation. Without loss of generality, one can choose $\d>0$ to be much smaller than the Lebesgue number of the open cover $\{B_i\}_{i=1}^k$ of $M$. Since $N(\rho, \d)$ is already fixed, the conclusion of the proposition follows from the above continuity of unstable foliations and the uniform exponential growth of $f$ along the unstable foliation.
\end{proof}

\begin{proof}[Proof of Theorem A]
Assume that $f$ has uniform exponential growth along the unstable foliation. By Proposition \ref{ggrowth}, for any $\rho>0$, there exists a $C^1$ open neighborhood $\mathcal{U}_{f}(\rho)$ of $f$ such that for any $g\in \mathcal{U}_{f}(\rho)$, and for any $x\in M$,  $g^{N(\rho, \d)}W^u_g(x,\d)$ contains at least $e^{N(\rho, \d)(h-3\rho)}$ disjoint sets of the form $W^u_g(z_i,\d)$ with $z_i\in M.$ By induction, for any $l\in \NN$, $g^{lN(\rho, \d)}W^u_g(x,\d)$ contains at least $e^{lN(\rho, \d)(h-3\rho)}$ disjoint sets of the form $W^u_g(w_i,\d)$ with $w_i\in M.$ Then $\{g^{-lN(\rho, \d)}w_i\}$ is a $(lN(\rho, \d), \d)$ u-separated subset of $W^u_g(x,\d)$. So
$$N^u(g,lN(\rho, \d),\d, x_0,\d)\ge e^{lN(\rho, \d)(h-3\rho)},$$
which implies
\begin{equation}\label{e:lower}
h_{\top}(g)\ge h^u_{\top}(g)\ge h-3\rho.
\end{equation}
This gives the lower semicontinuity in $C^1$ topology of the function $h^u_{\top}(\cdot)$ at $f$.
The upper semicontinuity follows from Proposition \ref{unstableusc} and hence $h^u_{\top}: C^1(M)\to \RR^+\cup\{0\}$ is continuous at $f$. This proves Theorem A(1).

If $f$ also has subexponential growth in the center direction, then $h_{\top}(f)=h^u_{\top}(f)$ by Proposition \ref{unstable}. So the topological entropy function $h_{\top}: C^1(M)\to \RR^+\cup\{0\}$ is lower semicontinuous at $f$ by \eqref{e:lower}.
If in addition $f$ is $C^\infty$, the upper semicontinuity of $h_{\top}:C^\infty(M)\to \RR^+\cup\{0\}$ is a classical result (cf. for example \cite{New}). Therefore, $h_{\top}: C^\infty(M)\to \RR^+\cup\{0\}$ is continuous at $f$. The proof of Theorem A is complete.
\end{proof}

\section{Lyapunov stable PHDs}
Recall that a PHD is called Lyapunov stable (in center direction) if given
any $\e> 0$, one can find $\d> 0$ such that for any $C^1$-arc $\gamma$ tangent to $E^c$ with length
less than $\d$, the curves $f^n\gamma, n\in \mathbb{N}$ all have length less than $\e$.
If $f$ is Lyapunov stable then $E^c\oplus E^s$ integrates to a unique $f$-invariant foliation $W^{cs}$ (cf. \cite{RRU0}).

A $C^1$ PHD $f:M\to M$ is said to have bounded expansion in the center direction if there exists $K>1$ such that
$$\|Df^n(x)|_{E_f^c(x)}\|< K, \quad \forall n\in \mathbb{N}.$$
Bounded expansion implies both Lyapunov stability and subexponential growth in the center direction.

\subsection{Proof of Theorem B}
We will show that a topologically transitive and Lyapunov stable $C^1$ PHD $f:M\to M$ has uniform exponential growth along unstable manifolds, and hence Theorem A(1) applies. If further, $f$ has bounded expansion in the center direction, then $f$ has subexponential growth in center direction and Theorem A(2) applies.

The following property is a version of effective density of the unstable foliation. 
%It is essentially proved for PHDs with bounded expansion in the center direction in . For completeness, we give a proof for Lyapunov stable PHDs.
\begin{lemma}\label{uniformtran}(Effective density, cf. \cite[Lemma 6.2]{CPZ})
Let $f:M\to M$ be a $C^1$ PHD which is topologically transitive and Lyapunov stable. Then given any $\d>0$, there is $n\in \NN$ such that for every $x, y\in M$, there is $0\leq k\leq n$ such that
$$f^k(W_f^u(x,\d))\cap W_f^{cs}(y,\d)\neq \emptyset.$$
\end{lemma}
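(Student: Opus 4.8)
The plan is to derive this uniform, finite-time density statement from plain topological transitivity together with Lyapunov stability, by first establishing a non-uniform (pointwise) version and then upgrading it to a uniform one by a compactness argument. First I would fix $\d>0$ and use Lyapunov stability: there is $\d'>0$ (depending on $\d$) such that any $C^1$-arc tangent to $E^c$ of length less than $\d'$ has all forward iterates of length less than $\d/2$; I also shrink $\d'$ so that the local $W^{cs}$-plaque $W^{cs}_f(y,\d')$ is, for every $y$, contained in a $\d/2$-neighborhood of $y$ along $W^{cs}$, and so that local product structure holds at scale $\d'$. Here one uses that Lyapunov stability forces $E^c\oplus E^s$ to integrate to the $f$-invariant foliation $W^{cs}$, and that $f$ contracts $W^{cs}$-plaques in the appropriate sense: iterating a small $cs$-plaque keeps it small (the center arcs stay short by Lyapunov stability, the stable direction contracts), so $f^k(W^{cs}_f(y,\d'))\subset W^{cs}_f(f^k y,\d)$ for all $k\ge 0$.

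Next I would exploit topological transitivity. Pick a point $z$ with dense forward orbit. For any $x\in M$ the local unstable disk $W^u_f(x,\d')$ is an honest open $(d-l)$-dimensional disk, and since $z$ has dense orbit, some forward iterate of $z$ — equivalently, applying $f^{-m}$, some backward-translated picture — meets it; more usefully, I would run the standard ``inclination/$\lambda$-lemma'' style argument in the topological category: because transitivity gives, for each ordered pair of small balls $(U,V)$, an $N$ with $f^N U\cap V\ne\emptyset$, and because the family of local unstable plaques through points of a fixed foliation box is compact, one gets for each pair of foliation-box indices a single transfer time that works for all plaques in that box. Concretely: cover $M$ by finitely many product-structure charts at scale $\d'$; for each pair of charts, transitivity plus the openness of $W^u_f(x,\d')$ (it contains a genuine open set of the chart transversal? — no: it is a disk, but its forward iterates expand) yields a time $n_{ij}$ such that $f^{n_{ij}}$ maps every $W^u$-plaque in chart $i$ across chart $j$ in the unstable direction. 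Here the unstable expansion of $f$ is what makes a thin disk eventually ``spread'' enough to be caught by a transitivity return; combined with local product structure, $f^{n_{ij}}(W^u_f(x,\d'))$ then intersects $W^{cs}_f(y,\d')$ for every $y$ in chart $j$.

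Then I would assemble: set $n=\max_{i,j} n_{ij}$ over the finitely many chart pairs. Given arbitrary $x,y\in M$, let $i$ be the index of a chart containing $W^u_f(x,\d')$ and $j$ one containing $y$ with $W^{cs}_f(y,\d')$ inside it; take $k=n_{ij}\le n$. By the previous paragraph $f^k(W^u_f(x,\d'))\cap W^{cs}_f(y,\d')\ne\emptyset$, and since $\d'\le\d$ this gives $f^k(W^u_f(x,\d))\cap W^{cs}_f(y,\d)\ne\emptyset$, as required. I expect the main obstacle to be the rigorous version of the ``$\lambda$-lemma in the topological/$C^1$ setting'' step — namely showing that a short unstable disk, under forward iteration, genuinely crosses a fixed-size product box in the unstable direction, uniformly in the basepoint, using only $C^1$ partial hyperbolicity and transitivity (no smoothness, no absolute continuity). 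This is where one must be careful that the unstable foliation of $f$ varies continuously with the basepoint (so that plaques in a box form a compact family), that forward iterates of unstable plaques are again unstable plaques with controlled geometry, and that the transitivity return times can be chosen uniformly over such a compact family — which is exactly the content one would quote from \cite[Lemma 6.2]{CPZ} if a self-contained argument becomes too long.
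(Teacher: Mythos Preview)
The paper does not actually prove this lemma; it simply cites \cite[Lemma 6.2]{CPZ}. So there is no ``paper's own proof'' to compare against, and your proposal is really a sketch of how one would reproduce the cited argument.

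Your ingredients are the right ones --- Lyapunov stability to control forward iterates of $cs$-plaques, topological transitivity, and a compactness argument for uniformity --- but the key step is not carried out. In your second paragraph you try to apply transitivity directly to the unstable disk $W^u_f(x,\delta')$, notice mid-sentence that it is not open in $M$, and then fall back on an unspecified ``$\lambda$-lemma in the $C^1$ setting'' which you yourself flag as the main obstacle. You never actually use the Lyapunov-stability control you carefully set up in the first paragraph. That control is precisely what closes the gap: form the open $cs$-thickening $U_x=\bigcup_{p\in W^u_f(x,\delta/2)}W^{cs}_f(p,\delta')$, apply transitivity to the pair of open sets $U_x$ and a small ball $V$ about $y$ to get $f^k(U_x)\cap V\neq\emptyset$, and then observe that by your choice of $\delta'$ one has $f^k(U_x)\subset \bigcup_{p}W^{cs}_f(f^k(p),\delta/2)$, so some $f^k(p)\in f^k(W^u_f(x,\delta/2))$ lies $cs$-close to a point near $y$. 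Since $f^k(W^u_f(x,\delta))\supset W^u_f(f^k(p),\delta/2)$ by uniform unstable expansion, local product structure forces this $u$-ball to cross $W^{cs}_f(y,\delta)$. This is the substance of the CPZ argument; no inclination lemma is needed.

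Your uniformisation step also needs adjustment: rather than looking for a \emph{single} time $n_{ij}$ per chart pair (which is stronger than the statement and not what the argument gives), note that for each $(x,y)$ the set of $(x',y')$ for which some $k\le n$ works is open in $M\times M$, so compactness of $M\times M$ yields the uniform bound directly.
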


\begin{proposition}\label{ueg0}
Let $f:M\to M$ be a $C^1$ PHD which is topologically transitive and Lyapunov stable. Then $f$ has uniform exponential growth along unstable foliation.
\end{proposition}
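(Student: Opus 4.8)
The plan is to combine the effective density provided by Lemma~\ref{uniformtran} with the standard fact that unstable topological entropy can be realized by the exponential growth rate of the number of disjoint $\d$-balls inside an iterated unstable ball, and then to use Lyapunov stability to transfer information from $W^{cs}$-plaques back to genuine unstable balls. First I would fix $h = h^u_\top(f)$ and $\rho > 0$. By Remark~\ref{unstableentropy} and the definition of $h^u_\top(f)$, for $\d>0$ small enough there is some $x_0 \in M$ with $h_\top(f, \overline{W_f^u(x_0,\d)}) > h - \rho$; unwinding the definition of this leaf entropy via $(n,\e)$ u-separated sets, one gets for a suitable small scale (comparable to $\d$) and for infinitely many — hence, by a submultiplicativity/Fekete argument along the unstable leaf, eventually all — large $N$ a $(N,\d)$ u-separated subset of $\overline{W_f^u(x_0,\d)}$ of cardinality at least $e^{N(h-2\rho)}$. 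Pushing this set forward by $f^N$ and discarding nearby points, $f^N W_f^u(x_0,\d)$ contains at least $\sim e^{N(h-2\rho)}$ points that are pairwise $d^u$-distance $\geq \d$ apart; around each, $f^N W_f^u(x_0,\d)$ contains an honest sub-ball $W_f^u(z_i, c\d)$ for a uniform $c$, and a fixed fraction of these are pairwise disjoint, giving of order $e^{N(h-3\rho)}$ disjoint unstable $2\d$-balls inside $f^N W_f^u(x_0,\d)$ for all large $N$.

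The second step is to remove the dependence on the special basepoint $x_0$, which is exactly where effective density enters. Given an arbitrary $x \in M$, apply Lemma~\ref{uniformtran}: there is a uniform $n_0 = n_0(\d)$ and some $0 \le k \le n_0$ with $f^k(W_f^u(x,\d)) \cap W_f^{cs}(x_0, \d') \neq \emptyset$ for a small auxiliary radius $\d'$. Since $W^{cs}$ is subfoliated by unstable leaves (it contains $W^u$), a point of $f^k W_f^u(x,\d)$ lying in $W_f^{cs}(x_0,\d')$ has an unstable plaque through it that, by Lyapunov stability of $f^{-1}$ on the stable direction together with the local product structure of $W^{cs}$, stays within $W_f^u(x_0, \d)$ after a further bounded iterate — more carefully, one uses that holonomy along $W^s$ inside $W^{cs}$ between nearby center leaves is uniformly continuous, and that center arcs of small length stay small under all forward iterates (Lyapunov stability), so a definite-size unstable sub-plaque of $f^{k}W_f^u(x,\d)$ lands inside $W_f^u(x_0,\d)$ after at most $n_1(\d)$ more steps, all bounds uniform in $x$. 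Concatenating, with $N' = k + n_1 + N \le n_0 + n_1 + N$, the set $f^{N'}W_f^u(x,\d) \supseteq f^{N}\big(\text{a }\d\text{-ball inside }W_f^u(x_0,\d)\big)$ contains at least $e^{N(h-3\rho)}$ disjoint unstable $2\d$-balls. Since $n_0, n_1$ are fixed while $N$ is free, we may absorb the additive constant into the exponent (shrink $\rho$ slightly, or enlarge $N$) and conclude: there is $N(\rho,\d)$ such that for every $x\in M$, $f^{N(\rho,\d)}W_f^u(x,\d)$ contains at least $e^{N(\rho,\d)(h-3\rho)}$ disjoint sets $W_f^u(z_i,2\d)$, which is precisely Definition~\ref{ueg}.

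The main obstacle I anticipate is the transfer step in the previous paragraph: passing from ``$f^k W_f^u(x,\d)$ meets the center-stable plaque of $x_0$'' to ``a definite-size unstable plaque of some forward iterate sits inside $W_f^u(x_0,\d)$,'' uniformly in $x$. This requires controlling (i) the $W^s$-holonomy inside $W^{cs}$ and (ii) the forward distortion along $W^c$, and the only tool available for (ii) is Lyapunov stability, which controls lengths of center arcs but not, a priori, the metric distortion of unstable plaques transported across center leaves. The right way to handle this is to work with $W^{cs}$-foliation boxes of uniform size: inside such a box, $W^{cs}$ is a product of a $W^u$-plaque and a $W^c$-plaque (up to uniformly bi-Lipschitz charts), forward iterates contract $W^s$ and keep $W^c$-plaques of bounded size (Lyapunov stability), so after boundedly many steps the image unstable plaque is $C^0$-close to lying in a single unstable leaf; a final application of the graph transform / continuity of $W^u$ within the box upgrades this to genuine containment of a slightly smaller unstable ball. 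The remaining steps — the Fekete argument to get the separated-set bound for *all* large $N$, and the packing argument to extract disjoint balls — are routine. Finally, once uniform exponential growth along the unstable foliation is established, Theorem~A(1) gives continuity of $h^u_\top$ at $f$, and when $f$ additionally has bounded expansion in the center direction it has subexponential center growth, so Theorem~A(2) gives lower semicontinuity of $h_\top$; this yields Theorem~B.
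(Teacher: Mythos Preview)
Your broad strategy---realize $h^u_\top(f)$ at a single basepoint $x_0$, then use effective density (Lemma~\ref{uniformtran}) to reach an arbitrary $x$---matches the paper's. The transfer step, however, contains a geometric error. You claim that a point $y \in f^k W^u_f(x,\d) \cap W^{cs}_f(x_0,\d')$ has an unstable plaque which, after a further bounded iterate, ``stays within $W^u_f(x_0,\d)$'', and then conclude $f^{N'}W^u_f(x,\d) \supseteq f^N(\text{a }\d\text{-ball inside } W^u_f(x_0,\d))$. This is impossible: $y$ lies on the leaf $W^u(f^k x)$, and since $y \in W^{cs}(x_0)$ the leaves $W^u(y)$ and $W^u(x_0)$ are distinct and remain so under every iterate; no forward image of $W^u_f(x,\d)$ can contain a piece of $W^u(x_0)$. (Relatedly, the assertion ``$W^{cs}$ is subfoliated by unstable leaves'' is false---$W^u$ is transverse to $W^{cs}$.) Your proposed repair via graph transform or $C^0$-closeness cannot manufacture genuine containment between distinct unstable leaves.

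The paper runs the transfer in the opposite direction. Once effective density and a preliminary $N_3$ iterates give $f^{k+N_3}W^u_f(x,\d) \supset W^u_f(y,c_2)$ for some $y \in W^{cs}_f(x_0,\d)$, one uses the $cs$-holonomy $H^{cs}_{x_0,y}\colon \overline{W^u_f(x_0,\d)} \to W^u_f(y,c_2)$ to carry the $(n_\rho,\e)$ u-separated set $S$ over to $S' \subset f^{k+N_3}W^u_f(x,\d)$. Lyapunov stability enters not to merge leaves but to ensure that $d^{cs}(f^{n_\rho}x_i, f^{n_\rho}y_i)$ remains small for $y_i = H^{cs}_{x_0,y}(x_i)$, so that the disjoint family $\{W^u_f(f^{n_\rho}x_i,\e/2)\}$ pushes across via a second $cs$-holonomy to a disjoint family $\{W^u_f(f^{n_\rho}y_i,c_3)\}$ inside the correct leaf $f^{n_\rho}W^u_f(y,c_2)$; a final $N_3$ iterates inflate these to radius $2\d$. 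Incidentally, the Fekete argument you propose is unnecessary: Definition~\ref{ueg} asks only for a single $N(\rho,\d)$, so one sufficiently large $n_\rho$ with a good separated set suffices.
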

\begin{proof}
Let $h=h_\top^u(f)$ and $\d_0>0$ be small enough to be specified later. Recall the definition of unstable topological entropy and Remark \ref{unstableentropy}.
For any $\rho>0$, we can find $x_0\in M$ and $0<\d\ll \d_0$ small enough such that
$$h_{\top}(f, \overline{W^u_f(x_0,\d)})>h-\rho,$$
and if $d^{cs}(x,y)< \d$ then $d^{cs}(f^nx,f^ny)\ll \d_0/2$ by Lyapunov stability of $f$.
Let $0<\e\ll \d$ be small enough and $N_1\in \NN$ large enough such that if $n\ge N_1$, then
there exists an $(n,\e)$ u-separated subset $S$ of $\overline{W^u_f(x_0,\d)}$ such that $\#S=N^u(f,n,\e, x_0,\d)>e^{n(h-2\rho)}$.

Since $f$ is Lyapunov stable, the center-stable foliation $W_f^{cs}$ exists and it is continuous. Shrinking $\d_0$ if necessary, for any $y\in W^{cs}_f(x,\d_0)$ and any $0<\d<\d_0$, the $cs$-holonomy map $H_{x,y}^{cs}$ is well defined, and such that
$$H^{cs}_{x,y}(\overline{W^u_f(x,\d)})\subset W^u_f(y,c_2(\d))$$
and
\begin{equation*}
H^{cs}_{x,y}(W^u_f(x,\e/2))\supset W^u_f(y,c_3(\e,\d))
\end{equation*}
for some constants $c_2=c_2(\d)>2\d$ and $c_3=c_3(\e,\d)>0$.
Then take $N_3\in \NN$ large enough such that for any $n\ge N_3$ and any $x\in M$:
\begin{enumerate}
  \item $f^n(W^u_f(x,c_3))\supset W^u_f(f^n(x),2\d)$;
  \item $f^n(W^u_f(x,\d))\supset W^u_f(f^n(x),2c_2)$.
\end{enumerate}

Now take any $x\in M$. Applying Lemma \ref{uniformtran} with $x=f^{N_3}x$ and $y=x_0$, there exists $N_2\in \NN$ depending only on $\d$ such that
$$f^k(W_f^u(f^{N_3}x,\d))\cap W_f^{cs}(x_0,\d)\neq \emptyset$$
for some $0\leq k\le N_2$. Then $f^{k+N_3}(W_f^u(x,\d))\cap W_f^{cs}(x_0,\d)\neq \emptyset$. Let
\begin{equation*}\label{e:intersection0}
y\in f^{k+N_3}(W_f^u(x,\d))\cap W_f^{cs}(x_0,\d),
\end{equation*}
then we have $f^{k+N_3}(W_f^u(x,\d))\supset W^u_f(y,c_2)$ by $c_2>2\d$ and the choice of $N_3$. So
$$H_{x_0,y}^{cs}(\overline{W^u_f(x_0,\d)})\subset f^{k+N_3}(W_f^u(x,\d)).$$
Note that $k$ is depending on $x,x_0$, but we shall use $N_2$ instead in the definition of $N(\rho,\d)$ later.

Finally, we take $n_\rho\in \NN$ large enough such that $n_\rho>\max\{N_1, N_2, N_3\}$ and
$$\frac{n_\rho}{2N_3+N_2+n_\rho}>\frac{h-3\rho}{h-2\rho}.$$

The cs-holonomy map
$$H^{cs}_{x_0,y}: \overline{W^u_f(x_0,\d)}\to W^u_f(y,c_2)\subset f^{k+N_3}(W_f^u(x,\d))$$
is well defined by the choice of $N_3$ and $c_2$.
Let $S$ be an $(n_\rho,\e)$ u-separated subset of $\overline{W^u_f(x_0,\d)}$ of maximal cardinality. Denote $S'=H^{cs}_{x_0,y}(S)$.
Since $W^u$ and $W^{cs}$ are uniformly transverse foliations, by shrinking $\d_0>0$ if necessary, there exists some constant $c_0>1$ such that
$$d_f^{cs}(x_i,y_i)\le c_0\d$$
for any $x_i\in S$ and $y_i=H^{cs}_{x_0,y}(x_i)\in S'$.

Since $S$ is $(n_\rho,\e)$ u-separated, we know that
$$\{W_f^u(f^{n_\rho}x_i, \e/2): x_i\in S\}$$
are disjoint balls contained in $f^{n_\rho}W^u_f(x_0,\d)$.
Recall that by Lyapunov stability of $f$, $0<\d\ll \d_0$ is chosen such that
$$d^{cs}(f^{n_\rho}x_i,f^{n_\rho}y_i)< \d_0/2.$$
Then by the definition of $c_3=c_3(\e,\d)$ we have
$$H^{cs}_{f^{n_\rho}x_i,f^{n_\rho}y_i}(W_f^u(f^{n_\rho}x_i, \e/2))\supset W^u(f^{n_\rho}y_i, c_3).$$
Moreover, $\{W_f^u(f^{n_\rho}y_i, c_3): y_i\in S'\}$
are disjoint sets contained in $f^{n_\rho}W^u_f(y,c_2)$.

By the choice of $N_3$,
$$f^{N_3}(W^u_f(f^{n_\rho}y_i,c_3)\supset W^u_f(f^{N_3+n_\rho}y_i,2\d).$$
It follows that
$$\{W^u_f(f^{N_3+n_\rho}y_i,2\d): y_i\in S'\}$$
are disjoint sets contained in $f^{N_3+n_\rho}W^u_f(y,c_2)\subset f^{n_\rho+k+2N_3}W^u_f(x,\d)$.
 Since $\#S'=\#S>e^{n_\rho(h-2\rho)}$, we know
that $f^{n_\rho+k+2N_3}W^u_f(x,\d)$ contains at least $e^{n_\rho(h-2\rho)}$ disjoint sets of the form $\{W^u_f(z_i,2\d): z_i=f^{N_3+n_\rho}y_i\in M\}.$
Then it is easy to see that $f^{N(\rho,\d)}W^u_f(x,\d)$ where $N(\rho,\d)=n_\rho+N_2+2N_3$ contains at least $e^{n_\rho(h-2\rho)}>e^{N(\rho,\d)(h-3\rho)}$ disjoint sets of the form
$$\{W^u_f(w_i,2\d): w_i\in M\}.$$
Thus $f$ has uniform exponential growth along unstable foliation.
\end{proof}

\begin{proof}[Proof of Theorem B]
Theorem B is an immediate consequence of Proposition \ref{ueg0} and Theorem A.
\end{proof}

\subsection{Examples}
\subsubsection{Time-one maps of frame flows}
Let $M$ be a closed oriented $n$-dimensional manifold of negative sectional curvature.
Consider the unit tangent bundle $SM=\{(x,v): x\in M, v\in T_xM, \|v\|=1\}$, and the frame bundle
\begin{equation*}
\begin{aligned}
FM=\{&(x, v_0, v_1, \cdots, v_{n-1}): x\in M, v_i\in T_xM, \\
&\{v_0, \cdots, v_{n-1}\} \text{\ is a positively oriented orthonormal frame at\ } x\}.
\end{aligned}
\end{equation*}
The geodesic flow $g^t:SM\to SM$ is defined as
$$g^t(x,v)=(\gamma_{(x,v)}(t),\dot{\gamma}_{(x,v)}(t))$$
where $\gamma_{(x,v)}(t)$ is the unique geodesic determined by the vector $(x,v)$. The frame flow $f^t:FM\to FM$ is defined by
$$f^t(x, v_0, v_1, \cdots, v_{n-1})= (g^t(x,v_0), \Gamma_\gamma^t(v_1),\cdots, \Gamma_\gamma^t(v_{n-1}))$$
where $\Gamma_\gamma^t$ is the parallel transport along the geodesic $\gamma_{(x,v_0)}(t)$.
There is a natural fiber bundle $\pi: FM\to SM$ that takes a frame to its first vector, and then each fiber can be identified with $SO(n-1)$. We have $\pi \circ f^t=g^t\circ \pi$ and $f^t$ acts isometrically along the fibers.

Let $f=f^1$ be the time-one map of the frame flow $f^t$. Then $f$ is a PHD with splitting $TFM = E_f^s \oplus E_f^c \oplus E_f^u$, where $\dim E_f^c = 1+\dim SO(n-1)$ and $f$ acts isometrically on the center bundle. If in addition $f$ is topologically transitive, then Theorem B applies to $f$. The following are the cases when $f$ is known to be ergodic with respect to the normalized volume measure on $FM$, and hence topologically transitive.
\begin{proposition}(Cf. Theorem 0.2 in \cite{BP})
Let $f^t$ be the frame flow on an $n$-dimensional compact
smooth Riemannian manifold with sectional curvature between $-\Lambda^2$ and $-\lambda^2$ for some
$\Lambda, \lambda>0$. Then in each of the following cases the flow and its time-one map are ergodic:
\begin{itemize}
  \item if the curvature is constant,
  \item for a set of metrics of negative curvature which is open and dense in the $C^3$ topology,
  \item if $n$ is odd and $n\neq 7$,
  \item if $n$ is even, $n\neq 8$, and $\lambda/\Lambda>0.93$,
  \item if $n=7$ or $8$ and $\lambda/\Lambda>0.99023...$.
\end{itemize}
\end{proposition}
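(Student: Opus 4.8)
The plan is to treat this statement as essentially a citation together with one routine reduction. The ergodicity of the frame flow $f^t$ with respect to the normalized volume on $FM$, in each of the five listed cases, is exactly the content of \cite[Theorem 0.2]{BP}, so I would simply invoke it. For orientation I would recall the mechanism behind \cite{BP}: ergodicity of $f^t$ is governed by Brin's transitivity group of the $SO(n-1)$-extension $\pi\colon FM\to SM$ over the geodesic flow $g^t$ (which is itself Bernoulli, hence ergodic and mixing, for a compact negatively curved manifold), and ergodicity of $f^t$ is equivalent to this group being all of $SO(n-1)$. Each curvature/dimension hypothesis — constant curvature, the open–dense condition in the $C^3$ topology, odd $n\neq 7$, and the pinching bounds $0.93$ and $0.99023\ldots$ in the remaining even and exceptional cases — is precisely what forces the transitivity group to fill out $SO(n-1)$. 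This is the deep input, which I would take as a black box.

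The one point that genuinely needs to be supplied is the passage from ergodicity of the flow to ergodicity of the time-one map $f=f^1$. Here I would use that an ergodic frame flow over the geodesic flow of a compact negatively curved manifold is in fact Bernoulli (Brin; cf. \cite{BP} and the references therein), in particular mixing, and therefore admits no nonconstant $L^2$-eigenfunction. Then $f^1$ is ergodic: if $\varphi\in L^2(FM)$ satisfies $\varphi\circ f^1=\varphi$, realize $FM$ as the suspension of $f^1$ and expand $\varphi$ in the circle direction; mixing of the flow rules out eigenvalues in $2\pi\mathbb{Z}\setminus\{0\}$, so every nonzero Fourier mode vanishes, $\varphi$ is flow-invariant, hence constant. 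The same argument in fact gives that $f^1$ is weakly mixing. Since this reduction is standard, the proposition follows.

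Thus the only (minor) obstacle is this flow-to-time-one reduction, i.e. excluding eigenvalues of $f^t$ in $2\pi\mathbb{Z}$, which is exactly where mixing (equivalently the Bernoulli property) of the ergodic frame flow is used; everything else is a direct appeal to \cite{BP}. Finally, for the application in the surrounding text I would record that the normalized volume has full support on $FM$, so ergodicity of $f^1$ yields topological transitivity, and that $f^1$ acts by isometries on $E^c_f$, hence has bounded — indeed unit — expansion in the center direction, so that $f^1$ has subexponential growth in the center direction and is Lyapunov stable; Theorem B then applies, giving continuity of $h^u_{\top}$ and lower semicontinuity of $h_{\top}$ at $f^1$.
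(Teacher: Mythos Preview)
Your proposal is correct, and in fact it supplies more than the paper does: the paper gives no proof of this proposition at all, treating it purely as a citation of \cite[Theorem~0.2]{BP} (hence the ``Cf.'' in the statement). Your added reduction from ergodicity of the flow $f^t$ to ergodicity of the time-one map $f^1$ via the Bernoulli/mixing property of the frame flow is the right way to justify the ``and its time-one map'' clause, and your closing paragraph correctly identifies how the proposition feeds into Theorem~B (topological transitivity from ergodicity with respect to a fully supported measure, isometric action on $E^c_f$ giving bounded expansion). None of this appears in the paper; it simply invokes \cite{BP} and moves on.
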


\subsubsection{Semisimple translations on homogeneous spaces}
In this example, we consider a special class of left translations on homogenous spaces. Let $G$ be a connected semisimple Lie group, $\Gamma$ an irreducible lattice in $G$, and $g\in G$. We say that $g$ is semisimple, if $\Ad g|_{\fh^0_\CC}$ is diagonalizable over $\CC$, where $\fh^0_\CC:=\bigoplus_{|\lambda|= 1} E_\lambda$ (see \eqref{e:spaces} in Section 5.2). If $\Ad g$ is non-quasiunipotent, then $T_g$ is a $C^\infty$ PHD with bounded expansion in center direction. In fact, $T_g$ acts isometrically in the center direction. Moreover, $T_g$ is ergodic (in fact, Kolmogorov by \cite{Dan1,Dan2} and Bernoulli by \cite{Kan}) with respect to the Haar measure on $\ggm$, hence it is topologically transitive. Thus Theorem B applies to $T_g$.

\subsubsection{Time-one maps of geodesic flows on symmetric spaces of noncompact type}
If the symmetric space of noncompact type has rank one, then the geodesic flow is an Anosov flow. In this case, the topological entropy is continuous at the time-one maps of geodesic flows (and their perturbations) in the $C^1$ topology (\cite{Hua, HZ, SY}).

Now we consider symmetric spaces of noncompact type with rank $\ge 2$. Let $(X, g)$ be a simply connected Riemannian symmetric space, then $X=G/K$, where $G$ is the connected component of the isometry group of $X$ which acts transitively on $X$, $K$ is the isotropy group of any point which is compact. $X$ is called a symmetric space of noncompact type if $G$ is semisimple with no compact factors and $K$ is a maximal compact subgroup of $G$. Assume $\text{rank}(X)\ge 2$. Let $\Gamma \subset G$ be a cocompact
lattice, then $M = \Gamma\setminus X$ is a compact Riemannian manifold. Consider the geodesic flow $g^t:SM\to SM$ on the unit tangent bundle of $M$
and its time-one map $g=g^1$.  Then $g$ is a $C^\infty$ PHD with higher dimensional center. We note that the center direction has polynomial orbit growth and $g^t$ is not ergodic with respect to the Liouville measure $m$ on $SM$.

For a fixed $p\in X$, choose a Weyl chamber $W\subset S_pX$. Then for each $v$ in the interior of $W$ the orbit $Gv$ constitutes a closed set invariant under the geodesic flow. The geodesic flow restricted to the submanifold $SM_v:= \Gamma\setminus Gv \subset SM$ is mixing with respect to the probability measure $m^v$ induced by the Liouville measure $m$ on $SM$. Moreover, there exists a unique $v_{\text{max}}$ such that $h_\top(g^t)=h_\top(g^t, SM_{v_{\text{max}}})$ and $g$ restricted to $SM_{v_{\text{max}}}$ is a $C^\infty$ PHD. See \cite{Kni} for more details. At any point in $SM_{v_{\text{max}}}$, the unstable manifolds of $g|SM_{v_{\text{max}}}$ coincides with the unstable manifolds of $g$. However, $g|SM_{v_{\text{max}}}$ is non-expanding (and hence of bounded expansion) in the center direction now, i.e.,
$$d^{cs}(g^n(x), g^n(y))\le d^{cs}(x,y)$$
for any $x,y$ on the same local center-unstable leaf (see Page 181 in \cite{Kni}). On the other hand, we know that $g^t$ is topologically transitive on $SM_{v_{\text{max}}}$ as it is ergodic with respect to $m^{v_{\text{max}}}$. One can check that Lemma \ref{uniformtran} holds for a real number $0\leq k\leq n$ (see the proof of Proposition \ref{ueg0}). Furthermore, the proof of Proposition \ref{ueg0} goes through with real $k$. Thus the conclusion of Theorem B applies to $g$.

\section{PHDs with subexponetial growth in the center direction}
In this section, we consider various PHDs with subexponential growth in the center direction. Recall that a $C^1$ PHD $f: M \to M$ has subexponential growth in the center direction if for any $\e>0$, there exists a constant $C_\e>0$ such that $\|D_xf^n|_{E_f^c(x)}\|\leq C_\e e^{n\e}$ for any $n\in \mathbb{N}$ and any $x\in M$. The following lemma is immediate.
\begin{lemma}\label{growth}
If $f: M \to M$ is a $C^1$ PHD with subexponential growth in the center direction, then for any $x\in M$, $\d,\e>0$ and $n\in \NN$, $$f^n(W_f^{cs}(x,C_\e^{-1} e^{-n\e}\d/2))\subset W^{cs}_f(f^nx,\d).$$
\end{lemma}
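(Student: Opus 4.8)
The plan is to exploit that $d^{cs}$ is, by definition, the intrinsic (path) metric induced on the leaves of $W^{cs}_f$ by the Riemannian metric of $M$, so that a bound on $d^{cs}(f^nx,f^ny)$ follows from a bound on the length of the $f^n$-image of a short curve joining $x$ to $y$ inside $W^{cs}_f(x)$. Such a length is in turn controlled by $\sup_{z\in M}\|D_zf^n|_{E^{cs}_f(z)}\|$, so the whole matter reduces to producing a uniform subexponential bound for the derivative of $f^n$ along the center-stable bundle.

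To obtain that bound, I would first note that $E^{cs}_f(z)=E^c_f(z)\oplus E^s_f(z)$, that $\|D_zf^n|_{E^s_f(z)}\|<1$ in the adapted metric, and that $\|D_zf^n|_{E^c_f(z)}\|\le C_\e e^{n\e}$ by the subexponential growth hypothesis. Since the bundles $E^c_f$ and $E^s_f$ are continuous and $M$ is compact, the angle between them is bounded below by some $\theta_0>0$; writing a unit vector $v\in E^{cs}_f(z)$ as $v=v^c+v^s$ with $v^\sigma\in E^\sigma_f(z)$ then gives $\|v^c\|,\|v^s\|\le(\sin\theta_0)^{-1}$, hence $\|D_zf^nv\|\le\|D_zf^nv^c\|+\|D_zf^nv^s\|\le 2(\sin\theta_0)^{-1}C_\e e^{n\e}$. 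As $C_\e$ may, for each fixed $\e>0$, be taken as large as we wish, we absorb the $\e$-independent factor $2(\sin\theta_0)^{-1}$ into $C_\e$ and assume from the outset that $\|D_zf^n|_{E^{cs}_f(z)}\|\le C_\e e^{n\e}$ for all $z\in M$ and $n\in\NN$.

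Finally, given $y\in W^{cs}_f(x,C_\e^{-1}e^{-n\e}\d/2)$, pick a piecewise $C^1$ curve $\gamma\colon[0,1]\to W^{cs}_f(x)$ from $x$ to $y$ of length strictly less than $C_\e^{-1}e^{-n\e}\d/2$ (possible since $d^{cs}(x,y)$ is the infimum of such lengths and is $<C_\e^{-1}e^{-n\e}\d/2$). Because $W^{cs}_f$ is $f$-invariant, $f^n\circ\gamma$ lies in $W^{cs}_f(f^nx)$ and joins $f^nx$ to $f^ny$, and $\mathrm{length}(f^n\circ\gamma)=\int_0^1\|D_{\gamma(t)}f^n(\gamma'(t))\|\,dt\le C_\e e^{n\e}\,\mathrm{length}(\gamma)<\d/2$. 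Hence $d^{cs}(f^nx,f^ny)\le\mathrm{length}(f^n\circ\gamma)<\d/2<\d$, i.e. $f^ny\in W^{cs}_f(f^nx,\d)$, which is the claim. I expect no genuine obstacle: the only point needing a word is the passage from the center bound to the center-stable bound, dealt with by the bounded-angle/compactness remark above, and the harmless factor $1/2$ in the statement leaves ample slack, so the lemma is indeed immediate.
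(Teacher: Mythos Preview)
Your argument is correct and is exactly the standard derivation the paper has in mind; indeed, the paper offers no proof at all, merely declaring the lemma ``immediate'' (the existence and $f$-invariance of $W^{cs}_f$ being implicit from the dynamical coherence assumed throughout Section~5). Your explicit passage from the center bound to the center-stable bound via the uniform angle between $E^c_f$ and $E^s_f$, together with the path-length estimate, is precisely the routine verification the word ``immediate'' is meant to cover.
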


\subsection{PHDs with property (EM)}
In the first subsection, we prove Theorem C. Let $f: M \to M$ be a $C^1$ dynamically coherent partially hyperbolic diffeomorphism which has subexponential growth in center direction and satisfies property (EM).

Given any $x_0\in M, 0<\d<\d_0$ and $\e>0$, $R(x_0,n,\e,\d)$ is called an \emph{$(n,\e,\d)$-rectangle about $x_0$} if it is in the form
\begin{equation*}
R(x_0,n,\e,\d):=\bigcup_{y\in W_f^{cs}(x_0,C_\e^{-1} e^{-n\e}\d/2)}W^u_f(y,2\d).
\end{equation*}

As $W^u$ and $W^{cs}$ are uniformly transverse continuous foliations, there exists $c_0=c_0(\d_0)>0$ such that for any $0<\d<\d_0$,
\begin{equation}\label{e:contains}
B(x_0,c_0^{-1}C_\e^{-1} e^{-n\e}\d/2)\subset R(x_0,n,\e,\d)\subset \bigcup_{y\in W^u_f(x_0,3\d)}W_f^{cs}(y,c_0C_\e^{-1} e^{-n\e}\d/2).
\end{equation}

Recall that for a compact Riemannian manifold $M$, $\phi\in C^{\infty}(M,\RR)$ and $l\in \NN$, the Sobolev norm is defined as
\begin{equation}\label{e:sobolev}
\|\phi\|_{l,2}=\left(\sum_{0\le j \le l} \int_M |\nabla^j \phi|^2d\mu\right)^{1/2},
\end{equation}
where $\nabla$ denotes the covariant derivative and $\mu$ denotes the volume measure associated to $M$ (cf. \cite[Definition 2.1]{Eh}).

The following is lemma is useful. See \cite[Lemma 3.3]{Kad} for proof.
\begin{lemma}\label{appr}
Let $M$ be a $d$-dimensional compact smooth Riemannian manifold and $l\in \NN$. Then there exist $r_0>0$ and $M_{l,d}>0$ such that the following holds: for any $x\in M$ and $0<\epsilon, r<r_0$, there exists $\phi_\e\in C^\infty(M,[0,1])$ such that
\begin{enumerate}
  \item $\phi_\e \equiv 1$ on $B(x,r)$;
  \item $\phi_\e \equiv 0$ on $B(x,r+\e)^c$;
  \item $\|\phi_\e\|_l\le M_{l,d}\e^{-d-l}$;
  \item $\|\phi_\e\|_{l,2}\le M_{l,d}\e^{-d-l}$.
\end{enumerate}

\end{lemma}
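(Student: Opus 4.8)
The plan is to transplant to $M$, through geodesic normal coordinates, a mollified indicator function on $\RR^d$, and to read off all four conclusions from elementary properties of convolution together with the standard uniform control of the metric on small geodesic balls of a compact manifold. To set the scale, since $M$ is compact the injectivity radius $\iota_0:=\inf_{x\in M}\mathrm{inj}(x)$ is positive, and I would fix any $r_0>0$ with $3r_0<\iota_0$ and $r_0\le1$. For each $x\in M$ the exponential map $\exp_x$ is then a diffeomorphism from the Euclidean ball $B(0,2r_0)\subset\RR^d$ onto $B(x,2r_0)$ carrying $B(0,s)$ onto the metric ball $B(x,s)$ for every $s\le 2r_0$; in the resulting coordinates the metric coefficients, their inverse, all of their derivatives, and the volume density are bounded, and bounded away from degeneracy, by constants depending only on $M$ and $r_0$, uniformly in $x$. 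This uniformity is the only geometric input I will use.

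Next I would write down the model. Fix once and for all $\eta\in C_c^\infty(B(0,1),[0,\infty))$ with $\int_{\RR^d}\eta=1$, put $\eta_t(u)=t^{-d}\eta(u/t)$, and for $0<r,\e<r_0$ set $\tilde\phi:=\mathbf 1_{B(0,\,r+\e/2)}*\eta_{\e/2}$. A one-line check gives $\tilde\phi\in C^\infty(\RR^d,[0,1])$, with $\tilde\phi\equiv1$ on $B(0,r)$ (since $|u|\le r$ forces $B(u,\e/2)\subset B(0,r+\e/2)$) and $\tilde\phi\equiv0$ off $B(0,r+\e)$ (since $|u|\ge r+\e$ forces $B(u,\e/2)\cap B(0,r+\e/2)=\emptyset$); in particular $\tilde\phi$ is supported in $\overline{B(0,r+\e)}\subset B(0,2r_0)$. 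I would then define $\phi_\e:=\tilde\phi\circ\exp_x^{-1}$ on $B(x,2r_0)$ and $\phi_\e\equiv0$ elsewhere; since $\tilde\phi$ vanishes near $\partial B(0,2r_0)$ this is $C^\infty$ on all of $M$, and since $\exp_x$ identifies Euclidean balls with metric balls it satisfies conclusions (1) and (2).

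For (3) and (4) I would estimate the Euclidean derivatives of $\tilde\phi$ and then translate. For $|\alpha|\le l$ we have $\partial^\alpha\tilde\phi=\mathbf 1_{B(0,r+\e/2)}*\partial^\alpha\eta_{\e/2}$ with $\partial^\alpha\eta_{\e/2}(u)=(\e/2)^{-d-|\alpha|}(\partial^\alpha\eta)(2u/\e)$, so Young's inequality gives $\|\partial^\alpha\tilde\phi\|_{L^\infty}\le\Vol\big(B(0,r+\e/2)\big)\,\|\partial^\alpha\eta_{\e/2}\|_{L^\infty}\le C_{l,d}\,\e^{-d-l}$, where we used $\e<r_0\le1$. (Pairing $L^1$ with $L^1$ instead even yields $\|\partial^\alpha\tilde\phi\|_{L^\infty}\le C'_{l,d}\,\e^{-l}$, but the cruder bound already suffices.) On $B(x,2r_0)$ each covariant derivative $\nabla^j\phi_\e$ is a universal polynomial in the partials $\partial^\beta\tilde\phi$ with $|\beta|\le j$ and in the Christoffel symbols of the normal-coordinate metric together with their derivatives of order $\le j-1$, all controlled by constants depending only on $M$ and $r_0$; hence $\|\phi_\e\|_l\le M_{l,d}\,\e^{-d-l}$, which is (3). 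Since the volume density in these coordinates is comparable to Lebesgue measure and $\phi_\e$ is supported in $B(x,2r_0)$, bounding each $\|\nabla^j\phi_\e\|_{L^2(M)}$ by $\Vol\big(B(0,2r_0)\big)^{1/2}$ times $\max_{|\beta|\le l}\|\partial^\beta\tilde\phi\|_{L^\infty}$ and summing over $0\le j\le l$ gives (4) after enlarging $M_{l,d}$.

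The convolution identities, Young's inequality, and the multivariate chain rule are all routine; the one place that demands care is the \emph{uniformity in $x$} of every constant, which reduces entirely to the classical fact that on geodesic balls of a fixed small radius in a compact Riemannian manifold the metric is uniformly $C^\infty$-bounded, so that converting Euclidean chart estimates into intrinsic $C^l$- and Sobolev norms costs only a factor depending on $M,r_0,l,d$; I would invoke this rather than reprove it. The only other bookkeeping is the choice $3r_0<\iota_0$, which is exactly what guarantees that the support $\overline{B(0,r+\e)}$ of the model lies strictly inside the coordinate domain $B(0,2r_0)$, so that extension by zero produces a globally smooth function on $M$.
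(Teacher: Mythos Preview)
Your argument is correct: the mollified indicator $\tilde\phi=\mathbf 1_{B(0,r+\e/2)}*\eta_{\e/2}$ transported through normal coordinates does the job, and the uniform $C^\infty$-control of the metric on balls of radius $\le 2r_0$ in a compact manifold is exactly what is needed to turn the Euclidean derivative bounds into intrinsic $C^l$ and Sobolev bounds with constants independent of $x$. One small remark: in your parenthetical aside you write ``pairing $L^1$ with $L^1$'' for the sharper $\e^{-l}$ bound; what you mean is taking $\|\mathbf 1_B\|_{L^\infty}\|\partial^\alpha\eta_{\e/2}\|_{L^1}$ in Young's inequality.

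As for comparison: the paper does not give its own proof of this lemma at all --- it simply cites \cite[Lemma 3.3]{Kad}. Your construction is the standard one and is almost certainly what appears there, so you have supplied a self-contained argument where the paper outsourced it.
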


The following is a crucial lemma on effective density of the unstable foliation.
\begin{lemma}\label{l-main}(Effective density)
For any $0<\d<\d_0$ and $\e>0$, there exist $N_1\in \NN$ and $D>0$ such that for any $n\ge N_1$,  $k\ge Dn\e$ and any $x_0,x \in M$, $f^kW^u_f(x,\d)\cap R(x_0,n,\e,\d)$ has a connected component of the form $W^u_f(y,2\d)$ for some $y\in W_f^{cs}(x_0,C_\e^{-1} e^{-n\e}\d/2)$.
\end{lemma}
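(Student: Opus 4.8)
The plan is to prove Lemma \ref{l-main} by testing the exponential mixing estimate \eqref{e:mixing0} against carefully chosen bump functions supplied by Lemma \ref{appr}, so as to force a full unstable plaque of $f^k W^u_f(x,\d)$ into the rectangle $R(x_0,n,\e,\d)$. First I would unwind the geometry: by \eqref{e:contains}, $R(x_0,n,\e,\d)$ contains the Euclidean ball $B(x_0, r_n)$ with $r_n := c_0^{-1}C_\e^{-1}e^{-n\e}\d/2$, and it is a ``product box'' whose $cs$-thickness is comparable to $r_n$; a connected component of $f^kW^u_f(x,\d)$ meeting the central ball $B(x_0, r_n/2)$ and with $d^u$-diameter at least $4\d$ will automatically contain a plaque $W^u_f(y,2\d)$ with $y$ in the $cs$-fiber $W^{cs}_f(x_0, C_\e^{-1}e^{-n\e}\d/2)$, once $\d<\d_0$ is small relative to the transversality constants. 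So it suffices to show: for $n\ge N_1$ and $k\ge Dn\e$, the set $f^kW^u_f(x,\d)$ genuinely intersects $B(x_0,r_n/2)$ and does so in a piece long enough to cross the box.

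The core step is the mixing argument. Apply Lemma \ref{appr} with a small radius $r\approx r_n$ (and $\e$-collar also $\approx r_n$) to get $\phi = \phi_n\in C^r(M,[0,1])$ supported in $B(x_0, r_n)$, identically $1$ on $B(x_0, r_n/2)$, with $\|\phi_n\|_r \le M_{r,d}\, r_n^{-d-r}$; take $\psi\equiv 1$ on $W^u_f(x,\d)$ (or a fixed smooth cutoff, so $\|\psi\|_r$ is bounded and $\int_{W^u_f(x,\d)}\psi\,dm^u_x \ge c > 0$ uniformly). Then \eqref{e:mixing0} gives
\begin{equation*}
\int_{W^u_f(x,\d)}\phi_n(f^k p)\,dm^u_x(p) \ge \Big(\int \phi_n\,dm\Big)\, c - C\,\|\phi_n\|_r\,\|\psi\|_r\, e^{-k\a}.
\end{equation*}
Now $\int\phi_n\,dm \ge c_1 r_n^{\,d}$ for a dimensional constant $c_1$ (the bump dominates the indicator of $B(x_0,r_n/2)$), while the error term is at most $C' r_n^{-d-r}e^{-k\a}$. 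Since $r_n = (\text{const})\,e^{-n\e}$, we have $r_n^{\,d}\asymp e^{-dn\e}$ and $r_n^{-d-r}\asymp e^{(d+r)n\e}$; choosing $D > (2d+r)/\a$, the hypothesis $k\ge Dn\e$ makes $e^{-k\a}\le e^{-(2d+r)n\e}$, so the error is $\lesssim e^{(d+r)n\e}e^{-(2d+r)n\e} = e^{-dn\e}$, which is strictly smaller than the main term $c\,c_1 e^{-dn\e}$ for all $n\ge N_1$ with $N_1$ absorbing the implied constants. Hence the left side is positive, i.e.\ there exists $p\in W^u_f(x,\d)$ with $f^k p\in B(x_0,r_n/2)\subset R(x_0,n,\e,\d)$.

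It remains to upgrade a single intersection point to a full plaque. The connected component $\Sigma$ of $f^k W^u_f(x,\d)\cap R(x_0,n,\e,\d)$ containing $f^kp$ is a piece of unstable leaf; since $f$ expands $W^u$ uniformly, for $k$ large $f^kW^u_f(x,\d)$ contains $W^u_f(f^kp, 2\d)$ inside it as an unstable ball of the ambient leaf (this is the same ``$N_3$'' mechanism used in the proof of Proposition \ref{ueg0}), and $2\d$ exceeds the $d^u$-width of the box $R(x_0,n,\e,\d)$ measured along unstable leaves (which is $\asymp \d$, governed by $c_0$ and the uniform transversality), so $\Sigma$ stretches all the way across. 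Then $\Sigma$, being connected, meeting a fixed $cs$-fiber, and $u$-crossing the box, must be exactly a plaque $W^u_f(y,2\d)$ with $y=W^{cs}_f(x_0,C_\e^{-1}e^{-n\e}\d/2)\cap \Sigma$; this uses the local product structure of $R(x_0,n,\e,\d)$, which holds after shrinking $\d_0$. I expect the main obstacle to be bookkeeping the three competing scales, $e^{-n\e}$ (box size), $r_n^{-d-r}$ (bump blow-up), and $e^{-k\a}$ (mixing decay), to pin down the precise constant $D$ and threshold $N_1$; and secondarily, making the ``connected component equals plaque'' deduction rigorous, which requires that the product-box coordinates behave well and that the intersecting unstable piece does not fold back — both handled by choosing $\d_0$ small relative to the transversality and the $C^1$-continuity of the foliations.
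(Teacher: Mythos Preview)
Your approach is essentially the paper's: test property~(EM) against bump functions from Lemma~\ref{appr} to force an intersection point inside the small ball contained in $R(x_0,n,\e,\d)$, then use uniform expansion along $W^u$ to upgrade the single point to a full plaque. Two points deserve comment.

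First, there is a genuine (if easily repaired) gap in your plaque upgrade. The assertion ``for $k$ large $f^kW^u_f(x,\d)$ contains $W^u_f(f^kp,2\d)$'' does not follow from the $N_3$ mechanism you cite. That mechanism says $f^NW^u_f(x,\d)\supset W^u_f(f^Nx,2c_2)$, i.e.\ the image contains a large ball around the image of the \emph{center}; but your intersection point $p$ may lie arbitrarily close to the boundary of $W^u_f(x,\d)$, in which case $f^kp$ lies on the boundary of $f^kW^u_f(x,\d)$ and $W^u_f(f^kp,2\d)$ spills out no matter how large $k$ is. The paper fixes this by running the mixing argument over the \emph{smaller} disk $W^u_f(x,\d/2)$: once $z\in f^kW^u_f(x,\d/2)$ lands in the rectangle and hence in some $W^u_f(y,2\d)$, uniform backward contraction ($f^{-k}W^u_f(w,4\d)\subset W^u_f(f^{-k}w,\d/2)$ for $k$ large) guarantees every $w\in W^u_f(y,2\d)$ pulls back into $W^u_f(x,\d)$, so $W^u_f(y,2\d)\subset f^kW^u_f(x,\d)$. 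Your argument needs this $\d/2$ versus $\d$ buffer; ``choosing $\d_0$ small'' does not address it.

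Second, a harmless stylistic difference: you smooth only the target bump $\phi$, at scale $r_n\asymp e^{-n\e}$, and take $\psi\equiv 1$, arriving at $D>(2d+r)/\a$. The paper instead smooths both indicators at scale $e^{-\lambda' k}$ (tied to $k$, not $n$), absorbs the resulting $L^1$ approximation errors $\int|\psi_k-\chi_A|\,dm^u_x$ and $\int|\phi_k-\chi_B|\,dm$, and obtains $D>d/\lambda'$ for a small $\lambda'$ depending on $\a,l,d,u$. Both schemes produce a valid $D$.
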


\begin{proof}
Fix arbitrary $0<\d<\d_0$ and $\e>0$. At first, we show that there exists $N_1'\in \NN$ and $D>0$ such that for any $n\ge N_1'$,  $k\ge Dn\e$ and any $x,x_0 \in M$,
\begin{equation}\label{intsec-ball}
f^kW^u_f(x,\d/2) \cap B(x_0,c_0^{-1}C_\e^{-1} e^{-n\e}\d/2) \ne \emptyset
\end{equation}
where $c_0$ satisfies \eqref{e:contains}.

Let $A=W^u_f(x,\d/2), B=B(x_0,c_0^{-1}C_\e^{-1} e^{-n\e}\d/2)$, $\chi_A$ and $\chi_B$ their characteristic functions.
Applying Lemma \ref{appr} for $B(x,r)=A$ (resp. $B(x,r)=B$), $\e=e^{-\lambda' k}$ for some sufficiently small $\lambda'$ specified later,
we obtain $C^\infty$ functions $\psi_k$ (resp. $\phi_k$).

Applying property (EM) \eqref{e:mixing0} to $C^\infty$ functions $\phi_k$ and $\psi_k$, we have
\begin{equation}\label{e:eff1}
\begin{aligned}
&\int_{W_f^u(x,\d)}\phi_k(f^k(p))\psi_k(p)dm_x^u(p)\ge \\
&\int\phi_k dm\int_{W_f^u(x,\d)}\psi_k dm_x^u-Ce^{-k\a}M_{l,d}M_{l,u}e^{\lambda'k(2l+d+u)}
\end{aligned}
\end{equation}
for any $k\ge 0$ where $u=\dim W_f^u(x,\d)$. By choosing $\lambda'$ small enough such that $\a-(2l+d+u)\lambda'>\lambda'$, we see from \eqref{e:eff1} that
\begin{equation*}
\begin{aligned}
\int_{W_f^u(x,\d)}\phi_k(f^k(p))\psi_k(p)dm_x^u(p)\ge \int\phi_k dm\int_{W_f^u(x,\d)}\psi_k dm_x^u-C_1e^{-\lambda'k}.
\end{aligned}
\end{equation*}
Then we have
\begin{equation*}
\begin{aligned}
&\int_{W_f^u(x,\d)}\chi_B(f^k(p))\chi_A(p)dm_x^u(p)\\
\ge &\int_{W_f^u(x,\d)}\phi_k(f^k(p))\psi_k(p)dm_x^u(p)-\int |\psi_k-\chi_A|dm_x^u-\int |\phi_k-\chi_B|dm\\
\ge &\int\phi_k dm\int_{W_f^u(x,\d)}\psi_k dm_x^u-C_1e^{-\lambda'k}-C_2e^{-\lambda'k}\\
\ge &C_3(\d/2)^u\cdot (c_0^{-1}C_\e^{-1} e^{-n\e}\d/2)^d-C_4e^{-\lambda'k}\\
=&C_5e^{-nd\e}-C_4e^{-\lambda'k}>0
\end{aligned}
\end{equation*}
if $n\ge N_1'$,  $k\ge Dn\e$ for some sufficiently large $N_1'\in \NN$ and some $D>\frac{d}{\lambda'}$.
Then \eqref{intsec-ball} follows immediately.

Now set $N_1=\max\{N_1',N_1''\}$, where $N_1''$ satisfies that, for any $k\ge DN_1''\e$,
\begin{equation*}
f^{-k}W_f^u(w,4\d)\subset W^u_f(f^{-k}w,\d/2)
\end{equation*}
for any $w\in M$.
Take $n\ge N_1$ and $k\ge Dn\e$.
In view of \eqref{e:contains} and \eqref{intsec-ball}, there exists
\begin{equation}\label{e:pointz}
\begin{aligned}
z\in f^kW^u_f(x,\d/2) \cap B(x_0,c_0^{-1}C_\e^{-1} e^{-n\e}\d/2)\subset f^kW^u_f(x,\d/2) \cap R(x_0,n,\e,\d).
\end{aligned}
\end{equation}
Therefore, there exists $y\in W_f^{cs}(x_0,C_\e^{-1} e^{-n\e}\d/2)$ such that $z\in W^u_f(y,2\d)$.
Take any $w\in W^u_f(y,2\d)$, and it is clear that
$d^u(z,w)\le 4\d.$
Then by the choice of $N_2''$, $d^u(f^{-k}w,f^{-k}z)\le \d/2$. In view of \eqref{e:pointz}, we have
$$f^{-k}w\in W^u_f(x,\d).$$
This implies $W^u_f(y,2\d)\subset f^k W^u_f(x,\d) \cap R(x_0,n, \e,\d)$ and completes the proof of the lemma.
\end{proof}

\begin{proposition}\label{uegnegeral}
Let $f: M \to M$ be a $C^1$ dynamically coherent PHD which has subexponential growth in the center direction and satisfies property (EM). Then $f$ has uniform exponential growth along unstable foliation.
\end{proposition}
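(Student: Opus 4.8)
The plan is to follow the architecture of the proof of Proposition~\ref{ueg0}, replacing its two inputs — topological transitivity (used through an effective density lemma with a constant return time) and Lyapunov stability (used to keep $cs$-distances bounded under forward iterates) — by the quantitative effective density of Lemma~\ref{l-main} and by Lemma~\ref{growth}, respectively. Fix $\rho>0$ and write $h:=h^u_\top(f)$; we may assume $h>3\rho$, since otherwise $e^{N(\rho,\d)(h-3\rho)}\le 1$ and it is enough to take $N(\rho,\d)$ large enough that $f^{N(\rho,\d)}W^u_f(x,\d)\supset W^u_f(f^{N(\rho,\d)}x,2\d)$ for every $x$, which holds by uniform expansion along $W^u$. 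Fix $\d>0$ small enough that $c_0\d<\d_0$ and that all $cs$-holonomies below are defined, where $c_0=c_0(\d_0)>1$ controls the uniform transversality of the continuous foliations $W^u_f$ and $W^{cs}_f$ (the latter exists by dynamical coherence) as in \eqref{e:contains}. By Remark~\ref{unstableentropy} choose $x_0\in M$ with $h_\top(f,\overline{W^u_f(x_0,\d)})>h-\rho$. Since the constant $D$ of Lemma~\ref{l-main} may be chosen independently of $\d$ and $\e$ — its proof allows any $D>d/\lambda'$ with $\lambda'$ depending only on $\dim M$, $\dim W^u_f$ and the exponent $\a$ in property (EM) — we first fix such a $D$ and then choose $0<\e\ll\d$ small enough that $D\e(h-3\rho)<\rho$; fix $N_0\in\NN$ so that for $n\ge N_0$ there is an $(n,\e)$ u-separated set $S_n\subset\overline{W^u_f(x_0,\d)}$ with $\#S_n=N^u(f,n,\e,x_0,\d)>e^{n(h-2\rho)}$. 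Let $c_3=c_3(\e,\d_0)>0$ be the expansion constant of the $cs$-holonomy (so $H^{cs}_{a,b}(W^u_f(a,\e/2))\supset W^u_f(b,c_3)$ whenever $d^{cs}(a,b)<\d_0$), fix $N_3\in\NN$ with $f^{N_3}(W^u_f(z,c_3))\supset W^u_f(f^{N_3}z,2\d)$ for all $z$, and let $N_1$ be furnished by Lemma~\ref{l-main} for this $\d,\e$.

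Now pick $n_\rho\ge\max\{N_0,N_1,N_3\}$ large (to be fixed last), put $k_\rho:=\lceil Dn_\rho\e\rceil$ and $N(\rho,\d):=N_3+n_\rho+k_\rho$, noting that $N(\rho,\d)$ does not depend on $x$. Given an arbitrary $x\in M$, apply Lemma~\ref{l-main} with $n=n_\rho$, $k=k_\rho$ to the pair $x,x_0$: there is $y\in W^{cs}_f(x_0,C_\e^{-1}e^{-n_\rho\e}\d/2)$ with $W^u_f(y,2\d)\subset f^{k_\rho}W^u_f(x,\d)$. Since $d^{cs}(x_0,y)$ is of order $e^{-n_\rho\e}$, the holonomy $H^{cs}_{x_0,y}$ maps $\overline{W^u_f(x_0,\d)}$ into $W^u_f(y,2\d)$; set $S':=H^{cs}_{x_0,y}(S_{n_\rho})$ and $y_i:=H^{cs}_{x_0,y}(x_i)$, so by transversality $d^{cs}(x_i,y_i)\le c_0\,C_\e^{-1}e^{-n_\rho\e}\d/2$. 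Applying $f^{n_\rho}$ and Lemma~\ref{growth} (with $c_0\d$ in place of $\d$) gives $d^{cs}(f^{n_\rho}x_i,f^{n_\rho}y_i)<c_0\d<\d_0$. As $S_{n_\rho}$ is $(n_\rho,\e)$ u-separated, the balls $\{W^u_f(f^{n_\rho}x_i,\e/2)\}$ are disjoint inside $f^{n_\rho}W^u_f(x_0,\d)$; the $cs$-holonomy of this plaque onto $f^{n_\rho}W^u_f(y,2\d)$ (well defined since the $cs$-displacement just estimated is $<\d_0$) is injective and, by the choice of $c_3$, sends each of them onto a set containing $W^u_f(f^{n_\rho}y_i,c_3)$, so the sets $\{W^u_f(f^{n_\rho}y_i,c_3)\}$ are disjoint and lie in $f^{n_\rho}W^u_f(y,2\d)$. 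Finally $f^{N_3}$ sends these onto sets containing the disjoint balls $W^u_f(f^{N_3+n_\rho}y_i,2\d)$, which therefore lie in $f^{N_3+n_\rho}W^u_f(y,2\d)\subset f^{N(\rho,\d)}W^u_f(x,\d)$; there are $\#S'=\#S_{n_\rho}>e^{n_\rho(h-2\rho)}$ of them, each of the form $W^u_f(z_i,2\d)$ with $z_i\in M$.

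It remains to arrange $e^{n_\rho(h-2\rho)}\ge e^{N(\rho,\d)(h-3\rho)}$. Using $k_\rho\le Dn_\rho\e+1$, the right-hand exponent is at most $\big((1+D\e)n_\rho+N_3+1\big)(h-3\rho)$, so it suffices that $n_\rho\big[(h-2\rho)-(1+D\e)(h-3\rho)\big]\ge(N_3+1)(h-3\rho)$. The bracket equals $\rho-D\e(h-3\rho)>0$ by the choice of $\e$, so the inequality holds for all large $n_\rho$; fixing such an $n_\rho$ and recalling that $x\in M$ was arbitrary while $N(\rho,\d)$ is independent of $x$, we conclude that $f$ has uniform exponential growth along the unstable foliation.

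The step I expect to be the crux is the control of $cs$-distances in the middle paragraph: after the first holonomy the relevant $cs$-displacement is only of order $e^{-n_\rho\e}$ — exactly because the rectangle $R(x_0,n_\rho,\e,\d)$ was built with $cs$-width of that order — so Lemma~\ref{growth}, i.e.\ the bound $\|Df^{n_\rho}|_{E_f^c}\|\le C_\e e^{n_\rho\e}$, only inflates it back to order $\d$ over $n_\rho$ iterates; this is precisely how the fast equidistribution of $W^u$ cancels the subexponential center growth. The only other subtlety is the accounting: $n_\rho$ must be large enough for $\#S_{n_\rho}>e^{n_\rho(h-2\rho)}$ while the wasted time $k_\rho\approx Dn_\rho\e$ stays a negligible fraction of $n_\rho$, which forces us to fix $\e$ (hence the rectangle thinness) small — depending only on $\rho$, $h$ and the mixing exponent — before letting $n_\rho\to\infty$.
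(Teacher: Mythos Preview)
Your proof is correct and follows essentially the same architecture as the paper's: apply the effective density Lemma~\ref{l-main} to land a piece of $f^{k_\rho}W^u_f(x,\d)$ in a thin rectangle about $x_0$, transfer a large $(n_\rho,\e)$ u-separated set via $cs$-holonomy, use Lemma~\ref{growth} to bound the $cs$-drift over $n_\rho$ iterates, and finish with $N_3$ more iterates to produce the required $2\d$-balls. Your minor streamlinings --- fixing $D$ first (correctly noting it depends only on $d,u,l,\a$), using a single parameter $\e$ in place of the paper's separate $\e$ and $\bar\e$, and saving one factor of $N_3$ by using the full plaque $W^u_f(y,2\d)$ already supplied by Lemma~\ref{l-main} together with the smallness of $d^{cs}(x_0,y)$ --- are all valid and make the bookkeeping slightly cleaner without changing the method.
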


\begin{proof}
Denote by $h$ the unstable topological entropy of $f$, i.e., $h=h^u_{\top}(f)$. Since $f$ has subexponential growth in the center direction,
Proposition \ref{unstable} tells us that $h=h_{\top}(f)$, the topological entropy of $f$.
Then by the definition of unstable topological entropy and Remark \ref{unstableentropy}, for any $\rho>0$, we can find $x_0\in M$ and $0<\d<\d_0$ small enough such that
$$h_{\top}(f, \overline{W^u_f(x_0,\d)})>h-\rho.$$
Pick $0<\e\ll \d$ and $N_2\in \NN$ large enough such that if $n\ge N_2$, then
$$N^u(f,n,\e, x_0,\d)>e^{n(h-2\rho)},$$
where $N^u(f,n,\e, x_0,\d)$ denotes the cardinality of a maximal $(n,\e)$ u-separated subset $S$ of $\overline{W^u_f(x_0,\d)}$, i.e.,
$\#S=N^u(f,n,\e, x_0,\d)>e^{n(h-2\rho)}$ where $\#S$ denotes the cardinality of $S$.

We note that by dynamical coherence of $f$, the center-stable foliation $W_f^{cs}$ exists and it is continuous. Shrinking $\d_0$ if necessary, for any $y\in W^{cs}_f(x,c_0\d_0)$, the $cs$-holonomy map $H_{x,y}^{cs}$ is well defined such that
$$H^{cs}_{x,y}(\overline{W^u_f(x,\d)})\subset W^u_f(y,c_2(\d))$$
and
\begin{equation*}
H^{cs}_{x,y}(W^u_f(x,\e/2))\supset W^u_f(y,c_3(\e,\d))
\end{equation*}
for some constants $c_2=c_2(\d)>2\d$ and $c_3=c_3(\e,\d)>0$.
Then take $N_3\in \NN$ large enough such that for any $n\ge N_3$ and for any $x\in M$:
\begin{enumerate}
  \item $f^n(W^u_f(x,c_3))\supset W^u_f(f^n(x),2\d)$;
  \item $f^n(W^u_f(x,\d))\supset W^u_f(f^n(x),2c_2)$.
\end{enumerate}

%We note that the center-stable foliation $W_g^{cs}$ is $C^\infty$. So shrinking $\d_0$ if necessary at the very beginning, we can define the cs-holonomy map
%$$H^{cs}_{x,y}: \overline{W^u_g(x,\d)}\to \overline{W^u_g(y,2\d)}$$
%for any $x,y\in M$ with $y\in W_g^{cs}(x,\d)$, by setting $H^{cs}_{x,y}(z)$ to be the unique point in
%$$W_g^{cs}(z,2\d)\cap W^u_g(y,2\d).$$
%If $\d_0$ is small enough, $H^{cs}_{x,y}$ is $C^\infty$ with derivative close to identity. Thus,
%$$W^u_g(y,\d/2)\subset H^{cs}_{x,y}(W^u_g(x,\d))\subset W^u_g(y,2\d)$$
%and
%\begin{equation*}
%H^{cs}_{x,y}(W^u_g(x,\e/2))\supset W^u_g(y,\e/4).
%\end{equation*}
%Take $N_4=N_4(\d,\e)\in \NN$ large enough such that for any $x\in M$,
%$$T_g^n(\overline{W^u_g(x,\e/4)})\supset \overline{W^u_g(T_g^nx,2\d)}$$
%for any $n\ge N_4$. This is possible by Lemma \ref{l-action-g}(1).

Now take any $x\in M$ and $\bar{\e}$ small enough to be specified later. Applying Lemma \ref{l-main} with $x=f^{N_3}x$, $x_0=x_0$, $\d=\d$ and $\e=\bar{\e}$, there exists $N_1\in \NN$ depending only on $\d$ and $\bar{\e}$ such that for any $n\ge N_1$
\begin{equation}\label{e:intersection}
f^k(W_f^u(f^{N_3}x,\d))\cap W_f^{cs}(x_0,C_{\bar\e}^{-1} e^{-n\bar\e}\d/2)\neq \emptyset
\end{equation}
where $k=Dn\bar\e$.

Finally, we take $\bar\e$ small enough and $n_\rho\in \NN$ large enough such that $n_\rho>\max\{N_1, N_2, N_3\}$ and
$$\frac{n_\rho}{2N_3+Dn_\rho\bar\e+n_\rho}>\frac{h-3\rho}{h-2\rho}.$$

The remaining part of the proof of the proposition is divided into three steps:

\textbf{Step $1$.} Denote $k=Dn_\rho\bar\e$. It follows from \eqref{e:intersection} that there exists some point
$$y\in f^{k+N_3}(W_f^u(x,\d))\cap W_f^{cs}(x_0,C_{\bar\e}^{-1} e^{-n\bar\e}\d/2).$$
By the choice of $N_3$ and the fact $c_2>2\d$, we have
$$f^{k+N_3}(W_f^u(x,\d))\supset W^u_f(y,c_2).$$
Hence the cs-holonomy map
$$H^{cs}_{x_0,y}: \overline{W^u_f(x_0,\d)}\to W^u_f(y,c_2)\subset f^{k+N_3}(W_f^u(x,\d))$$
is well defined by the choice of $c_2$.

\textbf{Step $2$.} Let $S$ be an $(n_\rho,\e)$ u-separated subset of $\overline{W^u_f(x_0,\d)}$ of maximal cardinality. Denote $S'=H^{cs}_{x_0,y}(S)$.
By \eqref{e:contains}, we have
$$d_f^{cs}(x_i,y_i)\le c_0C_{\bar\e}^{-1} e^{-n\bar\e}\d/2$$
for any $x_i\in S$ and $y_i=H^{cs}_{x_0,y}(x_i)\in S'$.

Since $S$ is $(n_\rho,\e)$ u-separated, we know that
$$\{W_f^u(f^{n_\rho}x_i, \e/2): x_i\in S\}$$
are disjoint balls contained in $f^{n_\rho}\overline{W^u_f(x_0,\d)}$.
By Lemma \ref{growth},
$$d^{cs}(f^{n_\rho}x_i,f^{n_\rho}y_i)< c_0\d.$$
Then we have
$$H^{cs}_{f^{n_\rho}x_i,f^{n_\rho}y_i}(W_f^u(f^{n_\rho}x_i, \e/2))\supset W^u(f^{n_\rho}y_i, c_3)$$
by the definition of $c_3$. Moreover,
$$\{W_f^u(f^{n_\rho}y_i, c_3): y_i\in S'\}$$
are disjoint sets contained in $f^{n_\rho}W^u_f(y,c_2)$.

\textbf{Step $3$.} By the choice of $N_3$,
$$f^{N_3}(W^u_f(f^{n_\rho}y_i,c_3)\supset W^u_f(f^{N_3+n_\rho}y_i,2\d).$$
It follows that
$$\{W^u_f(f^{N_3+n_\rho}y_i,2\d): y_i\in S'\}$$
are disjoint sets contained in $f^{N_3+n_\rho}W^u_f(y,c_2)\subset f^{n_\rho+Dn_\rho\bar\e+2N_3}W^u_f(x,\d)$.
Recall that $\bar\e$ is depending on $\rho$. Set
$$N(\rho, \d)=n_\rho+Dn_\rho\bar\e+2N_3.$$
Since $\#S'=\#S>e^{n_\rho(h-2\rho)}>e^{N(\rho, \d)(h-3\rho)}$, we know
$f^{N(\rho, \d)}W^u_f(x,\d)$ contains at least $e^{N(\rho, \d)(h-3\rho)}$ disjoint sets
$$\{W^u_f(f^{N_3+n_\rho}y_i,2\d): y_i\in S'\}$$
This proves the proposition by setting $z_i=f^{N_3+n_\rho}y_i$.
\end{proof}

\begin{proof}[Proof of Theorem C]
Theorem C is an immediate consequence of Proposition \ref{uegnegeral} and Theorem A.
\end{proof}
\subsection{Homogeneous dynamics}
\subsubsection{Definitions and basic properties}
Let $G$ be a connected semisimple Lie group, and $g\in G$. For $\lambda \in \mathbb{C}$, set
$$E_\lambda:=\{X\in \fg_\CC: (\Ad(g) -\lambda I)^jX=0 \text{\ for some\ } j\in \mathbb{N}-\{0\}\},$$
where $\fg_\CC$ denotes the complexification of $\fg$.
Let $\fh^+, \fh^-$ be the subalgebras of $\fg$ with complexifications
\begin{equation}\label{e:spaces}
\fh^+_\CC:=\bigoplus_{|\lambda|>1} E_\lambda, \quad \fh^-_\CC:=\bigoplus_{|\lambda|\le 1} E_\lambda,
\end{equation}
and let $H^+,  H^-$ be the corresponding subgroups of $G$. Then $H^+$ is the unstable horospherical subgroup for $g$, that is,
 $$H^+=\{h\in G: \lim_{k\to +\infty}g^{-k}hg^k=e\}$$
where $e\in G$ is the identity of $G$. $H^+$ is a closed, connected subgroup of $G$.
Then $\Ad g$ (or just $g$ for short) is called \emph{non-quasiunipotent} if $H^+$ is nontrivial.
Let $d^+$(resp. $d^-$) be the right invariant metric on $H^+$(resp. $H^-$) induced from the norm on $\fh^+$(resp. $\fh^-$) inherited from that of $\fg$. Let $B_r^+$(resp. $B_r^-$) denote the corresponding metric ball in $(H^+,d^+)$ (resp. $(H^-,d^-)$) centered at the identity $e\in G$ with radius $r$.

Let $\Gamma$ be a cocompact lattice in $G$, then we have a compact homogenous space $\ggm$. By compactness, there exist $\d_0>0$ and $c_0>1$ such that for any $0<\d<\d_0$ and any $x\in \ggm$, the map $$\psi_x:B_\d^+\times B_\d^-\rightarrow \ggm, \quad \psi_x(h^+,h^-)=h^+h^-x,$$ is a diffeomorphism onto its image, and moreover
\begin{equation*}\label{e:contain}
B(x,c_0^{-1}\d)\subset B_\d^+ B_\d^-x\subset B(x,c_0\d).
\end{equation*}
%We may shrink $\d_0>0$ if necessary in the process of the proof below.

Denote the conjugate action of $g$ on $G$ as $C(g)$. Then $C(g)$ defines an expanding automorphism on $H^+$, and an at most polynomially expanding automorphism of $H^-$. Below is an explicit description of the action of $C(g)$ on $H^+$ and $H^-$.
\begin{lemma}\label{l-action-g}
\
\begin{enumerate}
\item There exist $\lambda\in (0,1)$ and $N_1\in \NN$ such that
%\begin{enumerate}
$C(g^{-n})h\in B_{\lambda^n\d}^+$ whenever $n\ge N_1, 0<\d\le \d_0$ and $h\in B_\d^+$.
%\item $C(g^{-n})h\in B_{c_0'\d}^+$ when $n\in [0,N_1], \d\le \d_0$ and $h\in B_\d^+$.
%\end{enumerate}
\item There exist $c_1,\kappa>0$ such that $C(g^{i})(B_{c_1n^{-\kappa}\d}^-)\subset B_\d^-$ when $0<\d\le \d_0$ and $1\le i\le n$.
\end{enumerate}
\end{lemma}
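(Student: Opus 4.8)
The plan is to transfer the statement to a linear-algebra computation on $\fg$ through the exponential map. Since $\fh^+_\CC$ and $\fh^-_\CC$ are sums of generalized eigenspaces of $\Ad g$, they are $\Ad g$-invariant, so $C(g)$ restricts to automorphisms of $H^+$ and of $H^-$, and on a neighborhood of $e$ one has $C(g^n)\circ\exp=\exp\circ\Ad(g^n)$. First I would shrink $\d_0$ once and for all so that, for $\sigma\in\{+,-\}$, $\exp$ is a diffeomorphism from the ball of radius $\d_0$ in $(\fh^\sigma,\|\cdot\|)$ onto an open neighborhood of $e$ in $H^\sigma$ containing $B_{\d_0}^\sigma$, and so that there is a constant $c_0'\ge 1$ with $(c_0')^{-1}\|\log h\|\le d^\sigma(h,e)\le c_0'\|\log h\|$ for every $h$ in that neighborhood. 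Using $C(g^n)(\exp X)=\exp(\Ad(g^n)X)$, once one checks that the relevant $\Ad(g^n)$-images stay inside the radius-$\d_0$ ball of $\fh^\sigma$ (where the comparison is valid), estimating $d^\sigma(C(g^n)h,e)$ comes down to bounding the operator norm $\|\Ad(g^n)|_{\fh^\sigma}\|$.

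For part (1): by definition of $\fh^+_\CC$, every eigenvalue of $\Ad g|_{\fh^+_\CC}$ has modulus $>1$, so the spectral radius of $\Ad(g^{-1})|_{\fh^+}$ is some $\rho<1$. Fixing $\lambda_0$ with $\rho<\lambda_0<1$, the spectral radius formula gives $C\ge 1$ with $\|\Ad(g^{-n})|_{\fh^+}\|\le C\lambda_0^{\,n}$ for all $n\ge 0$. For $h\in B_\d^+$ with $0<\d\le\d_0$ this yields $\|\log h\|\le c_0'\d$ and $\|\Ad(g^{-n})\log h\|\le c_0'C\lambda_0^{\,n}\d$, which is $\le\d_0$ for $n$ beyond a threshold independent of $\d$; on that range $C(g^{-n})h=\exp(\Ad(g^{-n})\log h)$ lies in the good neighborhood and $d^+(C(g^{-n})h,e)\le c_0'\|\Ad(g^{-n})\log h\|\le (c_0')^2 C\lambda_0^{\,n}\d$. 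Picking $\lambda\in(\lambda_0,1)$ and $N_1$ with $(c_0')^2 C\lambda_0^{\,n}\le\lambda^n$ for $n\ge N_1$ gives $C(g^{-n})h\in B_{\lambda^n\d}^+$, which is the claim.

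For part (2): every eigenvalue of $\Ad g|_{\fh^-_\CC}$ has modulus $\le 1$, so by the Jordan normal form there are an integer $\kappa\ge 0$ and $C_0\ge 1$ with $\|\Ad(g^i)|_{\fh^-}\|\le C_0(1+i)^\kappa$ for all $i\ge 0$; for $1\le i\le n$ this gives $\|\Ad(g^i)|_{\fh^-}\|\le 2^\kappa C_0\,n^\kappa$. Choosing $c_1\in(0,1]$ small enough that $(c_0')^2\,2^\kappa C_0\,c_1\le 1$, for $h\in B_{c_1 n^{-\kappa}\d}^-$ one has $\|\log h\|\le c_0'c_1 n^{-\kappa}\d$, hence $\|\Ad(g^i)\log h\|\le 2^\kappa c_0'C_0 c_1\,\d\le\d\le\d_0$, so $C(g^i)h=\exp(\Ad(g^i)\log h)$ stays in the good neighborhood and $d^-(C(g^i)h,e)\le c_0'\|\Ad(g^i)\log h\|\le (c_0')^2\,2^\kappa C_0 c_1\,\d\le\d$. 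Thus $C(g^i)(B_{c_1 n^{-\kappa}\d}^-)\subset B_\d^-$ for all $1\le i\le n$.

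I do not expect a genuine obstacle here; the only points requiring care are the uniformity in $\d$ (handled by using $\d\le\d_0$ everywhere and absorbing all multiplicative constants into $\lambda$, $N_1$, $c_1$, $\kappa$) and the passage between the right-invariant metrics $d^\pm$ and the norm on $\fh^\pm$ near $e$, which is why $\d_0$ is shrunk at the outset so that $\exp$ is bi-Lipschitz at the relevant scale and the orbits $C(g^n)(B_\d^\pm)$ never leave it.
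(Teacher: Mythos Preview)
Your proposal is correct and follows exactly the approach indicated in the paper, which simply states that ``all these properties are easily checked by passing to the Lie algebras $\fh^+, \fh^-$ using the exponential maps.'' You have supplied the details the paper omits: the spectral-radius bound on $\Ad(g^{-1})|_{\fh^+}$ for part~(1), the polynomial Jordan-block bound on $\Ad(g^i)|_{\fh^-}$ for part~(2), and the bi-Lipschitz comparison between $d^\pm$ and the Lie-algebra norm near $e$ to transfer these linear estimates back to the group.
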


\begin{proof}
All these properties are easily checked by passing to the Lie algebras $\fh^+, \fh^-$ using the exponential maps.
\end{proof}

Let $T_g: \ggm\to \ggm$ be the dynamical system induced by the left translation of $T_g(x)=gx$, for any $x\in \ggm$. Assume that $\Ad g$ is nonquasiunipotent. Then $T_g$ is a $C^\infty$ partially hyperbolic diffeomorphism. The local unstable and center-stable manifolds of $T_g$ at $x\in \ggm$ of size $\d>0$ are given by
$$W^u_g(x,\d)=B_\d^+x, \quad W^{cs}_g(x,\d)=B_\d^-x.$$
In the following, we will use both notations $W^u_g(x,\d)$ and $B_\d^+x$ (resp. $W^{cs}_g(x,\d)$ and $B_\d^-x$) interchangeably for the local unstable (resp. local center-stable) manifold of $T_g$ at $x\in \ggm$. $T_g$ has polynomial (hence subexponential) growth in the center direction, by Lemma \ref{l-action-g} and the discussion before it.

The topological entropy of $T_g$ can be calculated by the formula:
$$h_\top(g)=h_{\mu_G}(g)=\log |\det (\Ad g|_{\fh^+})|=\sum_{|\lambda|>1}\log |\lambda|$$
where the sum is taken over all the eigenvalues of $\Ad(g)|_{\fh^+}$ counted with multiplicity, and $\mu_G$ is the normalized Haar measure on $\ggm$. In fact, the above formula holds in a more general setting when $\Gamma$ is any lattice of $G$
(cf. \cite{Bow} or Corollary 2.5.7 in \cite{Mo}).

\subsubsection{Exponential mixing}
Now we recall the exponential mixing property of $T_g$. To begin, let us recall that for a Riemannian manifold $M$, $f\in C_c^{\infty}(M,\RR)$ and $l\in \NN$, the Sobolev norm is defined in \eqref{e:sobolev}.

\begin{proposition}\label{prop-km}(Exponential mixing)
Let $G$ be a connected semisimple Lie group without compact factors, $\Gamma$ be an irreducible cocompact lattice of $G$ and $g\in G$ be  non-quasiunipotent. Then there exist $\mu, E, l, \theta > 0$ such that for any $x \in \ggm$, $\psi \in C^\infty(\ggm,\RR)$ and $f \in C^\infty_c (H^+,\RR)$ such that the map $\pi_x:g\rightarrow gx$ is injective on $\text{supp}(f)$, and for any $n\in \NN$ one has
\begin{equation*}
\left|\int_{H^+} f(h)\psi(g^nhx)d\nu_{H^+}(h)-\int_{H^+} f d\nu_{H^+}\int_{\ggm} \psi d\mu_G\right| \leq C(f,\psi)e^{-\mu n},
\end{equation*}
where $\nu_{H^+}$ is a bi-invariant Haar measure on $H^+$, $\mu_G$ is the normalized Haar measure on $\ggm$, and
\begin{equation*}
C(f,\psi)=E\|f\|_{l,2}\|\psi\|_{l,2}\left(\max_{x\in \ggm}\|\nabla \psi(x)\|\int_{H^+} |f|d\nu_{H^+}\right)^{\theta}.
\end{equation*}
\end{proposition}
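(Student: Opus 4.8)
The plan is to deduce this statement about the discrete translation $T_g$ from the existing effective equidistribution / exponential mixing results for one-parameter flows on $\ggm$, or directly from the spectral gap for the regular representation of $G$ on $L^2_0(\ggm)$. Since $G$ is semisimple without compact factors and $\Gamma$ is an irreducible cocompact lattice, the representation of $G$ on $L^2_0(\ggm) = \{\psi \in L^2(\ggm): \int \psi\, d\mu_G = 0\}$ has a spectral gap (this is where cocompactness and irreducibility enter — via property $(T)$ when $G$ has no rank-one factors, and via the uniform spectral gap of Selberg/Jacquet–Langlands type otherwise). The clean way to organize the argument is to invoke the result of \cite{GSW} (their Proposition 3.1), which is stated precisely for this discrete horospherical setting: given the spectral gap, one obtains exponential decay of the matrix coefficients $n \mapsto \int_{H^+} f(h)\psi(g^n h x)\, d\nu_{H^+}(h)$ against the product of the averages, with an error controlled by Sobolev norms of $f$ and $\psi$ together with the extra Lipschitz-type factor $\bigl(\max_x \|\nabla\psi(x)\|\int_{H^+}|f|\,d\nu_{H^+}\bigr)^\theta$.

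In more detail, the steps I would carry out are: (i) record the spectral gap: there is $\mu_0 > 0$ such that the regular representation of $G$ on $L^2_0(\ggm)$ is $\mu_0$-tempered (e.g.\ its $K$-finite matrix coefficients decay like the Harish-Chandra $\Xi$-function to the power $\mu_0$, or equivalently it is strongly $L^{p}$ for some finite $p$ depending on $\mu_0$); (ii) fix a basis of the Lie algebra and use it to build the Sobolev norms $\|\cdot\|_{l,2}$ as in \eqref{e:sobolev}, choosing $l$ large enough (depending only on $\dim G$ and $\mu_0$) that Sobolev embedding plus the spectral gap give the quantitative decay of smooth matrix coefficients $|\langle g^n \cdot \phi_1, \phi_2\rangle - \int\phi_1\int\phi_2| \le E' e^{-\mu n}\|\phi_1\|_{l,2}\|\phi_2\|_{l,2}$ for $\phi_i \in C^\infty(\ggm,\RR)$; (iii) pass from a global test function on $\ggm$ to the integral over the horospherical piece $B_r^+ x$: because $\pi_x$ is injective on $\mathrm{supp}(f)$, the function $f$ pushes to a function (a "thickened" smooth bump) supported near a small horospherical plaque, and one writes $\int_{H^+} f(h)\psi(g^n h x)\, d\nu_{H^+}(h)$ as a matrix coefficient of $g^n$ against the regularization of $f\delta_{H^+ x}$ in the transverse directions; this is exactly the "unfolding" trick, and it is where the factor $\bigl(\max_x\|\nabla\psi\|\int|f|\bigr)^\theta$ is produced — one interpolates between the rough $L^1$ estimate (using only $\|\psi\|_\infty$) and the smoothed $L^2$ estimate, optimizing the thickening parameter. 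All of steps (i)–(iii) are already packaged in \cite[Proposition 3.1]{GSW}, so the proof is essentially a citation together with a check that our normalization of $d\nu_{H^+}$ and of the Sobolev norm matches theirs.

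The main obstacle — and the only genuinely delicate point — is step (iii): controlling the transverse regularization uniformly in the base point $x \in \ggm$ and obtaining the precise shape of the error term, in particular the exponent $\theta$ and the appearance of $\max_x\|\nabla\psi(x)\|$ rather than a higher Sobolev norm of $\psi$. The interpolation must be done so that the constants $\mu, E, l, \theta$ depend only on $G$ and $\Gamma$ (through the spectral gap) and not on $\d$, $x$, $f$, $\psi$, or $n$; compactness of $\ggm$ is used to get the uniform injectivity radius $\d_0$ and the uniform constants $c_0$ from the local product structure $\psi_x: B_\d^+ \times B_\d^- \to \ggm$ recalled above. Once this is in place, no further work is needed, since the statement as written is exactly \cite[Proposition 3.1]{GSW} adapted to our notation.
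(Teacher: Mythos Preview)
Your proposal is correct and matches the paper's own treatment: the paper does not give an independent proof of Proposition~\ref{prop-km} but records it as a known result, citing \cite[Proposition 2.4.8]{KM} for the one-parameter version and \cite[Proposition 3.1]{GSW} for the spectral gap in the discrete setting, exactly as you outline. Your sketch of the mechanism (spectral gap $\Rightarrow$ decay of smooth matrix coefficients $\Rightarrow$ horospherical equidistribution via transverse thickening and interpolation) is a faithful unpacking of what those references do, so there is nothing to add.
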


In \cite[Proposition 2.4.8]{KM}, the authors obtained the same result for $(\ggm, g^t)$ where $g^t$ is a one-parameter subgroup of $G$, which is derived from the Condition (EM) \cite[Section 2.4.1]{KM}. In our discrete setting, the authors in \cite[Propositions 3.1]{GSW} provide a proof for a spectral gap result for the regular representation of $G$ on the space $L_0^2(\ggm)$, from which the Condition (EM) follows.

We point out that the coefficient $C(f,\psi)$ is slightly different from that in \eqref{e:mixing0}. Nevertheless, Lemma \ref{l-main} can be proved analogously. See also the proof of \cite[Proposition 3.2]{Kad}, \cite[Proposition 3.3]{BK} and \cite[Lemma 3.5]{GSW} where indeed $C(f,\psi)$ is used.

Thus Propositions \ref{uegnegeral} and \ref{prop-km} also hold. As a corollary, we have:
\begin{proposition}\label{special}
Let $G$ be a connected semisimple Lie group without compact factors, $\Gamma$ be an irreducible cocompact lattice of $G$ and $g\in G$ be non-quasiunipotent. Then $(\ggm,g)$ has uniform exponential growth along unstable foliation.
\end{proposition}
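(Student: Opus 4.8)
The plan is to derive Proposition~\ref{special} directly from Proposition~\ref{uegnegeral} by checking that the translation $T_g$ on $\ggm$ satisfies all the hypotheses of that proposition. Concretely, $T_g$ is a $C^\infty$ (hence $C^1$) partially hyperbolic diffeomorphism whenever $\Ad g$ is non-quasiunipotent; it is dynamically coherent because the homogeneous splitting $TM=E^s\oplus E^c\oplus E^u$ integrates to the genuine foliations coming from the subgroups $H^s$, $H^0$, $H^+$ (the center-stable foliation is the one with leaves $B^-_\d x$, as recorded in the excerpt); and it has polynomial, hence subexponential, growth in the center direction by Lemma~\ref{l-action-g}(2) together with the discussion preceding it. The only remaining point is property~(EM), and here the subtlety is that the mixing estimate available for $T_g$ is Proposition~\ref{prop-km}, whose coefficient $C(f,\psi)$ is of the form $E\|f\|_{l,2}\|\psi\|_{l,2}(\max\|\nabla\psi\|\int|f|d\nu_{H^+})^\theta$ rather than the clean product $C\|\phi\|_r\|\psi\|_r$ appearing in~\eqref{e:mixing0}.

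So the first thing I would do is explain that this discrepancy is harmless for the one use of property~(EM) in the argument, namely the effective-density Lemma~\ref{l-main}. In that proof, property~(EM) is applied to the smooth bump functions $\phi_k,\psi_k$ produced by Lemma~\ref{appr}, which by items (3)--(4) of that lemma satisfy $\|\phi_k\|_l,\|\phi_k\|_{l,2},\|\psi_k\|_l,\|\psi_k\|_{l,2}\le M_{l,d}e^{\lambda'k(d+l)}$. The factor $\max\|\nabla\psi\|$ is dominated by $\|\psi_k\|_l$ and $\int|f|d\nu_{H^+}$ by the (bounded) volume of the ambient balls, so the whole coefficient $C(\phi_k,\psi_k)$ is still bounded by a constant times $e^{\lambda'k\cdot O(l+d)}$. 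Choosing $\lambda'$ small enough relative to the mixing rate $\mu$ then yields exactly the estimate $\int\phi_k\,dm\int\psi_k\,dm^u_x-C_1e^{-\lambda'k}$ that drives the rest of the proof of Lemma~\ref{l-main}; one also replaces the Euclidean ball computation there by the homogeneous-chart bound $B(x,c_0^{-1}\d)\subset B^+_\d B^-_\d x\subset B(x,c_0\d)$ and Lemma~\ref{l-action-g}. I would state this as: ``Lemma~\ref{l-main} holds verbatim for $T_g$, with~\eqref{e:mixing0} replaced by Proposition~\ref{prop-km}, the proof being identical except for bounding the coefficient $C(f,\psi)$ as above,'' and cite the analogous arguments in \cite{Kad,BK,GSW}.

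Once Lemma~\ref{l-main} is in hand for $T_g$, the proof of Proposition~\ref{uegnegeral} goes through unchanged (it only ever uses dynamical coherence, subexponential center growth, and the conclusion of Lemma~\ref{l-main}), and it delivers precisely the assertion that $T_g$ has uniform exponential growth along the unstable foliation. That is the content of Proposition~\ref{special}, so the proof is complete.

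The main obstacle is purely bookkeeping: verifying that the weaker-looking coefficient in Proposition~\ref{prop-km}, with its $\max\|\nabla\psi\|$ and $\int|f|$ correction, still absorbs into the exponential error term after the Sobolev-norm bounds of Lemma~\ref{appr} are substituted in. There is no new idea required beyond choosing the auxiliary exponent $\lambda'$ small enough, but one must be careful that the exponent $2l+d+u$ in~\eqref{e:eff1} is replaced by the correct (slightly larger, $\theta$-dependent) exponent and that $D$ is enlarged accordingly; everything else in the chain of estimates is formally identical to the Euclidean case already written out in the proof of Lemma~\ref{l-main}.
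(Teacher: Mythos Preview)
Your proposal is correct and matches the paper's own argument essentially verbatim: the paper also observes that the coefficient $C(f,\psi)$ in Proposition~\ref{prop-km} differs from that in~\eqref{e:mixing0} but that Lemma~\ref{l-main} can be proved analogously (citing the same references \cite{Kad,BK,GSW}), whence Proposition~\ref{uegnegeral} applies and Proposition~\ref{special} follows as a corollary. Your write-up is in fact more detailed than the paper's, which dispatches the whole thing in two sentences.
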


Proposition \ref{special} can be extended to a more general setting.
\begin{proposition}\label{general}
Let $G$ be a connected semisimple Lie group, $\Gamma$ be a cocompact lattice of $G$, and $g\in G$ be non-quasiunipotent. Then $(\ggm,g)$ has uniform exponential growth along unstable foliation.
\end{proposition}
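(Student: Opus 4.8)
The plan is to deduce Proposition~\ref{general} from Proposition~\ref{special} by two successive reductions, peeling off first the compact factors of $G$ and then the failure of $\Gamma$ to be irreducible, and checking at each stage that the property of Definition~\ref{ueg} is inherited by $(\ggm,T_g)$ from the reduced system. The structural fact that keeps the reductions honest is that $\Ad g$ preserves each simple ideal of $\fg$, so that the unstable horospherical $H^+$ — hence the whole unstable foliation of $T_g$ and the entropy $h=h^u_\top(T_g)=h_\top(T_g)=\sum_{|\lambda|>1}\log|\lambda|$ (by Proposition~\ref{unstable} and the entropy formula recalled above) — is governed only by the eigenvalues of $\Ad g$ of modulus $>1$.

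\emph{Step 1 (compact factors).} Write $\fg=\fg_{nc}\oplus\fg_c$, with $\fg_c$ the sum of the compact simple ideals, and let $G_{nc},G_c$ be the corresponding closed normal analytic subgroups, so $G_c$ is compact and $G=G_{nc}G_c$. Since $\Ad g$ normalizes the ideal $\fg_c$ and preserves the Killing form, $\Ad g|_{\fg_c}$ lies in the compact orthogonal group of the (negative definite) Killing form of $\fg_c$, hence all its eigenvalues have modulus $1$; therefore $\fh^+_\CC=\bigoplus_{|\lambda|>1}E_\lambda\subseteq(\fg_{nc})_\CC$ and $H^+\subseteq G_{nc}$. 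Set $\bar G=G/G_c$ (connected semisimple, no compact factors), $p:G\to\bar G$, and $\bar\Gamma=p(\Gamma)$; a short argument using compactness of $G_c$ together with discreteness and cocompactness of $\Gamma$ shows $\bar\Gamma$ is again a cocompact lattice, and $\bar g:=p(g)$ is non-quasiunipotent with the same $H^+$. The induced map $q:\ggm\to\bar G/\bar\Gamma$ is a fibre bundle with compact fibres, intertwines $T_g$ and $T_{\bar g}$, and — because $H^+\subseteq G_{nc}$ — restricts to a local isometry from $W^u_g(x,\d)$ onto $W^u_{\bar g}(q(x),\d)$ for small $\d$; the two systems have the same unstable (equivalently topological) entropy. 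Hence uniform exponential growth for $(\bar G/\bar\Gamma,T_{\bar g})$ passes up to $(\ggm,T_g)$: push $x$ down to $q(x)$, take the disjoint sets $W^u_{\bar g}(\bar z_i,2\d)$ inside $T_{\bar g}^{N}W^u_{\bar g}(q(x),\d)$, and lift each through $q$ to a $W^u_g(z_i,2\d)$ inside $T_g^{N}W^u_g(x,\d)$; disjointness is preserved since the $q$-images are disjoint.

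\emph{Step 2 (irreducibility) and conclusion.} Now $\bar G$ has no compact factors. Using the structure theory of lattices in semisimple Lie groups, after replacing $\bar\Gamma$ by a finite-index subgroup — equivalently passing to a finite cover of $\bar G/\bar\Gamma$, which changes neither $h$ nor the validity of the property in Definition~\ref{ueg} (a degree-$m$ cover can merge at most $m$ of the disjoint sets, and this loss is absorbed by iterating $N$ as in the proof of Theorem~A) — and after quotienting by a central subgroup contained in the lattice (which leaves the homogeneous space literally unchanged), we may assume $\bar G=\prod_i\bar G_i$ is a genuine direct product of connected semisimple groups without compact factors with $\bar\Gamma=\prod_i\bar\Gamma_i$, each $\bar\Gamma_i$ an irreducible cocompact lattice in $\bar G_i$. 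Then $\bar g=(\bar g_i)$ and $(\bar G/\bar\Gamma,T_{\bar g})=\prod_i(\bar G_i/\bar\Gamma_i,T_{\bar g_i})$. On each factor with $\bar g_i$ non-quasiunipotent, Proposition~\ref{special} yields uniform exponential growth with entropy $h_i$; on each factor with $\bar g_i$ quasiunipotent the unstable leaf is a point and the property holds trivially with $h_i=0$; and $\sum_i h_i=h$. Finally, Definition~\ref{ueg} is multiplicative over direct products — $W^u$ of a product is the product of the $W^u$'s, the entropies add, and the disjoint families multiply — so the property holds for $\prod_i(\bar G_i/\bar\Gamma_i,T_{\bar g_i})$, hence for $\bar G/\bar\Gamma$, hence by Step 1 for $\ggm$.

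The dynamical content is already in Proposition~\ref{special}; the work here lies entirely in the reductions, and the main obstacle is to verify cleanly that the somewhat delicate condition of Definition~\ref{ueg} — with its bookkeeping of the radii $\d$ versus $2\d$ — is stable under the three operations used: compact isometric extensions, finite covers, and direct products. Each is geometrically transparent once one works below the relevant injectivity and local-product radii, but carrying this out rigorously, together with the routine if fiddly structure-theoretic manipulations needed to render the center of $G$ and the non-splitting of $\bar G$ harmless, is where the care must go.
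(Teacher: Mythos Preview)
Your proposal is correct and follows essentially the same route as the paper: reduce to Proposition~\ref{special} via the structure theory of semisimple groups and lattices, checking that Definition~\ref{ueg} survives direct products, finite covers, and fibre-bundle extensions along which $H^+$ lies entirely in the base. The organizational differences are minor --- the paper passes to the simply connected cover first (so that $G$ genuinely splits) and then groups the quasiunipotent factors together with the compact factor into the fibre $G''$, whereas you quotient out $G_c$ first and treat quasiunipotent factors as trivial summands in the product; both arrive at the same reduction.
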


\begin{proof}
Let $\widetilde{G}$ be the simply connected cover of $G$, $\widetilde{\Gamma}$ be the inverse image of $\Gamma$ in $\widetilde{G}$ and $\widetilde{g}$ be any inverse of $g$ in $\widetilde{G}$. Then it is easily seen that there is an isomorphism of systems
$$(\widetilde{G}/\widetilde{\Gamma},\widetilde{g})\cong (\ggm,g).$$
Hence we may replace $G$ by its simply connected cover $\widetilde{G}$ and $\Gamma$ by its inverse image in  $\widetilde{G}$.
Write $$G=G^{nc}\times K, \text{ with $K$ compact and $G^{nc}$ has no compact factors. }$$
Then $q(\Gamma)$ is a lattice in $G^{nc}$, where $q$ denotes the projection from $G$ to $G^{nc}$. According to \cite[Theorem 5.22]{Rag}, $G^{nc}$ can be decomposed as
$$G^{nc}=G_1\times\cdots\times G_m$$
such that $\Gamma\cap G_i$ are irreducible lattices in $G_i (1\le i\le m)$.
Write $g=(g_1,\ldots,g_m,k)$ with $g_i\in G_i$ and $k\in K$. We may assume that $g_i$ is non-quasiunipotent for $1\le i \le j$ and $g_i$ is quasiunipotent for $j<i \le m$. Then set $$G'=G_1\times\cdots\times G_j, \quad G''=G_{j+1}\cdots\times G_m\times K.$$
By assumption $G'$ is nontrivial and we write $g=(g',g'')$ with $g'\in G'$ non-quasiunipotent and $g''\in G''$ quasiunipotent.
Denote the projection of $G$ to $G'$ as $p$. Using \cite[Theorem 5.22]{Rag} again, we obtain that the group $p(\Gamma)$ is commensurable with $\prod_{1\le i\le j}(\Gamma\cap G_i)$, hence is a lattice in $G'$. Thus we know that $\ggm$ is a fiber bundle over $G'/p(\Gamma)$ with fiber $G''/(\Gamma\cap G'')$.

Firstly, we study the system $(G'/p(\Gamma),g')$. By definition, there is a finite index subgroup $\Gamma'\subset p(\Gamma)$ such that $\Gamma'=\prod_{1\le i\le j}\Gamma_i$ with $\Gamma_i$ irreducible lattices in $G_i$. By Proposition \ref{special}, each $(G_i/\Gamma_i,g_i)$ has uniform exponential growth along unstable foliation. Then $(G'/\Gamma',g',d')$ is a product of systems $(G_i/\Gamma_i,g_i,d_i)$ where it is more convenient to work with the metrics
$$d'((x_i),(y_i))=\max_{1\le i\le j} d_i(x_i,y_i).$$
Note that $h_{\top}(g')=\sum_{1\le i\le j}h_{\top}(g_i)$. It is not hard to verify that $G'/\Gamma'$ has uniform exponential growth along unstable foliation by choosing
$$N(\rho,\d)=\max_{1\le i \le j} N_i(\rho,\d).$$
Now taking into account that there exists a locally isometric finite covering map $\varphi: G'/\Gamma'\to G'/p(\Gamma)$, we see that  $G'/p(\Gamma)$ has uniform exponential growth along unstable foliation.

Now consider the fiber system $(\ggm, g)$ over $(G'/p(\Gamma),g')$ with fiber $G''/(\Gamma\cap G'')$. Note that $h_{\top}(g)=h_{\top}(g')$ and the Lyapunov exponents of $g$ along the direction of fibers are zero. Notice that the unstable horospherical subgroup $H^+$ of $g$ is a subgroup of $G'$. If $x=h\Gamma=h'h''\Gamma\in \ggm$ with $h'\in G'$ and $h''\in G''$, then for $B^+_\d\subset H^+$,
$$p(B_\d^+h\Gamma)=B_\d^+p(h\Gamma)=B_\d^+h'p(\Gamma).$$
So $p$ is in fact a local isometry between $W^u_g(x)\to W^u_{g'}(p(x))$. As $p\circ T_g=T_{g'}\circ p$, it is clear that $(\ggm, g)$ has uniform exponential growth along unstable foliation if $(G'/p(\Gamma),g')$ has.
\end{proof}

\begin{proof}[Proof of Theorem D]
If $g$ is quasiunipotent, the topological entropy of $T_g$ is $0$, then there is nothing to prove.

Assume that $g$ is non-quasiunipotent. Then by Proposition \ref{general}, $(\ggm,g)$ has uniform exponential growth along unstable foliation. Moreover, it has subexponential growth in the center direction. Now Theorem D is an immediate corollary of Theorem A(2).
\end{proof}

\subsubsection{An improvement of Theorem D}
\begin{TheoremD'}
Let $G$ be a connected semisimple Lie group, $\Gamma$ be a cocompact lattice of $G$, and $g\in G$. Then the topological entropy function $h_{\top}: C^1(\ggm)\to \RR^+\cup\{0\}$ is continuous at $T_g$.
\end{TheoremD'}

\begin{proof}
Let $\e>0$ and $f$ be $C^1$-close enough to $T_g$ which will be specified later. By Ruelle inequality, for any $f$-ergodic measure $\nu$ and any $N\in \NN$,
\begin{equation}\label{e:rue}
\begin{aligned}
h_\nu(f)&\le \sum_{\lambda_j(\nu)>0}\lambda_j(\nu)m_j(\nu)\\
&\le\dim E_f^c \cdot \frac{1}{N}\max_{x\in \ggm} \log\|D_xf^N|_{E_f^c(x)}\|+\max_{x\in \ggm} \log\det (D_xf|_{E_f^u(x)}).
\end{aligned}
\end{equation}
Note that $\|D_xT_g^N|_{E_g^c(x)}\|$ is constant in $x\in \ggm$. Choose $N$ large enough such that $\frac{1}{N}\log \|D_xT_g^N|_{E_g^c(x)}\|< \frac{\e}{3\dim E_g^c}$ for any $x\in \ggm$. Now choose $f$ $C^1$-close enough to $T_g$ such that for any $x\in \ggm$,
\begin{equation}\label{e:close1}
\Big| \log \|D_xf^N|_{E_f^c(x)}\|- \log\|D_xT_g^N|_{E_g^c(x)}\|\Big|< \frac{\e}{3\dim E_f^c}
\end{equation}
and
\begin{equation}\label{e:close2}
\Big| \log \det(D_xf|_{E_f^u(x)})-\log \det(D_xT_g|_{E_g^u(x)})\Big|< \frac{\e}{3}.
\end{equation}
Note that
\begin{equation}\label{e:close3}
h_\top(T_g)=\log\det (\Ad g|_{\fh^+})=\log\det(D_xT_g|_{E_g^u(x)})
\end{equation}
for any $x\in \ggm$. Combining \eqref{e:rue}, \eqref{e:close1}, \eqref{e:close2} and \eqref{e:close3},
we have $h_\nu(f)< h_\top(T_g)+\e$. Then $h_\top(f)=\sup_{\nu\in \M_f^e(\ggm)}h_\nu(f)\le h_\top(T_g)+\e$. This proves the upper semicontinuity of $h_{\top}: C^1(\ggm)\to \RR^+\cup\{0\}$ at $T_g$.

The lower semicontinuity of $h_{\top}: C^1(\ggm)\to \RR^+\cup\{0\}$ at $T_g$ is obtained in Theorem D. We give a simple proof when the system after perturbation is $C^r, r>1$. For any $\e>0$, choose a $C^r,r>1$ diffeomorphism $f:\ggm\to \ggm$ which is $C^1$-close enough to $T_g$ such that for any $x\in \ggm$ \eqref{e:close2} is satisfied. As $f$ is $C^r,r>1$, let $\mu$ be a Gibbs u-state for $f$, i.e., an $f$-invariant measure whose conditional measures along unstable leaves are absolutely continuous with respect to the Lebesgue measure on the unstable leaves. The existence of $\mu$ is guaranteed by \cite{PS0}. Then we have
\begin{equation}\label{e:entropyesti}
\begin{aligned}
&h_\top(f)\ge h_\top^u(f)\geq h^u_\mu(f)
\\=&\int_{\ggm} \log \det(D_xf|_{E_f^u(x)})d\mu(x)\ge \log \det(DT_g|_{E_g^u})-\e/3\\
=&h_\top(T_g)-\e/3.
\end{aligned}
\end{equation}
This proves the lower semicontinuity of $h_{\top}: C^r(\ggm)\to \RR^+\cup\{0\}$ at $T_g$, $r>1$.
\end{proof}
\begin{remark}
\eqref{e:close3} is crucial for the above argument to give upper semicontinuity. In general, PHDs considered in Theorem C do not necessarily satisfy \eqref{e:close3}. As far as we know, a Gibbs u-state satisfying the entropy estimate in \eqref{e:entropyesti} exists only for $C^r,r>1$ PHDs.
\end{remark}
\subsubsection{Further questions}
Though the most work in this paper is done on the unstable foliation of $T_g$, we remark that the center distribution of $T_g$ corresponding to the subalgebra $\fh^0:=\bigoplus_{|\lambda|= 1} E_\lambda\cap \fg$, is $C^\infty$. Then the center foliation is $C^\infty$ and hence plaque expansive (cf. \cite{HPS}). Thus there exists a neighborhood $\mathcal U$ of $T_g$ in the $C^1$ topology, such that for each $f\in \mathcal U$,
\begin{enumerate}
  \item there exists a homeomorphism $h_f: \ggm \to \ggm$, called a leaf conjugacy, satisfying $h_f(W^c_g(x))=W^c_f(h_f(x))$ and $h_f\circ T_g(W^c_g(x))=f\circ h_f(W^c_g(x))$. Moreover, the leaf conjugacy $h_f$ is close to the identity map in the $C^0$ topology.
  \item $f$ is dynamically coherent, i.e., there exist $f$-invariant foliations $W^{cs}_f$ and $W^{cu}_f$ tangent to $E^c_f\oplus E^s_f$ and $E^c_f\oplus E^u_f$ respectively; by intersecting their leaves we then obtain an $f$-invariant foliation $W^c_f$.
\end{enumerate}
A further question arises about the continuity property of the topological entropy at each $f\in \mathcal U$. Clearly, the properties of $f$ listed above will be fundamental. However, other properties of $f$ such as property (EM) should be developed, for which we have no explicit ideas now.

On the other hand, one can study a smooth time change of a partially hyperbolic homogeneous flow.
Let $(\ggm,g_t)$ be a partially hyperbolic homogeneous flow, i.e., $g_t$ is a one-parameter subgroup of $G$ for which each element is non-quasiunipotent.
Let $\tau: \ggm\to  \RR$ be a smooth positive function. The time-changed flow is defined as
$g^\tau_t(x):=g_{\a(x,t)}(x)$,
where $\a(x,t)$ is determined by
$$ \int_0^{\a(x,t)}\tau(g_s(x))ds=t.$$
Then the time-one map $g^\tau_1$ is also a $C^\infty$ PHD whose center, center-stable and center-unstable foliations all coincide with those of $T_g$. However, we do not know if $g^\tau_1$ has property (EM).

\subsection{Toral automorphisms}
Suppose that a toral automorphism $T_A:\mathbb{T}^d\to \mathbb{T}^d$ is induced by a linear map $A:\mathbb{R}^d\to \mathbb{R}^d$. For $\lambda \in \mathbb{C}$, set
$$E_\lambda:=\{v\in \CC^d: (A-\lambda I)^jv=0 \text{\ for some\ } j\in \mathbb{N}-\{0\}\}.$$
The so-called stable, center and unstable subspaces $E^u,E^c,E^s$ are the intersections with $\RR^d$ of
\[E^u_\CC:=\bigoplus_{|\lambda|>1} E_\lambda, \quad E^c_\CC:=\bigoplus_{|\lambda|= 1} E_\lambda, \quad E^s_\CC:=\bigoplus_{|\lambda|<1} E_\lambda\]
respectively. Then $\mathbb{R}^d=E^u\oplus E^c\oplus E^s$. Denote $E^{sc}=E^s\oplus E^c$. We can use the supremum norm $\|\cdot\|_\infty$ (denoted by $\|\cdot\|$ for short) on $\mathbb{R}^d$ defined by $\|v\|:=\max\{\|v^u\|, \|v^c\|, \|v^s\|\}$ where $v=v^u+v^c+v^s, v^*\in E^*, *\in \{s,c,u\}$. Denote $B^*(\e):=\{v\in E^*: \|v\|<\e\}, *\in \{s,c,u\}$.

Assume that $T_A:\mathbb{T}^d\to \mathbb{T}^d$ is ergodic. It is enough to consider the nontrivial case $E^u\neq \emptyset$. The following lemma is well known.
\begin{lemma}\label{dense}(Effective density, see Lemma 1 in \cite{Mar})
Let $\tau>1$. Then for $n\in \NN$ large enough, $B^u(\tau^n)$ is $(1/n^d)$-dense in $\mathbb{T}^d$.
\end{lemma}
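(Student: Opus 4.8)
\emph{Proof proposal.} The plan is to upgrade the (qualitative) density of $\pi(E^u)$ in $\mathbb{T}^d$ — where $\pi:\mathbb{R}^d\to\mathbb{T}^d$ is the canonical projection — to a \emph{polynomial} rate of effective density, and then to observe that a polynomial rate is far more than enough to beat $1/n^d$ once one substitutes $R=\tau^n$. Concretely, it suffices to produce constants $C\ge 1$ and $\alpha>0$, depending only on $A$, such that $\pi(B^u(R))$ is $CR^{-\alpha}$-dense in $\mathbb{T}^d$ for every $R\ge 1$. Granting this, take $R=\tau^n$: the density modulus becomes $C\tau^{-\alpha n}$, and $C\tau^{-\alpha n}<n^{-d}$ holds for all large $n$ since $\alpha n\log\tau$ dominates $d\log n+\log C$. (As $\dim E^u<\infty$, the set $B^u(\tau^n)=\{v\in E^u:\|v\|<\tau^n\}$ is comparable to a ball of $E^u$ of radius of order $\tau^n$, so the normalisations cause no loss.)

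I would first dispose of the qualitative density. The closure $\overline{\pi(E^u)}$ is a closed connected subgroup of $\mathbb{T}^d$, hence a subtorus $\pi(W)$ for an $A$-invariant rational subspace $W\supseteq E^u$, and $W$ is the smallest rational subspace containing $E^u$. If $W\ne\mathbb{R}^d$, the automorphism induced by $A$ on $\mathbb{R}^d/W$ has an integer characteristic polynomial, all of whose roots are eigenvalues of $A$ of modulus $\le 1$ (the expanding part $E^u$ is absorbed into $W$), while the product of those moduli equals $|\det A|=1$; hence every such root has modulus exactly $1$ and, by Kronecker's theorem, is a root of unity — contradicting ergodicity of $T_A$. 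Therefore $W=\mathbb{R}^d$, so $\pi(E^u)$ is dense; in particular $E^u$ is contained in no proper rational subspace, and consequently $(E^u)^\perp$ contains no nonzero integer vector.

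The quantitative step is the crux, and I expect it to be essentially the only real work. I would run a transference argument via Fourier analysis on $\mathbb{T}^d$: if $\mu_R$ denotes the normalised push-forward to $\mathbb{T}^d$ of Lebesgue measure on $B^u(R)$, then $\widehat{\mu_R}(m)=\widehat{\nu}\bigl(R\,\mathrm{proj}_{E^u}(m)\bigr)$, where $\nu$ is the normalised Lebesgue measure on the unit ball of $E^u$ and $\widehat\nu$ decays polynomially at infinity. Comparing the $m=0$ term of $\widehat{\mu_R}$ against the tail of the series shows that a failure of $\delta$-density of $\pi(B^u(R))$ forces a nonzero $m\in\mathbb{Z}^d$ with $\|m\|\le C\delta^{-1}$ and $\|\mathrm{proj}_{E^u}(m)\|=\dist\bigl(m,(E^u)^\perp\bigr)\le C\delta^{-C}R^{-1}$. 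On the other hand, $E^u$ is an algebraic subspace of bounded height — it is spanned by eigenvectors of the integer matrix $A$, whose entries lie in the number field generated by the eigenvalues of $A$ — so Liouville's inequality applied to the algebraic equations cutting out $(E^u)^\perp$ yields a lower bound $\dist\bigl(m,(E^u)^\perp\bigr)\ge c\,\|m\|^{-\alpha_0}$ for all nonzero $m\in\mathbb{Z}^d$, with $c>0$ and $\alpha_0$ depending only on $d$ and $A$. Combining the two inequalities gives $\delta\le C'R^{-1/(\alpha_0+C)}$, which is the desired polynomial estimate. The delicate points are tracking the polynomial losses honestly through the transference and extracting the Liouville bound cleanly from the algebraicity of $E^u$; alternatively one simply quotes \cite[Lemma 1]{Mar} for the full quantitative statement. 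Everything else is soft.
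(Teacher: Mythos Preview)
The paper does not prove this lemma at all; it simply records the citation to \cite[Lemma~1]{Mar} in the statement and moves on. Your proposal therefore goes well beyond what the paper offers: you supply an actual argument, and the overall strategy---prove a polynomial effective-density rate $CR^{-\alpha}$ for $\pi(B^u(R))$ and then substitute $R=\tau^n$---is correct and in fact yields far more than the $1/n^d$ rate required (even $C/(\log R)^d$ would suffice after the substitution).

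Two minor comments on the details. First, in the qualitative step, the product of the moduli of the eigenvalues of $A$ on $\mathbb{R}^d/W$ equals $|\det(A|_{\mathbb{R}^d/W})|$, which is $1$ because $A$ induces an automorphism of the lattice $\mathbb{Z}^d/(W\cap\mathbb{Z}^d)$; this is not literally $|\det A|$, though the conclusion is the same. Second, the Liouville lower bound $\dist(m,(E^u)^\perp)\ge c\|m\|^{-\alpha_0}$ is indeed the heart of the quantitative step, and it does hold: the orthogonal projection $P$ onto $E^u$ has entries in the splitting field $K$ of the characteristic polynomial of $A$, so for nonzero $m\in\mathbb{Z}^d$ some coordinate $(Pm)_i=\sum_j P_{ij}m_j$ is a nonzero element of $K$, and taking the norm $N_{K/\mathbb{Q}}$ together with the trivial bound $|\sigma((Pm)_i)|\le C\|m\|$ on the conjugates gives $|(Pm)_i|\ge c\|m\|^{-([K:\mathbb{Q}]-1)}$. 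You rightly flag this as the delicate point; it is correct, but it deserves those two lines rather than a bare appeal to ``Liouville's inequality.'' With that in place your proof is complete and self-contained, whereas the paper is content to outsource the entire lemma to Marcus.
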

%Let us prove that $T_A$ has uniform exponential growth along unstable foliation. Let $\d_0>0$ be small enough and $h=h_\top(T_A)$. For any $\rho>0$, we can find $x_0\in \mathbb{T}^d$ and $0<\d<\d_0$ small enough such that
%$$h_{\top}(T_A, \overline{W^u_{T_A}(x_0,\d)})>h-\rho.$$
%Pick any $0<\e\ll \d$. It follows that there exists $N_1\in \NN$ such that if $n\ge N_1$, then
%there exists an $(n,\e)$ u-separated subset $S$ of $\overline{W^u_{T_A}(x_0,\d)}$ such that $\#S=N^u(g,n,\e, x_0,\d)>e^{n(h-2\rho)}$.
%Then Note that $A^c(n,\d)\supset B^c(\frac{\d}{n^{d-2}})$.
It then follows form Lemma \ref{dense} that there exists $N_1\in \NN$ such that for any $n\ge N_1$, we have an integer function $M=M_\d(n)$ such that for any $x,x_0\in \mathbb{T}^d$,
$$T_A^M(x+B^u(\d/2))\cap \Big(x_0+B(\frac{\d}{2n^{d-2}})\Big) \neq \emptyset$$
and $\lim_{n\to \infty}\frac{M_\d(n)}{n}=0$. Then $T_A^M (x+B^u(\d))$ contains a local unstable leaf $W^u_{T_A}(y,2\d)$ for some $y\in W_{T_A}^{cs}(x_0,\frac{\d}{2n^{d-2}})$. We note that
 \begin{itemize}
   \item  $d(T_A^nx,T_A^ny)\le \d$ if $y\in W_{T_A}^{cs}(x,\frac{\d}{n^{d-2}})$,
   \item  $W^{cs}$ is a $C^\infty$ foliation,
   \item  $\lim_{n\to \infty}\frac{M_\d(n)}{n}=0$.
 \end{itemize}
The proof of Proposition \ref{uegnegeral} still works here with minor modifications and thus Theorem E also follows from Theorem A.
%Then take $N_3\in \NN$ large enough such that for any $x\in M$,
%$$T_A^n(\overline{W^u_{T_A}(x,\e/4)})\supset \overline{W^u_{T_A}(T_A^n(x),2\d)}$$
%for any $n\ge N_3$.a
%Finally, we take $n_\rho\in \NN$ large enough such that $n_\rho>\max\{N_1, N_2, N_3\}$ and
%$$\frac{n_\rho}{M_\d(n_\rho)+n_\rho+N_3}>\frac{h-3\rho}{h-2\rho}.$$
%Then following the three steps in the proof of Proposition \ref{special}, we can prove that $f^{N(\rho,\d)}W^u_g(x,\d)$ where $N(\rho,\d)=M_\d(n_\rho)+n_\rho+N_3$ contains at least $e^{N(\rho,\d)(h-3\rho)}$ disjoint sets of the form $\{W^u_g(w_i,2\d): w_i\in \mathbb{T}^d\}.$ This proves Theorem C.

We remark that Theorem E can be improved as follows.
\begin{TheoremE'}\label{constant}
Suppose that $T_A:\mathbb{T}^d\to \mathbb{T}^d$ is ergodic with $E^u\neq \emptyset$. Then $h_\top^u: C^1(\mathbb{T}^d)\to \RR^+\cup\{0\}$ is locally constant at $T_A$, and $h_\top: C^1(\mathbb{T}^d)\to \RR^+\cup\{0\}$ is continuous at $T_A$.
\end{TheoremE'}

\begin{proof}
The first part of Theorem E' can be derived from the proof of Theorem 1.1 in \cite{HSX}. The authors show that the unstable foliation of $T_A$ stably carries a unique non-trivial homology and hence obtain the above result, taking into account the fact $\chi_u(T_A)=h_\top^u(T_A)$ by \cite{HHW}. The second part of Theorem E' can be proved similarly as in the proof of Theorem D' in Section 5.2.3.
\end{proof}

\begin{remark}
The technique in \cite{HSX} is inapplicable to homogeneous translations. Indeed, let $g^t, t\in \RR$ be a one-parameter subgroup of $G$ with $g^1=g$, then $T_g$ is isotopic to the identity map $T_{g^0}=T_e$ where $e$ is the identity of the group $G$. So neither the stable foliation nor the unstable foliation of $T_g$ carries a non-trivial homology.
\end{remark}

\subsection{Concluding remarks}

In \cite{SY}, the authors conjectured the topological entropy is continuous on the space of $C^1$ PHDs with $1$-D center. They obtain the following list of PHDs at which the topological entropy function is continuous in $C^1$ topology (except that item (7) is new).
\begin{enumerate}
  \item uniformly hyperbolic diffeomorphisms,
\item skew products over uniformly hyperbolic diffeomorphisms, with the fiber being a circle,
and perturbations (\cite{BW, Pa, Ha, HP}),
\item derived from Anosov diffeomorphisms (\cite{Ma, HP, Po}),
\item non-dynamically coherent examples of Hertz-Hertz-Ures (\cite{RRU1}),
\item time-one maps of Anosov flows and perturbations (\cite{Hua, HZ, SY}),
\item new examples by Bonatti-Parwani-Potrie (\cite{BPP}) and Bonatti-Gogolev-Potrie (\cite{BGP}),
\item PHDs with uniformly compact center foliation of dimension one (\cite{WWZ}).
  \end{enumerate}

In this paper, we have investigated PHDs with higher dimensional center. Now we can add to the above list the following PHDs at which the topological entropy function $h_{\top}:C^1(M)\to \RR^+\cup\{0\}$ and the unstable topological entropy function $h^u_{\top}:C^1(M)\to \RR^+\cup\{0\}$ are both lower semicontinuous:
\begin{enumerate}[resume]
  \item topologically transitive PHDs with bounded expansion in the center direction (including time-one maps of frame flows; semisimple translations on homogenous spaces; time-one maps of geodesic flows in symmetric spaces of noncompact type, etc.),
\item dynamically coherent PHDs which have subexponential growth in the center direction and satisfy property (EM) (including translations on homogenous spaces),
    \item ergodic toral automorphisms (see also \cite{HSX}).
  \end{enumerate}
We also proved that the topological entropy function $h_{\top}:C^\infty(M)\to \RR^+\cup\{0\}$ is continuous at these PHDs (if we assume PHDs in items (8) and (9) are $C^\infty$). The unstable topological entropy function $h^u_{\top}:C^1(M)\to \RR^+\cup\{0\}$ is always upper semicontinuous. The topological entropy function $h_{\top}:C^1(M)\to \RR^+\cup\{0\}$ is continuous at item (10) and translations on homogenous spaces in item (9).

\ \
\\[-2mm]
\textbf{Acknowledgement.} This work is supported by NSFC Nos. 12071474 and 11701559. The author would like to thank the referee for many helpful suggestions.

\end{document}